\documentclass[a4paper,12pt,oneside]{amsart} 
\usepackage{amsmath,amsfonts,amssymb,amsthm}
\usepackage{epsfig}
\usepackage{enumerate}

\theoremstyle{plain}
\newtheorem{theorem}{Theorem}[section]

\newtheorem{lemma}[theorem]{Lemma}
\newtheorem{proposition}[theorem]{Proposition}

\theoremstyle{definition}
\newtheorem{definition}[theorem]{Definition}

\newtheorem{example}[theorem]{Example}
\newtheorem{remark}[theorem]{Remark}

\numberwithin{equation}{section}

\newcommand{\R}{{\mathbb R}}
\newcommand{\N}{{\mathbb N}}

\providecommand{\vint}[1]{\mathchoice
          {\mathop{\vrule width 5pt height 3 pt depth -2.5pt
                  \kern -9pt \kern 1pt\intop}\nolimits_{\kern -5pt{#1}}}
          {\mathop{\vrule width 5pt height 3 pt depth -2.6pt
                  \kern -6pt \intop}\nolimits_{\kern -3pt{#1}}}
          {\mathop{\vrule width 5pt height 3 pt depth -2.6pt
                  \kern -6pt \intop}\nolimits_{\kern -3pt{#1}}}
          {\mathop{\vrule width 5pt height 3 pt depth -2.6pt
                  \kern -6pt \intop}\nolimits_{\kern -3pt{#1}}}}

\newcommand{\eps}{\varepsilon}
\newcommand{\loc}{\mathrm{loc}}

\newcommand{\BV}{\mathrm{BV}}
\newcommand{\liploc}{\mathrm{Lip}_{\mathrm{loc}}}
\newcommand{\mF}{\mathcal F}

\DeclareMathOperator{\dist}{dist}

\DeclareMathOperator{\Lip}{Lip}

\DeclareMathOperator{\supp}{supp}
\DeclareMathOperator*{\aplim}{\mathrm{ap\,lim}}

\begin{document}
\title[Integral representation for functionals]{Relaxation and integral representation for functionals of linear growth on metric measure spaces}
\author{Heikki Hakkarainen, Juha Kinnunen, Panu Lahti, Pekka Lehtel\"a}
\thanks{The research was supported by the Academy of Finland and the Finnish Academy of Science and Letters, Vilho, Yrj\"o and Kalle V\"ais\"al\"a Foundation. Part of the work was done during a visit at the Institut Mittag-Leffler (Djursholm, Sweden).}
\subjclass[2010]{49Q20, 30L99, 26B30}

\begin{abstract} 
This article studies an integral representation of functionals of linear growth on metric measure spaces with a doubling measure and a Poincar\'e inequality. 
Such a functional is defined through relaxation, and it defines a Radon measure on the space.
For the singular part of the functional, we get the expected integral representation with respect to the variation measure.
A new feature is that in the representation for the absolutely continuous part, a constant appears already in the weighted Euclidean case.
As an application we show that in a variational minimization problem related to the functional, boundary values can be presented as a penalty term.
\end{abstract}

\maketitle

\section{Introduction}
Let $f:\R_+\to\R_+$ be a convex, nondecreasing function that satisfies the linear growth condition
\[
mt\le f(t)\le M(1+t)
\]
with some constants $0<m\le M<\infty$.
Let $\Omega$ be an open set on a metric measure space $(X,d,\mu)$. Throughout the work we assume that the measure is doubling and that the space supports a Poincar\'e inequality. For $u\in L^1_{\loc}(\Omega)$, we define the functional of linear growth via relaxation by
\begin{align*}
&\mathcal F(u,\Omega)\\
&\quad=\inf\left\{\liminf_{i\to\infty}\int_{\Omega}f(g_{u_i})\,d\mu:\,u_i\in \liploc(\Omega),\,u_i\to u\text{ in }L^1_{\loc}(\Omega)\right\},
\end{align*}
where $g_{u_i}$ is the minimal 1-weak upper gradient of $u_i$.
For $f(t)=t$, this is the definition of functions of bounded variation, or $\BV$ functions, on metric measure spaces, see \cite{A2}, \cite{AMP} and \cite{M}.
For $f(t)=\sqrt{1+t^2}$, we get the generalized surface area functional, see \cite{HKL}.  
Our first result shows that if $\mF(u,\Omega)<\infty$, then $\mF(u,\cdot)$ is a Borel regular outer measure on $\Omega$.
This result is a generalization of \cite[Theorem 3.4]{M}.
For corresponding results in the Euclidean case with the Lebesgue measure, we refer to \cite{AmbFP00}, \cite{ButGH98}, \cite{GiaMS98I}, \cite{GiaMS98II}, and \cite{GiaMS79}.

Our second goal is study whether the relaxed functional $\mathcal F(u,\cdot)$ can be represented as an integral. 
To this end, let $u\in L_{\loc}^1(\Omega)$ with $\mF(u,\Omega)<\infty$. Then the growth condition implies that $u\in\BV(\Omega)$.
We denote the  decomposition of the variation measure $\Vert Du\Vert$  into the absolute continuous and singular parts
by $d\Vert Du\Vert=a\, d\mu+d\Vert Du \Vert ^s$, where $a \in L^1(\Omega)$. 
Similarly, we denote by  $\mF^a(u,\cdot)$ and $\mF^s(u,\cdot)$ the absolutely continuous and singular parts of $\mF(u,\cdot)$ with respect to $\mu$.
For the singular part, we obtain the integral representation
\[
\mF^s(u,\Omega)=f_{\infty}\Vert Du\Vert^s(\Omega),
\]
where  $f_{\infty}=\lim_{t\to\infty}f(t)/t$.  This is analogous to the Euclidean case.
However, for the absolutely continuous part we only get an integral representation up to a constant
\[
\int_{\Omega}f(a)\,d\mu \le \mF^a(u,\Omega)\le \int_{\Omega}f(Ca)\,d\mu,
\]
where $C$ depends on the doubling constant of the measure and the constants in the Poincar\'e inequality.
Furthermore, we give a counterexample which shows that the constant cannot be dismissed. 
We observe that working in the general metric context produces significant challenges that are already visible in the Euclidean setting with a weighted Lebesgue measure.
In overcoming these challenges, a key technical tool is an equi-integrability result for the discrete convolution of a measure. 
As a by-product of our analysis, we are able to show that a $\BV$  function is actually a Newton-Sobolev function in a set where the variation measure is absolutely continuous.

As an application of the integral representation, we consider a minimization problem related to functionals of linear growth.
First we define the concept of boundary values of $\BV$ functions, which is a delicate issue already in the Euclidean case.
Let $\Omega\Subset\Omega^*$ be bounded open subsets of $X$, and assume that $h\in\BV(\Omega^*)$. 
We define $\BV_{h}(\Omega)$ as the
space of functions $u\in\BV(\Omega^*)$ such that $u=h$ $\mu$-almost everywhere in $\Omega^*\setminus\Omega$.
A function $u\in \BV_{h}(\Omega)$  is a minimizer of the functional of linear growth
with boundary values $h$, if
\[
\mathcal F(u,\Omega^*)= \inf\mathcal F(v,\Omega^*),
\]
where the infimum is taken over all $v\in\BV_h(\Omega)$. It was shown in \cite{HKL} that this problem always has a solution.
By using the integral representation, we can express the boundary values as a penalty term. More precisely, under suitable conditions on the space and $\Omega$, we establish equivalence between the above minimization problem and minimizing the functional
\[
\mathcal F(u,\Omega)+f_{\infty}\int_{\partial \Omega}|T_\Omega u-T_{X\setminus\Omega}h|\theta_\Omega \,d\mathcal H
\]
over all $u\in\BV(\Omega)$. Here $T_\Omega u$ and $T_{X\setminus\Omega}u$ are boundary traces and $\theta_\Omega$
is a strictly positive density function. 
This is the main result of the paper, and it extends the Euclidean results in \cite[p. 582]{GiaMS98II} to metric measure spaces.
A careful analysis of $\BV$ extension domains and boundary traces is needed in the argument.

\section{Preliminaries}\label{sec:prelis}
In this paper, $(X,d,\mu)$ is a complete metric measure space 
with a Borel regular outer measure $\mu$.
The measure $\mu$ is assumed to be doubling, meaning that there exists a constant $c_d>0$ such that
\[
0<\mu(B(x,2r))\leq c_d\mu(B(x,r))<\infty
\]
for every ball $B(x,r)$ with center $x\in X$ and radius $r > 0$. For brevity, we will sometimes write $\lambda B$ for $B(x,\lambda r)$. On a metric space, a ball $B$ does not necessarily have a unique center point and radius, but we assume every ball to come with a prescribed center and radius.
The doubling condition implies that
\begin{equation}\label{eq:doubling dimension}
\frac{\mu(B(y,r))}{\mu(B(x,R))}\ge C\left(\frac{r}{R}\right)^Q
\end{equation}
for every $r\leq R$ and $y\in B(x,R)$, and some $Q>1$ and $C\ge1$ that only depend on $c_d$.
We recall that a complete metric space endowed with a doubling measure is proper,
that is, closed and bounded sets are compact. Since $X$ is proper, for any open set $\Omega\subset X$
we define  $\textrm{Lip}_{\loc}(\Omega)$ as the space of
functions that are Lipschitz continuous in every $\Omega'\Subset\Omega$ (and other local spaces of functions are defined similarly).
Here $\Omega'\Subset\Omega$ means that $\Omega'$ is open and that $\overline{\Omega'}$ is a
compact subset of $\Omega$.


For any set $A\subset X$, the restricted spherical Hausdorff content
of codimension $1$ is defined as
\[
\mathcal{H}_{R}(A)=\inf\left\{ \sum_{i=1}^{\infty}\frac{\mu(B(x_{i},r_{i}))}{r_{i}}:\, A\subset\bigcup_{i=1}^{\infty}B(x_{i},r_{i}),\, r_{i}\le R\right\},
\]
where $0<R<\infty$.
The Hausdorff measure of codimension $1$ of a set
$A\subset X$ is
\[
\mathcal{H}(A)=\lim_{R\rightarrow0}\mathcal{H}_{R}(A).
\]

The measure theoretic boundary $\partial^{*}E$ is defined as the set of points $x\in X$
in which both $E$ and its complement have positive density, i.e.
\[
\limsup_{r\rightarrow0}\frac{\mu(B(x,r)\cap E)}{\mu(B(x,r))}>0\quad\;\textrm{and}\quad\;\limsup_{r\rightarrow0}\frac{\mu(B(x,r)\setminus E)}{\mu(B(x,r))}>0.
\]

A curve $\gamma$ is a rectifiable continuous mapping from a compact interval
to $X$. The length of a curve $\gamma$
is denoted by $\ell_{\gamma}$. We will assume every curve to be parametrized
by arc-length, which can always be done (see e.g. \cite[Theorem 3.2]{Hj}).

A nonnegative Borel function $g$ on $X$ is an upper gradient 
of an extended real-valued function $u$
on $X$ if for all curves $\gamma$ in $X$, we have
\begin{equation} \label{ug-cond}
|u(x)-u(y)|\le \int_\gamma g\,ds
\end{equation}
whenever both $u(x)$ and $u(y)$ are finite, and 
$\int_\gamma g\, ds=\infty $ otherwise.
Here $x$ and $y$ are the end points of $\gamma$.
If $g$ is a nonnegative $\mu$-measurable function on $X$
and (\ref{ug-cond}) holds for $1$-almost every curve,
then $g$ is a $1$-weak upper gradient of~$u$. 
A property holds for $1$-almost every curve
if it fails only for a curve family with zero $1$-modulus. 
A family $\Gamma$ of curves is of zero $1$-modulus if there is a 
nonnegative Borel function $\rho\in L^1(X)$ such that 
for all curves $\gamma\in\Gamma$, the curve integral $\int_\gamma \rho\,ds$ is infinite.

We consider the following norm
\[
\Vert u\Vert_{N^{1,1}(X)}=\Vert u\Vert_{L^1(X)}+\inf_g\Vert g\Vert_{L^1(X)},
\]
where the infimum is taken over all upper gradients $g$ of $u$. 
The Newtonian space is defined as
\[
N^{1,1}(X)=\{u:\,\|u\|_{N^{1,1}(X)}<\infty\}/{\sim},
\]
where the equivalence relation $\sim$ is given by $u\sim v$ if and only if 
$\Vert u-v\Vert_{N^{1,1}(X)}=0$. In the definition of upper gradients and Newtonian spaces, the whole space $X$ can be replaced by any $\mu$-measurable (typically open) set $\Omega\subset X$. It is known that for any $u\in N_{\loc}^{1,1}(\Omega)$, there exists a minimal $1$-weak
upper gradient, which we always denote $g_{u}$, satisfying $g_{u}\le g$ 
$\mu$-almost everywhere in $\Omega$, for any $1$-weak upper gradient $g\in L_{\loc}^{1}(\Omega)$
 of $u$ \cite[Theorem 2.25]{BB}.
For more on Newtonian spaces, we refer to \cite{S} and \cite{BB}.

Next we recall the definition and basic properties of functions
of bounded variation on metric spaces, see \cite{A2}, \cite{AMP} and \cite{M}. 
For $u\in L^1_{\text{loc}}(X)$, we define the total variation of $u$ as
\begin{align*}
&\|Du\|(X)\\
&\quad =\inf\left\{\liminf_{i\to\infty}\int_Xg_{u_i}\,d\mu:\, u_i\in \Lip_{\loc}(X),\, u_i\to u\textrm{ in } L^1_{\text{loc}}(X)\right\},
\end{align*}
where $g_{u_i}$ is the minimal $1$-weak upper gradient of $u_i$.
We say that a function $u\in L^1(X)$ is of bounded variation, 
and write $u\in\BV(X)$, if $\|Du\|(X)<\infty$. 
Moreover, a $\mu$-measurable set $E\subset X$ is said to be of finite perimeter if $\|D\chi_E\|(X)<\infty$.
By replacing $X$ with an open set $\Omega\subset X$ in the definition of the total variation, we can define $\|Du\|(\Omega)$.
For an arbitrary set $A\subset X$, we define
\[
\|Du\|(A)=\inf\{\|Du\|(\Omega):\, A\subset\Omega,\,\Omega\subset X
\text{ is open}\}.
\]
If $u\in\BV(\Omega)$, $\|Du\|(\cdot)$ is a finite Radon measure on $\Omega$ by \cite[Theorem 3.4]{M}.
The perimeter of $E$ in $\Omega$ is denoted by
\[
P(E,\Omega)=\|D\chi_E\|(\Omega).
\]
We have the following coarea formula given by Miranda in \cite[Proposition 4.2]{M}: if $\Omega\subset X$ is an open set and $u\in L_{\loc}^{1}(\Omega)$, then
\begin{equation}\label{eq:coarea}
\|Du\|(\Omega)=\int_{-\infty}^{\infty}P(\{u>t\},\Omega)\,dt.
\end{equation}
For an open set $\Omega\subset X$ and a set of locally finite perimeter $E\subset X$, we know that
\begin{equation}\label{eq:def of theta}
\Vert D\chi_{E}\Vert(\Omega)=\int_{\partial^{*}E\cap \Omega}\theta_E\,d\mathcal H,
\end{equation}
where $\theta_E:X\to [\alpha,c_d]$, with $\alpha=\alpha(c_d,c_P)>0$, see \cite[Theorem 5.3]{A2} and \cite[Theorem 4.6]{AMP}. The constant $c_P$ is related to the Poincar\'e inequality, see below.

The jump set of a function $u\in\BV_{\loc}(X)$ is defined as
\[
S_{u}=\{x\in X:\,u^{\wedge}(x)<u^{\vee}(x)\},
\]
where $u^{\wedge}$ and $u^{\vee}$ are the lower and upper approximate limits of $u$ defined as
\[
u^{\wedge}(x)
=\sup\left\{t\in\overline\R:\,\lim_{r\to0}\frac{\mu(\{u<t\}\cap B(x,r))}{\mu(B(x,r))}=0\right\}
\]
and
\[
u^{\vee}(x)
=\inf\left\{t\in\overline\R:\,\lim_{r\to0}\frac{\mu(\{u>t\}\cap B(x,r))}{\mu(B(x,r))}=0\right\}.
\]
Outside the jump set, i.e. in $X\setminus S_u$, $\mathcal H$-almost every point is a Lebesgue point of $u$ \cite[Theorem 3.5]{KKST}, and we denote the Lebesgue limit at $x$ by $\widetilde{u}(x)$.

%

We say that $X$ supports a $(1,1)$-Poincar\'e inequality
if there exist constants $c_P>0$ and $\lambda \ge1$ such that for all
balls $B(x,r)$, all locally integrable functions $u$,
and all $1$-weak upper gradients $g$ of $u$, we have 
\[
\vint{B(x,r)}|u-u_{B(x,r)}|\, d\mu 
\le c_P r\,\vint{B(x,\lambda r)}g\,d\mu,
\]
where 
\[
u_{B(x,r)}=\vint{B(x,r)}u\,d\mu =\frac 1{\mu(B(x,r))}\int_{B(x,r)}u\,d\mu.
\]
If the space supports a $(1,1)$-Poincar\'e inequality, by an approximation argument we get for
every $u\in L^1_{\loc}(X)$
\[
\vint{B(x,r)} |u-u_{B(x,r)}|\, d\mu 
\le c_P r\frac{\|Du\|(B(x,\lambda r))}{\mu(B(x,\lambda r))},
\]
where the constant $c_P$ and the dilation factor $\lambda$ are the same as in the  $(1, 1)$-Poincar\'e inequality. When $u=\chi_{E}$ for $E\subset X$, we get the relative isoperimetric inequality
\begin{equation}\label{eq:isop ineq}
\min\{\mu(B(x,r)\cap E), \mu(B(x,r)\setminus E)\} 
\le 2c_P r\|D\chi_{E}\|(B(x,\lambda r)).
\end{equation}
\emph{Throughout the work we assume, without further notice,  that the measure $\mu$ is doubling and that
the space supports a  $(1, 1)$-Poincar\'e inequality.}

\section{Functional and its measure property}\label{sec:functional}

In this section we define the functional that is considered in this paper, and show that it defines a Radon measure. Let $f$ be a convex nondecreasing function that is defined on $[0,\infty)$ and satisfies the linear growth condition
\begin{equation}\label{eq:linear growth}
mt\le f(t)\le M(1+t)
\end{equation}
for all $t\ge 0$, with some constants $0<m\le M<\infty$. This implies that $f$ is Lipschitz continuous with constant $L>0$. Furthermore, we define
\[
f_{\infty}=\sup_{t>0} \frac{f(t)-f(0)}{t}=\lim_{t \to \infty} \frac{f(t)-f(0)}{t}=\lim_{t\to\infty}\frac{f(t)}{t},
\]
where the second equality follows from the convexity of $f$.
From the definition of $f_{\infty}$, we get the simple estimate
\begin{equation}\label{eq:estimate for f}
f(t)\le f(0)+tf_{\infty}
\end{equation}
for all $t\ge 0$. This will be useful for us later.

Now we give the definition of the functional. For an open set $\Omega$ and $u\in N^{1,1}(\Omega)$, we could define it as 
\[
u\longmapsto\int_{\Omega}f(g_u)\,d\mu, 
\]
where $g_{u}$ is the minimal 1-weak upper gradient of $u$. For $u\in \BV(\Omega)$, we need to use a relaxation procedure as given in the following definition.

\begin{definition}
Let $\Omega\subset X$ be an open set. For $u\in L^1_{\loc}(\Omega)$, we define
\begin{align*}
&\mathcal F(u,\Omega)\\
&\quad =\inf\left\{\liminf_{i\to\infty}\int_{\Omega}f(g_{u_i})\,d\mu:\,u_i\in \liploc(\Omega),\,u_i\to u\text{ in }L^1_{\loc}(\Omega)\right\},
\end{align*}
where $g_{u_i}$ is the minimal 1-weak upper gradient of $u_i$.
\end{definition}

Note that we could equally well require that $g_{u_i}$ is \emph{any} 1-weak upper gradient of $u_i$.
We define $\mathcal F(u,A)$ for an arbitrary set $A \subset X$ by
\begin{equation}\label{eq:def of F for general sets}
\mathcal F(u,A)=\inf\{\mF(u,\Omega): \,\Omega \textrm{ is open,}\,A\subset \Omega\}.
\end{equation}
In this section we show that if $u\in L_{\loc}^1(\Omega)$ with $\mathcal F(u,\Omega)<\infty$, then $\mF(u,\cdot)$ is a Borel regular outer measure on $\Omega$, extending \cite[Theorem 3.4]{M}. The functional clearly satisfies
\begin{equation}\label{eq:basic estimate for functional}
m\Vert Du \Vert(A) \le \mF(u,A) \le M(\mu(A)+\Vert Du\Vert(A))
\end{equation}
for any $A\subset X$. This estimate follows directly from the definition of the functional, the definition of the variation measure, and \eqref{eq:linear growth}.
It is also easy to see that
\[
\mF(u,B)\le \mF(u,A)
\]
for any sets $B\subset A\subset X$.

In order to show the measure property, we first prove a few lemmas. The first is the following technical gluing lemma that is similar to \cite[Lemma 5.44]{AmbFP00}.
\begin{lemma}\label{joining lemma}
Let $U'$, $U$, $V'$, $V$ be open sets in $X$ such that $U'\Subset U$ and $V'\subset V$. Then there exists an open set $H\subset (U\setminus U')\cap V'$, with $H\Subset U$, such that for any $\eps>0$ and any pair of functions $u\in \liploc(U)$ and $v\in \liploc(V)$, there is a function $\phi\in\Lip_{c}(U)$ with $0\le\phi\le 1$ and $\phi=1$ in a neighborhood of $U^{'}$, such that the function $w=\phi u+(1-\phi)v\in\liploc(U'\cup V')$ satisfies
\[
\int_{U'\cup V'} f(g_w)\,d\mu \le \int_U f(g_u)\,d\mu + \int_V f(g_v)\,d\mu + C\int_H |u-v|\,d\mu + \eps.
\]
Here $C=C(U,U',M)$.
\end{lemma}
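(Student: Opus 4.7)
The plan is to construct a single Lipschitz cutoff $\phi$ supported in a thin tubular neighborhood of $\overline{U'}$, set $w=\phi u+(1-\phi)v$, and push the $g_u$- and $g_v$-pieces onto $U$ and $V$ respectively via a pointwise Leibniz-type estimate together with convexity of $f$, leaving a shell-supported remainder controlled by $|u-v|$. Write $d(x):=\dist(x,\overline{U'})$ and $d_0:=\dist(\overline{U'},X\setminus U)>0$; I fix $\eta\in(0,d_0/3)$ and set
\[
\phi(x)=\min\bigl\{1,\,\max\bigl\{0,\,2-\tfrac{1}{\eta}d(x)\bigr\}\bigr\},
\]
which is $(1/\eta)$-Lipschitz, identically $1$ on the open neighborhood $\{d<\eta\}$ of $\overline{U'}$, and supported in $\{d\le 2\eta\}\Subset U$. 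I then take $H:=\{\eta<d<2\eta\}\cap V'$; it is open, contained in $(U\setminus U')\cap V'$, has compact closure in $U$, and depends only on $U,U',V'$.

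With $w:=\phi u+(1-\phi)v$ (using $\phi u\equiv 0$ outside $\supp\phi\subset U$), support considerations give $w=u$ on $\{d<\eta\}$, $w=v$ on $V'\setminus\supp\phi$, and on the intermediate shell both $u$ and $v$ are defined and locally Lipschitz, so $w\in\liploc(U'\cup V')$. The key step is the pointwise upper-gradient bound
\[
g_w\le\phi g_u+(1-\phi)g_v+g_\phi|u-v|\qquad\mu\text{-a.e.\ on }U'\cup V',
\]
which I would derive from the algebraic identity
\[
w(y)-w(x)=\phi(y)(u(y)-u(x))+(1-\phi(y))(v(y)-v(x))+(\phi(y)-\phi(x))(u(x)-v(x)),
\]
giving $\lip w\le\phi\lip u+(1-\phi)\lip v+\lip\phi\cdot|u-v|$ pointwise, together with the Cheeger-type identification $g_u=\lip u$ $\mu$-a.e.\ for locally Lipschitz functions under the standing doubling + $(1,1)$-Poincar\'e assumption. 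Monotonicity and the Lipschitz constant $L=L(M)$ of $f$ peel off the last summand to give $f(g_w)\le f(\phi g_u+(1-\phi)g_v)+Lg_\phi|u-v|$, and convexity of $f$ on the convex combination yields $f(g_w)\le\phi f(g_u)+(1-\phi)f(g_v)+Lg_\phi|u-v|$ $\mu$-a.e.

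Integrating over $U'\cup V'$: using $\phi\le 1$ and $\supp\phi\subset U$ I get $\int_{U'\cup V'}\phi f(g_u)\,d\mu\le\int_U f(g_u)\,d\mu$; using $1-\phi\equiv 0$ on $\{d<\eta\}\supset U'$ together with $V'\subset V$ I get $\int_{U'\cup V'}(1-\phi)f(g_v)\,d\mu\le\int_V f(g_v)\,d\mu$; and $g_\phi\le 1/\eta$ supported in $H$ gives $L\int_{U'\cup V'}g_\phi|u-v|\,d\mu\le(L/\eta)\int_H|u-v|\,d\mu$. Taking $C:=L/\eta$, depending only on $U,U',M$, completes the estimate, with the free $\eps$ available to absorb any residual slack from a mollification or limit passage. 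The main obstacle is the sharpness of the upper-gradient inequality on the shell: the na\"\i ve product rule applied to $w=v+\phi(u-v)$ only yields $g_w\le\phi g_u+(1+\phi)g_v+g_\phi|u-v|$, whose coefficient $(1+\phi)$ on $g_v$ destroys the clean convex-combination step; resolving this requires either the Cheeger identification $g_u=\lip u$ above or a shell-averaging argument over cutoffs $\phi_s$ with varying inner radius $s\in(0,\tau)$, where the unwanted overshoot has averaged contribution controlled by $\mu(\{0<d<\tau+\delta\})+\int_{\{0<d<\tau+\delta\}}(f(g_u)+f(g_v))\,d\mu$, which vanishes as $\tau\to 0$ and supplies the $\eps$.
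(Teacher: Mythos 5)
Your proof is correct, and it takes a genuinely different route from the paper's. The paper does not invoke convexity of $f$ at this point: it only uses the crude linear growth bound $f(t)\le M(1+t)$ on the transition shell, which produces an error term $M\int_{H_i}(1+g_u+g_v)\,d\mu$ that it then kills by a pigeonhole/averaging argument over $k$ thin shells $H_1,\dots,H_k$ partitioning $H$ (choosing $k$ large so that the averaged error is $<\eps$, then selecting the best index). Your observation that convexity gives $f(\phi g_u+(1-\phi)g_v)\le\phi f(g_u)+(1-\phi)f(g_v)$, combined with the $L$-Lipschitz property of $f$ to peel off the $g_\phi|u-v|$ term, eliminates that error altogether: a \emph{single} cutoff $\phi$ suffices, $C=L/\eta$ works, and the $\eps$ in the conclusion is not even needed. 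This is cleaner and arguably more natural given that $f$ is assumed convex throughout the paper. What the paper's route buys in exchange is that it never uses convexity of $f$ in this lemma, relying only on the one-sided growth bound.

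One small caveat about the technical step you flag. You are right that the naive product rule applied to $w=v+\phi(u-v)$ only gives the coefficient $(1+\phi)$ on $g_v$, and that the sharp bound $g_w\le\phi g_u+(1-\phi)g_v+g_\phi|u-v|$ requires a genuine argument. However, invoking Cheeger's identification $g_u=\lip u$ is heavier machinery than necessary. The paper simply cites \cite[Lemma 2.18]{BB}, which obtains the sharp bound directly at the level of upper gradients: along $1$-a.e.\ curve the compositions $u\circ\gamma$, $v\circ\gamma$, $\phi\circ\gamma$ are absolutely continuous with derivatives dominated by $g_u\circ\gamma$, etc., and the algebraic identity you wrote then gives the desired pointwise bound for $|(w\circ\gamma)'|$, hence the upper gradient inequality. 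Your Cheeger-based fix is valid but is a considerably stronger theorem than what is actually needed; and your alternative shell-averaging fix is precisely the paper's own argument. With \cite[Lemma 2.18]{BB} in hand, your single-cutoff-plus-convexity proof goes through with no residual slack at all.
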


\begin{proof}
Let $\eta = \dist(U',X\setminus U) >0$. 
Define 
\[
H=\left\{x\in U\cap V':\, \frac{\eta}{3} < \dist(x,U')<\frac{2\eta}{3}\right\}.
\]
Now fix $u\in \liploc(U),\,v\in \liploc(V)$ and $\eps>0$. Choose $k\in \N$ such that 
\begin{equation}\label{eq:gluing estimate Lipschitz}
M\int_H(1+g_u+g_v)\,d\mu < \eps k
\end{equation}
if the above integral is finite --- otherwise the desired estimate is trivially true.
For $i=1,\ldots,k$, define the sets
\[
H_i=\left\{x\in U\cap V':\, \frac{(k+i-1)\eta}{3k} < \dist(x,U') < \frac{(k+i)\eta}{3k}\right\},
\]
so that $H\supset\cup_{i=1}^k H_i$, and define the Lipschitz functions
\[
\phi_i(x)=\begin{cases}
0,                                          & \dist(x, U') > \frac{k+i}{3k} \eta, \\
\frac1\eta((k+i)\eta - 3k \dist(x,U')),\!\! & \frac{k+i-1}{3k}\eta \le \dist(x, U') \le \frac{k+i}{3k} \eta, \\
1,                                          & \dist(x, U') < \frac{k+i-1}{3k} \eta.
\end{cases}
\]
Now $g_{\phi_i}=0$ $\mu$-almost everywhere in $U\cap V'\setminus H_i$ \cite[Corollary 2.21]{BB}. Let $w_i=\phi_iu+(1-\phi_i)v$ on $U'\cup V'$. We have the estimate
\[
g_{w_i} \le \phi_i g_u + (1-\phi_i)g_v + g_{\phi_i} |u-v|,
\]
see \cite[Lemma 2.18]{BB}. By also using the estimate $f(t) \le M(1+t)$, we get
\begin{align*}
\int_{U'\cup V'} f(g_{w_i})\,d\mu &\le \int_U f(g_u)\,d\mu + \int_V f(g_v)\,d\mu + \int_{H_i} f(g_{w_i})\,d\mu \\
                                &\le \int_U f(g_u)\,d\mu + \int_V f(g_v)\,d\mu\\
                                &\quad\ +M\int_{H_i} (1+g_u+g_v)\,d\mu + \frac{3Mk}{\eta}\int_{H_i}|u-v|\,d\mu.
\end{align*}
Now, since $H\supset\cup_{i=1}^k H_i$, we have
\begin{align*}
\frac{1}{k}\sum_{i=1}^k& \int_{U'\cup V'} f(g_{w_i})\,d\mu\\
                      &\le \int_U f(g_u)\,d\mu + \int_V f(g_v)\,d\mu + \frac{M}{k} \int_H (1+g_u+g_v)\,d\mu \\
                      &\qquad+ \frac{3M}{\eta} \int_H |u-v|\,d\mu\\
                      &\le \int_U f(g_u)\,d\mu + \int_V f(g_v)\,d\mu+C \int_H|u-v|\,d\mu + \eps.
\end{align*}
In the last inequality we used \eqref{eq:gluing estimate Lipschitz}. Thus we can find an index $i$ such that the function $w=w_i$ satisfies the desired estimate.
\end{proof}

In the following lemmas, we assume that $u\in L_{\loc}^1(A\cup B)$.

\begin{lemma}\label{inner regularity lemma}
Let $A\subset X$ be open with $\mF(u,A)<\infty$. Then
\[
\mF(u,A)= \sup_{B\Subset A} \mF(u,B).
\]
\end{lemma}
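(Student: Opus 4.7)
The inequality $\mF(u,A)\ge\sup_{B\Subset A}\mF(u,B)$ follows from the monotonicity of $\mF$ noted right after its definition; only the reverse requires work. Set $S=\sup_{B\Subset A}\mF(u,B)\le\mF(u,A)<\infty$ and fix an exhaustion $A_0=\emptyset\Subset A_1\Subset A_2\Subset\cdots\Subset A$ with $\bigcup_n A_n=A$, possible since $X$ is proper. The plan is to prove, for each $n$, the subadditivity estimate
\[
\mF(u,A)\le\mF(u,A_n)+\mF(u,A\setminus\overline{A_{n-1}})
\]
by means of Lemma~\ref{joining lemma}, and then to pass to $n\to\infty$ after checking that the second summand vanishes.

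The vanishing of the outer term is the easy ingredient. Integrating the pointwise estimate \eqref{eq:estimate for f} along an arbitrary Lipschitz competitor gives the general bound $\mF(u,\Omega)\le f(0)\mu(\Omega)+f_\infty\|Du\|(\Omega)$ for every open $\Omega\subset X$. From $\mF(u,A)<\infty$ and \eqref{eq:basic estimate for functional} we have $\|Du\|(A)<\infty$, so $\|Du\|(A\setminus\overline{A_{n-1}})\to 0$ by countable additivity. The factor $f(0)\mu(A\setminus\overline{A_{n-1}})$ is identically zero when $f(0)=0$; otherwise the constant competitor forces $\mF(u,A)\ge f(0)\mu(A)$, hence $\mu(A)<\infty$, and the factor again tends to zero.

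For the subadditivity, fix an intermediate open set $A_{n-1}\Subset A_n'\Subset A_n$ and apply Lemma~\ref{joining lemma} with $U'=A_n'$, $U=A_n$, $V'=V=A\setminus\overline{A_{n-1}}$. Since $\overline{A_{n-1}}\subset A_n'$ one has $U'\cup V'=A$, and the other hypotheses are clear. Let $\{u_k\}\subset\liploc(A_n)$ and $\{v_k\}\subset\liploc(A\setminus\overline{A_{n-1}})$ be recovery sequences for $\mF(u,A_n)$ and $\mF(u,A\setminus\overline{A_{n-1}})$. For each $k$ the lemma, applied with $\eps=1/k$, produces $w_k\in\liploc(A)$ with
\[
\int_A f(g_{w_k})\,d\mu\le\int_{A_n}f(g_{u_k})\,d\mu+\int_{A\setminus\overline{A_{n-1}}}f(g_{v_k})\,d\mu+C\int_{H_n}|u_k-v_k|\,d\mu+\tfrac1k,
\]
where $C$ does not depend on $k$, and the gluing strip $H_n$ lies in $\{x:\eta_n/3<\dist(x,A_n')<2\eta_n/3\}\cap(A_n\setminus\overline{A_{n-1}})$ with $\eta_n=\dist(A_n',X\setminus A_n)>0$. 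The distance conditions together with the properness of $X$ force $\overline{H_n}$ to be compact inside both $A_n$ (bounded away from $X\setminus A_n$) and $A\setminus\overline{A_{n-1}}$ (bounded away from $\overline{A_{n-1}}\subset A_n'$), so $u_k,v_k\to u$ in $L^1(H_n)$ and the gluing error tends to $0$ as $k\to\infty$.

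A diagonal extraction in $n$ and $k$ now concludes the argument: for each $n$ choose $k(n)$ so large that the gluing error is at most $1/n$ and $\|u_{k(n)}-u\|_{L^1(\overline{A_{n-1}})}\le 1/n$, and set $\tilde w_n=w_{k(n)}$. Since $\tilde w_n=u_{k(n)}$ on $A_{n-1}\subset A_n'$ (where the cutoff in Lemma~\ref{joining lemma} equals $1$), $\tilde w_n\to u$ in $L^1_\loc(A)$. Moreover,
\[
\liminf_{n\to\infty}\int_A f(g_{\tilde w_n})\,d\mu\le\limsup_{n\to\infty}\bigl(\mF(u,A_n)+\mF(u,A\setminus\overline{A_{n-1}})\bigr)\le S,
\]
so $\mF(u,A)\le S$ by the definition of $\mF(u,A)$. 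The main technical point is the claim that $\overline{H_n}$ is compactly contained in both approximation domains, which makes the gluing error vanish in the limit; this is precisely the role of the intermediate set $A_n'$ and uses the properness of $X$.
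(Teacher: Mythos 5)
Your proof is correct and follows essentially the same route as the paper: fix intermediate sets $A_{n-1}\Subset A_n'\Subset A_n\Subset A$, apply Lemma~\ref{joining lemma} with $U'=A_n'$, $U=A_n$, $V'=V=A\setminus\overline{A_{n-1}}$, observe that the gluing strip $H_n$ is compactly contained in both approximation domains so the error $\int_{H_n}|u_k-v_k|\,d\mu$ vanishes, and exhaust. The paper uses the same three-layer structure $B_1\Subset B_2\Subset B_3$ and the same application of the gluing lemma.

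Two minor deviations are worth noting. First, your handling of the step $\mathcal F(u,A\setminus\overline{A_{n-1}})\to 0$ is actually more careful than the paper's. The paper invokes \eqref{eq:basic estimate for functional}, i.e.\ $\mathcal F(u,\Omega)\le M(\mu(\Omega)+\|Du\|(\Omega))$, which silently assumes $\mu(A)<\infty$; you instead use the sharper bound $\mathcal F(u,\Omega)\le f(0)\mu(\Omega)+f_\infty\|Du\|(\Omega)$ from \eqref{eq:estimate for f} and dispose of the case distinction $f(0)=0$ vs.\ $f(0)>0$ cleanly, in the latter case observing that $\mathcal F(u,A)\ge f(0)\mu(A)$ forces $\mu(A)<\infty$. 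This is a genuine, if small, improvement in rigor. Second, your diagonal extraction in $n$ and $k$ to build a single recovery sequence $\tilde w_n$ is unnecessary: for each fixed triple $B_1\Subset B_2\Subset B_3\Subset A$ one already obtains the inequality $\mathcal F(u,A)\le\mathcal F(u,B_3)+\mathcal F(u,A\setminus\overline{B_1})$, and then one simply bounds $\mathcal F(u,B_3)\le S$ and lets $B_1$ exhaust $A$, as the paper does. Your diagonal argument is correct, just a longer way to reach the same conclusion.
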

\begin{proof}
Take open sets $B_1\Subset B_2 \Subset B_3 \Subset A$ and sequences $u_i\in \liploc(B_3)$, $v_i\in \liploc(A\setminus \overline{B_1})$ such that $u_i\to u$ in $L^1_\loc(B_3)$, $v_i\to u$ in $L^1_\loc(A\setminus \overline{B_1})$,
\[
\mF(u,B_3)= \lim_{i\to \infty} \int_{B_3} f(g_{u_i}) \,d\mu,
\]
and
\[
\mF(u,A\setminus \overline{B_1}) = \lim_{i\to \infty} \int_{A\setminus \overline{B_1}} f(g_{v_i}) \,d\mu.
\]
By using Lemma \ref{joining lemma} with $U=B_3$, $U'=B_2$, $V=V'=A\setminus \overline{B_1}$ and $\eps=1/i$, we find a set $H\subset B_3\setminus B_2$, $H\Subset B_3$, and a sequence $w_i \in \liploc (A)$ such that $w_i\to u$ in $L^1_{\loc}(A)$, and
\[
\int_A f(g_{w_i})\,d\mu \le \int_{B_3}f(g_{u_i}) \,d\mu + \int_{A\setminus \overline{B_1}} f(g_{v_i}) \,d\mu + C \int_H|u_i-v_i| \,d\mu + \frac{1}{i}
\]
for every $i\in\N$.
In the above inequality, the last integral converges to zero as $i\to \infty$, since $H\Subset B_3$ and $H\Subset A\setminus \overline{B_1}$. Thus
\[
\mF(u,A)\le \liminf_{i\to\infty}\int_A f(g_{w_i})\,d\mu \le \mF(u,B_3)+\mF(u,A\setminus \overline{B_1}).
\]
Exhausting $A$ with sets $B_1$ concludes the proof, since then  $\mF(u,A\setminus \overline{B_1})\to 0$ by \eqref{eq:basic estimate for functional}.
\end{proof}

\begin{lemma}\label{subadditivity lemma}
Let $A,B\subset X$ be open. Then
\[
\mF(u,A\cup B) \le \mF(u,A)+\mF(u,B).
\]
\end{lemma}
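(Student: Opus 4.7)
My plan is to reduce the subadditivity to the compactly contained case by means of Lemma~\ref{inner regularity lemma}, then construct the recovery sequence by gluing via Lemma~\ref{joining lemma}. If either $\mathcal{F}(u,A)$ or $\mathcal{F}(u,B)$ is infinite the inequality is trivial, so I may assume both are finite. It then suffices to show that for every open $W$ with $W\Subset A\cup B$ one has $\mathcal{F}(u,W)\le \mathcal{F}(u,A)+\mathcal{F}(u,B)$, since passing to the supremum over such $W$ via Lemma~\ref{inner regularity lemma} delivers the claim.

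The first substantive step is an open-cover argument. Given $W\Subset A\cup B$, the closure $\overline{W}$ is compact because $X$ is proper, and it is covered by $\{A,B\}$. By a standard construction---for instance, shrinking $A$ and $B$ by the distance to their complements and intersecting with a sufficiently large ball containing $\overline{W}$---I produce open sets $A_0\Subset A$ and $B_0\Subset B$ with $\overline{W}\subset A_0\cup B_0$. I then fix recovery sequences $u_i\in\liploc(A)$ and $v_i\in\liploc(B)$ with $u_i\to u$ in $L^1_{\loc}(A)$, $v_i\to u$ in $L^1_{\loc}(B)$, and
\[
\lim_{i\to\infty}\int_A f(g_{u_i})\,d\mu=\mathcal{F}(u,A),\qquad \lim_{i\to\infty}\int_B f(g_{v_i})\,d\mu=\mathcal{F}(u,B).
\]

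Next I apply Lemma~\ref{joining lemma} with $U=A$, $U'=A_0$, $V=B$, $V'=B_0$, and $\eps=1/i$. This produces a set $H\subset (A\setminus A_0)\cap B_0$ with $H\Subset A$, together with functions $w_i\in\liploc(A_0\cup B_0)$ satisfying
\[
\int_{A_0\cup B_0}f(g_{w_i})\,d\mu\le \int_A f(g_{u_i})\,d\mu+\int_B f(g_{v_i})\,d\mu+C\int_H|u_i-v_i|\,d\mu+\tfrac{1}{i}.
\]
The crucial observation is that $H\subset B_0\Subset B$, so $H$ is compactly contained in \emph{both} $A$ and $B$. Combined with the two $L^1_{\loc}$ convergences, this forces $\int_H|u_i-v_i|\,d\mu\to 0$. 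The same two-sided containment, together with the convex-combination form $w_i=\phi_i u_i+(1-\phi_i)v_i$ (which equals $u_i$ near $A_0$ and $v_i$ on the part of $B_0$ outside $\supp\phi_i$), yields $w_i\to u$ in $L^1_{\loc}(A_0\cup B_0)$. Taking $\liminf_{i\to\infty}$ gives $\mathcal{F}(u,W)\le \mathcal{F}(u,A_0\cup B_0)\le \mathcal{F}(u,A)+\mathcal{F}(u,B)$, and the proof concludes by passing to the supremum in $W$.

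The main obstacle is precisely this boundary control on the gluing strip $H$. If one tried the more direct choice $V'=V=B$ in Lemma~\ref{joining lemma}, the set $H$ would only be compactly contained in $A$, and its closure in $X$ could touch $\partial B$; then $v_i$ would not be guaranteed to converge to $u$ on $H$ and the error term $\int_H|u_i-v_i|\,d\mu$ need not vanish. Reducing to $W\Subset A\cup B$ first is exactly what permits the symmetric shrinking to $B_0\Subset B$, which in turn guarantees that both gluing errors are controlled in the limit.
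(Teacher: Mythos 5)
Your proposal is correct and follows essentially the same route as the paper's proof: reduce to compactly contained subsets via Lemma \ref{inner regularity lemma}, decompose such a subset as a union $A_0\cup B_0$ with $A_0\Subset A$, $B_0\Subset B$, and glue recovery sequences via Lemma \ref{joining lemma}, noting that $H\Subset A$ and $H\subset B_0\Subset B$ force the error term to vanish. Your extra remarks about the choice $V'=B_0$ (rather than $V'=B$) and about the $L^1_{\loc}$ convergence of the glued sequence are exactly the considerations the paper leaves implicit.
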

\begin{proof}
First we note that
every $C\Subset A\cup B$ can be presented as $C=A'\cup B'$, where $A'\Subset A$ and $B'\Subset B$.
Therefore, according to Lemma \ref{inner regularity lemma}, it suffices to show that
\[
\mF(u,A'\cup B') \le \mF(u,A)+\mF(u,B) 
\]
for every $A'\Subset A$ and $B'\Subset B$. If $\mF(u,A)=\infty$ or $\mF(u,B)=\infty$, the claim holds.
Assume therefore that $\mF(u,A)<\infty$ and $\mF(u,B)<\infty$.
Take sequences $u_i \in \liploc(A)$ and $v_i\in \liploc(B)$ such that $u_i\to u$ in $L^1_\loc(A)$, $v_i\to u$ in $L^1_\loc(B)$,
\[
\mF(u,A)=\lim_{i\to \infty} \int_A f(g_{u_i})\,d\mu,
\]
and
\[
\mF(u,B)=\lim_{i\to \infty} \int_B f(g_{v_i})\,d\mu.
\]
By using Lemma \ref{joining lemma} with $U'=A'$, $U=A$, $V'=B'$, $V=B$ and $\eps=1/i$, we find a set $H\Subset A$, $H\subset B'\Subset B$, and a sequence $w_i\in \liploc(A'\cup B')$ such that $w_i\to u$ in $L^1_\loc(A'\cup B')$, and
\[
\int_{A'\cup B'} f(g_{w_i})\,d\mu \le \int_A f(g_{u_i})\,d\mu + \int_B f(g_{v_i})\,d\mu+C\int_H|u_i-v_i|\,d\mu + \frac{1}{i}
\]
for every $i\in\N$. By the properties of $H$, the last integral in the above inequality converges to zero as $i\to \infty$, and then
\[
\mF(u,A'\cup B') \le \mF(u,A)+\mF(u,B). 
\]
\end{proof}

\begin{lemma}\label{lem:additivity lemma}
Let $A,B\subset X$ be open and let $A\cap B=\emptyset$. Then 
\[
\mF(u,A\cup B) \ge \mF(u,A)+\mF(u,B).
\]
\end{lemma}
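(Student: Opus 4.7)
The plan is to deduce this directly from the definitions, since the disjointness of $A$ and $B$ means no gluing construction is required and the integrals trivially decompose. First I would pick a sequence $u_i \in \liploc(A\cup B)$ with $u_i \to u$ in $L^1_{\loc}(A\cup B)$ that realizes the value $\mF(u,A\cup B)$ as
\[
\mF(u,A\cup B) = \liminf_{i\to\infty} \int_{A\cup B} f(g_{u_i})\, d\mu,
\]
where $g_{u_i}$ is the minimal $1$-weak upper gradient of $u_i$ on $A\cup B$. If $\mF(u,A\cup B) = \infty$ there is nothing to prove, so I assume both $\mF(u,A)$ and $\mF(u,B)$ could contribute nontrivially.

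Next I would verify that $g_{u_i}|_A$ is a $1$-weak upper gradient of $u_i|_A$ on the open set $A$: every rectifiable curve in $A$ is also a curve in $A\cup B$, and a curve family in $A$ of positive $1$-modulus has positive $1$-modulus as a family in $A\cup B$ (using the same admissible $\rho$ extended by zero). Hence the exceptional curve family in $A$ where the upper gradient inequality fails for $g_{u_i}|_A$ is contained in the corresponding exceptional family in $A\cup B$, which has zero $1$-modulus. Since $u_i|_A \in \liploc(A)$ and $u_i|_A \to u$ in $L^1_{\loc}(A)$, invoking the remark after the definition of $\mF$ that we may use any $1$-weak upper gradient in the relaxation gives
\[
\mF(u,A) \le \liminf_{i\to\infty} \int_A f(g_{u_i})\, d\mu,
\]
and analogously for $B$.

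Because $A\cap B = \emptyset$, the integral splits as
\[
\int_{A\cup B} f(g_{u_i})\, d\mu = \int_A f(g_{u_i})\, d\mu + \int_B f(g_{u_i})\, d\mu,
\]
so superadditivity of $\liminf$ yields
\[
\mF(u,A\cup B) \ge \liminf_{i\to\infty}\int_A f(g_{u_i})\, d\mu + \liminf_{i\to\infty}\int_B f(g_{u_i})\, d\mu \ge \mF(u,A) + \mF(u,B).
\]

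There is no real obstacle here; the only subtlety is the upper-gradient restriction step, which is standard but worth stating explicitly, and the use of the author's remark that the minimality of $g_{u_i}$ is not essential in the definition of $\mF$. The combination of this lemma with Lemma~\ref{subadditivity lemma} and Lemma~\ref{inner regularity lemma} then sets up the standard De~Giorgi--Letta criterion for $\mF(u,\cdot)$ to extend to a Borel regular outer measure on $\Omega$.
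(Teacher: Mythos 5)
Your proof is correct and follows essentially the same route as the paper: take a sequence realizing $\mF(u,A\cup B)$, split the integral over the disjoint open sets, and apply superadditivity of $\liminf$. The paper leaves the upper-gradient restriction step implicit, whereas you spell it out; this is a harmless (and slightly more careful) elaboration of the same argument.
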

\begin{proof}
If $\mF(u,A\cup B)=\infty$, the claim holds. Hence we may assume that $\mF(u,A\cup B)<\infty$. Take a sequence $u_i\in \liploc(A\cup B)$ such that $u_i\to u$ in $L^1_\loc (A\cup B)$ and 
\[
\mF(u,A\cup B) = \lim_{i\to \infty} \int_{A\cup B} f(g_{u_i})\,d\mu.
\]
Then, since $A$ and $B$ are disjoint,
\begin{align*}
\mF(u,A\cup B) &= \lim_{i\to \infty} \int_{A\cup B} f(g_{u_i})\,d\mu \\
&\ge \liminf_{i\to \infty} \int_{A} f(g_{u_i})\, d\mu + \liminf_{i\to \infty} \int_{B} f(g_{u_i})\,d\mu\\
&\ge \mF(u,A)+\mF(u,B).
\end{align*}
\end{proof}

Now we are ready to prove the measure property of the functional.

\begin{theorem}\label{thm:measure_prop}
Let $\Omega\subset X$ be an open set, and let $u\in L^1_{\loc}(\Omega)$ with $\mathcal F(u,\Omega)<\infty$. Then $\mathcal F(u,\cdot)$ is a Borel regular outer measure on $\Omega$.
\end{theorem}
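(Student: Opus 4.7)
\emph{Approach.} The plan is to follow the standard measure-theoretic scheme: verify the three outer measure axioms on arbitrary subsets of $\Omega$, establish the Carath\'eodory condition for sets with positive distance so that every Borel set is $\mF(u,\cdot)$-measurable, and finally construct a Borel hull for Borel regularity. Lemmas \ref{inner regularity lemma}, \ref{subadditivity lemma}, and \ref{lem:additivity lemma} have been arranged precisely to feed this machinery.

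\emph{Outer measure axioms.} The equality $\mF(u,\emptyset)=0$ is clear, and monotonicity on arbitrary sets is immediate from \eqref{eq:def of F for general sets}, since any open set containing $B\supset A$ is a competitor for $A$; monotonicity on open subsets follows directly from the relaxation definition by restricting approximating sequences. For countable subadditivity on arbitrary $A_k\subset\Omega$, given $\eps>0$ I choose open $V_k\subset\Omega$ with $A_k\subset V_k$ and $\mF(u,V_k)<\mF(u,A_k)+\eps 2^{-k}$, and set $V=\bigcup_k V_k$. Since $V\subset\Omega$, monotonicity gives $\mF(u,V)\le\mF(u,\Omega)<\infty$, so Lemma \ref{inner regularity lemma} applies. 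For any open $B\Subset V$, compactness of $\overline{B}$ yields a finite subcover $V_1,\ldots,V_N$, and iterating Lemma \ref{subadditivity lemma} gives
\[
\mF(u,B)\le\sum_{k=1}^{N}\mF(u,V_k)\le\sum_{k=1}^{\infty}\mF(u,A_k)+\eps.
\]
Taking the supremum over $B\Subset V$ via Lemma \ref{inner regularity lemma} and then letting $\eps\to 0$ yields $\mF(u,\bigcup_k A_k)\le\sum_k\mF(u,A_k)$.

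\emph{Borel measurability.} I verify the Carath\'eodory criterion: if $E_1,E_2\subset\Omega$ with $\dist(E_1,E_2)>0$, then $\mF(u,E_1\cup E_2)\ge\mF(u,E_1)+\mF(u,E_2)$, the reverse being subadditivity. Given $\eps>0$, pick an open set $V\subset\Omega$ with $E_1\cup E_2\subset V$ and $\mF(u,V)\le\mF(u,E_1\cup E_2)+\eps$. Setting $\delta=\dist(E_1,E_2)/3$, the sets $V_j=V\cap\{x:\dist(x,E_j)<\delta\}$ for $j=1,2$ are open and disjoint, with $E_j\subset V_j$ and $V_1\cup V_2\subset V$. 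Monotonicity together with Lemma \ref{lem:additivity lemma} gives
\[
\mF(u,V)\ge\mF(u,V_1\cup V_2)\ge\mF(u,V_1)+\mF(u,V_2)\ge\mF(u,E_1)+\mF(u,E_2),
\]
and letting $\eps\to 0$ finishes the separation step. The standard metric Carath\'eodory criterion then implies that every Borel subset of $\Omega$ is $\mF(u,\cdot)$-measurable.

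\emph{Borel regularity and the main obstacle.} For any $A\subset\Omega$ and each $k\in\N$, by \eqref{eq:def of F for general sets} I pick an open $V_k\subset\Omega$ with $A\subset V_k$ and $\mF(u,V_k)\le\mF(u,A)+1/k$; then $B=\bigcap_k V_k$ is a Borel $G_\delta$ set containing $A$, and monotonicity forces $\mF(u,B)=\mF(u,A)$, giving Borel regularity. The only nonroutine point in the whole argument is the upgrade from the finite subadditivity of Lemma \ref{subadditivity lemma} to countable subadditivity on open sets, which is exactly where the hypothesis $\mF(u,\Omega)<\infty$ enters, via the inner regularity supplied by Lemma \ref{inner regularity lemma}. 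Once this is in place, the Carath\'eodory separation and the $G_\delta$ hull are standard.
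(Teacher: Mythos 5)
Your proposal is correct and follows essentially the same route as the paper: inner regularity (Lemma \ref{inner regularity lemma}) plus compactness to upgrade finite subadditivity (Lemma \ref{subadditivity lemma}) to countable subadditivity, the Carath\'eodory criterion via disjoint open neighborhoods and Lemma \ref{lem:additivity lemma}, and a $G_\delta$ hull for Borel regularity. The only cosmetic difference is that you prove countable subadditivity for arbitrary sets in one pass (enlarging each $A_k$ to an open $V_k$ with a $2^{-k}\eps$ margin), whereas the paper first treats countable unions of open sets and then remarks that the general case follows by approximation.
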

\begin{proof}
First we show that $\mF(u,\cdot)$ is an outer measure on $\Omega$. Obviously $\mF(u,\emptyset)=0$. As mentioned earlier, clearly $\mF(u,A)\le \mF(u,B)$ for any $A\subset B\subset \Omega$.
Take open sets $A_i\subset \Omega$, $i=1,2,\ldots$.  Let $\eps >0$. By Lemma \ref{inner regularity lemma} there exists a set $B\Subset \cup_{i=1}^\infty A_i$ such that 
\[
\mF\left(u,\bigcup_{i=1}^\infty A_i\right) < \mF(u,B) + \eps.
\]
Since $\overline{B}\subset \cup_{i=1}^\infty A_i$ is compact, there exists $n\in \N$ such that 
$B\subset \overline{B} \subset \cup_{i=1}^n A_i$.
Then by Lemma \ref{subadditivity lemma},
\[
\mF(u,B) \le \mF \left(u, \bigcup_{i=1}^n A_i \right) \le \sum_{i=1}^n \mF(u,A_i),
\]
and thus letting $n\to \infty$ and $\eps \to 0$ gives us
\begin{equation}\label{eq: countable subadditivity}
\mF\bigg(u,\bigcup_{i=1}^\infty A_i\bigg) \le \sum_{i=1}^\infty \mF(u,A_i).
\end{equation}
For general sets $A_i$, we can prove \eqref{eq: countable subadditivity} by approximation with open sets. 

The next step is to prove that $\mF(u,\cdot)$ is a Borel outer measure. Let $A,B\subset \Omega$ satisfy $\dist(A,B)>0$. Fix $\eps>0$ and choose an open set $U\supset A\cup B$ such that 
\[
\mF(u,A\cup B) > \mF(u,U)-\eps.
\]
Define the sets
\begin{align*}
&V_A= \left\{x\in \Omega :\, \dist(x,A) < \frac{\dist(A,B)}{3}\right \} \cap U,\\
&V_B= \left\{x\in \Omega :\, \dist(x,B) < \frac{\dist(A,B)}{3}\right \} \cap U.
\end{align*}
Then $V_A, V_B$ are open and $A\subset V_A$, $B\subset V_B$. Moreover $V_A\cap V_B=\emptyset$. Thus by Lemma \ref{lem:additivity lemma},
\begin{align*}
\mF(u,A\cup B) &\ge \mF(u,V_A\cup V_B)-\eps \\
               &\ge\mF(u,V_A)+\mF(u,V_B) -\eps\\
               &\ge \mF(u,A)+\mF(u,B)-\eps.
\end{align*}
Now letting $\eps\to 0$ shows that $\mF(u,\cdot)$ is a Borel outer measure by Carath{\' e}odory's criterion.

The measure $\mF(u,\cdot)$ is Borel regular by construction, since for every $A\subset \Omega$ we may choose open sets $V_i$ such that $A\subset V_i \subset \Omega$ and 
\[
\mF(u,V_i)<\mF(u,A)+\frac{1}{i},
\]
and by defining $V=\cap_{i=1}^\infty V_i$, we get $\mF(u,V)=\mF(u,A)$,
where $V\supset A$ is a Borel set.
\end{proof}

As a simple application of the measure property of the functional, we show the following approximation result.

\begin{proposition}\label{prop:weak convergence}
Let $\Omega\subset X$ be an open set, and let $u\in L^1_{\loc}(\Omega)$ with $\mathcal F(u,\Omega)<\infty$. Then for any sequence of functions $u_i\in \liploc(\Omega)$ for which $u_i\to u$ in $L_{\loc}^1(\Omega)$ and
\[
\int_{\Omega}f(g_{u_i})\,d\mu\to \mathcal F(u,\Omega),
\]
we also have $f(g_{u_i})\,d\mu \overset{*}{\rightharpoonup} d\mathcal F(u,\cdot)$ in $\Omega$.
\end{proposition}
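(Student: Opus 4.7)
The plan is to verify the two standard ingredients that, together with the mass-convergence hypothesis, force weak-$\ast$ convergence of the Radon measures $\nu_i := f(g_{u_i})\,d\mu$ to $\nu := \mathcal{F}(u,\cdot)$ on the locally compact space $\Omega$. By Theorem \ref{thm:measure_prop} together with the assumption $\mathcal{F}(u,\Omega)<\infty$, $\nu$ is a finite Borel regular measure on $\Omega$; the local finiteness needed to view $\nu$ as a Radon measure comes from the upper bound in \eqref{eq:basic estimate for functional}. The mass-convergence $\nu_i(\Omega)\to\nu(\Omega)$ is the very hypothesis of the proposition.

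The first ingredient comes essentially for free from the definition of $\mathcal{F}$: for any open $A\subset\Omega$, the restricted sequence $(u_i|_A)$ lies in $\liploc(A)$ and still converges to $u$ in $L^1_{\loc}(A)$, so
\begin{equation*}
\nu(A) \;=\; \mathcal{F}(u,A) \;\le\; \liminf_{i\to\infty} \int_A f(g_{u_i})\,d\mu \;=\; \liminf_{i\to\infty} \nu_i(A).
\end{equation*}
Combining this with $\nu_i(\Omega)\to\nu(\Omega)$, I would then obtain, by complementation, the dual bound for closed sets $F\subset\Omega$:
\begin{equation*}
\limsup_{i\to\infty}\nu_i(F) \;=\; \lim_{i}\nu_i(\Omega) - \liminf_{i}\nu_i(\Omega\setminus F) \;\le\; \nu(\Omega)-\nu(\Omega\setminus F) \;=\; \nu(F).
\end{equation*}

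Given these two one-sided bounds, weak-$\ast$ convergence is a standard Portmanteau-type argument. For $\varphi\in C_c(\Omega)$ with $\varphi\ge 0$ I would use the layer-cake formula
\begin{equation*}
\int_\Omega \varphi\, d\nu_i \;=\; \int_0^{\|\varphi\|_\infty} \nu_i(\{\varphi>t\})\, dt \;=\; \int_0^{\|\varphi\|_\infty} \nu_i(\{\varphi\ge t\})\, dt.
\end{equation*}
Applying Fatou to the first expression (the level sets $\{\varphi>t\}$ are open) yields $\liminf_i\int\varphi\,d\nu_i\ge\int\varphi\,d\nu$, while the reverse Fatou lemma applied to the second (the level sets $\{\varphi\ge t\}$ are compact in $\Omega$ for $t>0$, and the integrands are bounded uniformly in $i$ by $\sup_i\nu_i(\Omega)<\infty$) yields $\limsup_i\int\varphi\,d\nu_i\le\int\varphi\,d\nu$. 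Decomposing a general $\varphi\in C_c(\Omega)$ as $\varphi^+-\varphi^-$ finishes the proof.

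I do not expect a serious obstacle here: the measure property Theorem \ref{thm:measure_prop} is the substantive input and has already been proved, and the rest is bookkeeping. The only mild point is to ensure local finiteness of $\nu$, which is precisely what \eqref{eq:basic estimate for functional} delivers and what legitimizes the Portmanteau machinery in the non-compact ambient $\Omega$.
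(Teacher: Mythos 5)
Your proof is correct and follows essentially the same approach as the paper: you establish the same two one-sided bounds (the $\liminf$ inequality on open sets directly from the definition of $\mathcal{F}$, and the $\limsup$ inequality on closed sets by complementation using the measure property and the hypothesis $\nu_i(\Omega)\to\nu(\Omega)$), after which the paper simply cites the standard characterization of weak-$\ast$ convergence of Radon measures from Evans--Gariepy, whereas you unpack that citation with an explicit layer-cake/Portmanteau argument. That added detail is fine but not a different route; the substantive content is identical.
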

\begin{proof}
For any open set $U\subset \Omega$, we have by the definition of the functional that
\begin{equation}\label{eq:weak convergence open set}
\mathcal F(u,U)\le \liminf_{i\to\infty}\int_U f(g_{u_i})\,d\mu.
\end{equation}
On the other hand, for any relatively closed set $F\subset\Omega$ we have
\begin{align*}
\mathcal F(u,\Omega) &= \limsup_{i\to\infty}\int_{\Omega}f(g_{u_i})\,d\mu\\
  &\ge \limsup_{i\to\infty}\int_{F}f(g_{u_i})\,d\mu+\liminf_{i\to\infty}\int_{\Omega\setminus F}f(g_{u_i})\,d\mu\\
  &\ge \limsup_{i\to\infty}\int_{F}f(g_{u_i})\,d\mu+\mathcal F(u,\Omega\setminus F).
\end{align*}
The last inequality follows from the definition of the functional, since $\Omega\setminus F$ is open. By the measure property of the functional, we can subtract $\mathcal F(u,\Omega\setminus F)$ from both sides to get
\[
\limsup_{i\to\infty}\int_{F}f(g_{u_i})\,d\mu\le \mathcal F(u,F).
\]
According to a standard characterization of the weak* convergence of Radon measures, the above inequality and \eqref{eq:weak convergence open set} together give the result \cite[p. 54]{EvGa}.
\end{proof}

\section{Integral representation}

In this section we study an integral representation for the functional $\mathcal F(u,\cdot)$.
First we show the estimate from below. Note that due to \eqref{eq:basic estimate for functional}, $\mathcal F(u,\Omega)<\infty$ always implies $\Vert Du\Vert(\Omega)<\infty$.
\begin{theorem}\label{thm:estimate from below}
Let $\Omega$ be an open set, and let $u\in L^1_{\loc}(\Omega)$ with $\mathcal F(u,\Omega)<\infty$. Let $d\Vert Du\Vert=a\,d\mu+d\,\Vert Du\Vert^s$ be the decomposition of the variation measure into the absolutely continuous and singular parts, where $a\in L^1(\Omega)$ is a Borel function and $\Vert Du\Vert^s$ is the singular part. Then we have
\[
\mathcal F(u,\Omega)\ge\int_{\Omega}f(a)\,d\mu+f_{\infty}\Vert Du\Vert^s(\Omega).
\]
\end{theorem}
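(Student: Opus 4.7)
The plan is to turn the global inequality into a pointwise estimate on infinitesimal balls, using Jensen's inequality and the weak$^*$ convergence from Proposition~\ref{prop:weak convergence}, and then to extract the absolutely continuous and singular contributions via Lebesgue differentiation of Radon measures in the doubling setting.

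First, fix an optimal recovery sequence $u_i\in\liploc(\Omega)$ with $u_i\to u$ in $L^1_{\loc}(\Omega)$ and $\int_{\Omega} f(g_{u_i})\,d\mu\to\mathcal F(u,\Omega)$, so that by Proposition~\ref{prop:weak convergence} the measures $f(g_{u_i})\,d\mu$ converge weakly$^*$ to $d\mathcal F(u,\cdot)$. Since $\mathcal F(u,\Omega)<\infty$, for every $x\in\Omega$ only countably many radii $r$ can satisfy $\mathcal F(u,\partial B(x,r))>0$, and for the remaining radii the weak$^*$ limit gives the equality $\mathcal F(u,B(x,r))=\lim_i\int_{B(x,r)} f(g_{u_i})\,d\mu$. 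Applying Jensen's inequality to the convex $f$, together with the lower semicontinuity of the total variation on open balls (built into the definition of $\|Du\|$) and the continuity and monotonicity of $f$, I obtain the key pointwise lower bound
\begin{equation*}
\mathcal F(u,B(x,r)) \ge \mu(B(x,r))\, f\!\left(\frac{\|Du\|(B(x,r))}{\mu(B(x,r))}\right)
\end{equation*}
for a dense collection of radii $r\downarrow 0$ at each $x\in\Omega$.

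Next, I would read off the two desired inequalities by Lebesgue differentiation, which is available since $\mu$ is doubling. At $\mu$-almost every $x$, $\|Du\|(B(x,r))/\mu(B(x,r))\to a(x)$ and $\mathcal F(u,B(x,r))/\mu(B(x,r))$ converges to the Radon--Nikodym derivative of $\mathcal F(u,\cdot)$ with respect to $\mu$; continuity of $f$ then forces this derivative to be at least $f(a(x))$ $\mu$-a.e. At $\|Du\|^s$-almost every $x$ one has $\mu(B(x,r))/\|Du\|^s(B(x,r))\to 0$ and $\|Du\|(B(x,r))/\|Du\|^s(B(x,r))\to 1$; setting $t_r=\|Du\|(B(x,r))/\mu(B(x,r))\to\infty$ and rearranging the pointwise bound gives
\begin{equation*}
\frac{\mathcal F(u,B(x,r))}{\|Du\|^s(B(x,r))} \ge \frac{f(t_r)}{t_r}\cdot\frac{\|Du\|(B(x,r))}{\|Du\|^s(B(x,r))} \longrightarrow f_\infty,
\end{equation*}
so the density of $\mathcal F(u,\cdot)$ with respect to $\|Du\|^s$ is at least $f_\infty$ at $\|Du\|^s$-a.e. point. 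Since $\mu$ and $\|Du\|^s$ are mutually singular, these two densities contribute to disjoint parts of $\Omega$, and adding them produces the desired inequality $\mathcal F(u,\Omega)\ge\int_\Omega f(a)\,d\mu + f_\infty\|Du\|^s(\Omega)$.

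The main obstacle I anticipate is the bookkeeping in the differentiation step: one must ensure that the shrinking sequence of admissible radii $r$ (those with $\mathcal F(u,\partial B(x,r))=0$) is compatible with the limits defining the Radon--Nikodym derivatives at $\mu$- and $\|Du\|^s$-a.e. point, which reduces to applying the Vitali-type covering theorem available in doubling spaces. No convex duality or equi-integrability argument is needed for the lower bound, since Jensen directly exchanges $f$ with averaging and the recession asymptotic $f(t)/t\to f_\infty$ cleanly absorbs the blow-up $t_r\to\infty$ on the singular set.
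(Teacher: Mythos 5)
The paper's own proof is a Reshetnyak-type lower semicontinuity argument: it writes $f(t)=\sup_j(d_jt+e_j)$ with $\sup_jd_j=f_\infty$, passes to the weak$^*$ limit $\nu\ge\Vert Du\Vert$ of $g_{u_i}\,d\mu$, and then invokes the purely measure-theoretic localization lemma \cite[Lemma 2.35]{AmbFP00} (supremum of disjoint-open-set integrals equals the integral of the pointwise supremum) to recover $\int f(a)\,d\mu+f_\infty\Vert Du\Vert^s$. Your plan is genuinely different: a Jensen inequality on balls plus Lebesgue differentiation. The Jensen step is fine. Using Proposition~\ref{prop:weak convergence}, for all but countably many $r$ one does have $\mathcal F(u,B(x,r))=\lim_i\int_{B(x,r)}f(g_{u_i})\,d\mu$, and by Jensen together with $\liminf_i\vint{B(x,r)}g_{u_i}\,d\mu\ge\Vert Du\Vert(B(x,r))/\mu(B(x,r))$ and the monotonicity and continuity of $f$ one gets the claimed pointwise bound; by taking good radii $r_k\uparrow r$ it even extends to all radii. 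The absolutely continuous estimate $d\mathcal F^a(u,\cdot)/d\mu\ge f(a)$ $\mu$-a.e.\ then follows from the Lebesgue--Radon--Nikodym differentiation theorem, which is available here because $\mu$ is doubling.

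The genuine gap is in the singular part. You differentiate $\mathcal F(u,\cdot)$, $\mu$, and $\Vert Du\Vert$ with respect to the (typically non-doubling) measure $\Vert Du\Vert^s$, i.e.\ you need statements of the form ``$\liminf_{r\to0}\mathcal F(u,B(x,r))/\Vert Du\Vert^s(B(x,r))\ge f_\infty$ at $\Vert Du\Vert^s$-a.e.\ $x$, hence $\mathcal F(u,A)\ge f_\infty\Vert Du\Vert^s(A)$.'' Passing from a lower density bound $\sigma$-a.e.\ to a measure inequality requires a Vitali- or Besicovitch-type covering theorem \emph{for the measure $\sigma=\Vert Du\Vert^s$}. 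In $\R^n$ Besicovitch supplies this, but in a general doubling metric measure space the Besicovitch covering theorem fails, and the Vitali covering theorem is available only with respect to the doubling measure $\mu$. The $5r$-covering lemma, which you gesture at, gives a disjoint subfamily $\{B_i\}$ with $A\subset\bigcup 5B_i$; one can then bound $\sum_i\alpha(B_i)\le\alpha(U)$ for the disjoint balls, but $\sigma(A)$ is controlled only by $\sum_i\sigma(5B_i)$ and there is no way to relate $\sigma(5B_i)$ to $\sigma(B_i)$ without doubling of $\sigma$. (Some auxiliary facts you want, such as $\mu(B)/\Vert Du\Vert^s(B)\to0$ at $\Vert Du\Vert^s$-a.e.\ $x$, can be salvaged by a scale-mismatch trick because the numerator is a $\mu$-absolutely continuous quantity and $\Vert Du\Vert^s$ lives on a $\mu$-null set --- but the crucial final step, turning the density lower bound on $\mathcal F(u,\cdot)$ into the measure inequality, does not go through by this route.) The obstacle you flag (compatibility of the ``good radii'' with the differentiation limits) is actually not a problem, since the differentiation limits, where they exist, exist along all $r\to0$; what you missed is the covering-theoretic one. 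This is precisely why the paper reaches for the convex dual representation of $f$ and \cite[Lemma 2.35]{AmbFP00}, which is a pure monotone-class argument and needs no covering theorem for $\Vert Du\Vert^s$. To repair your proof you would either need to establish such a covering/differentiation theorem for arbitrary Radon measures in this setting, or abandon the singular-differentiation step and borrow something like the paper's duality argument for the singular part.
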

\begin{proof}
Pick a sequence $u_i\in\liploc(\Omega)$ such that $u_i\to u$ in $L^1_{\loc}(\Omega)$ and
\begin{equation}\label{eq:choice of sequence}
\int_{\Omega}f(g_{u_i})\,d\mu\to \mF(u,\Omega)\quad \textrm{as}\ \ i\to\infty.
\end{equation}
Using the linear growth condition for $f$, presented in \eqref{eq:linear growth}, we estimate
\[
\limsup_{i\to\infty}\int_{\Omega} g_{u_i}\,d\mu\le \frac{1}{m}\limsup_{i\to \infty}\int_{\Omega} f(g_{u_i})\,d\mu< \infty. 
\]
Picking a suitable subsequence, which we still denote $g_{u_i}$, we have $g_{u_i}\,d\mu\overset{*}{\rightharpoonup}d\nu$ in $\Omega$, where $\nu$ is a Radon measure with finite mass in $\Omega$. Furthermore, by the definition of the variation measure, we necessarily have $\nu\ge \Vert Du\Vert$, which can be seen as follows. For any open set $U\subset \Omega$ and for any $\eps>0$, we can pick an open set $U'\Subset U$ such that $\Vert Du\Vert(U)<\Vert Du\Vert(U')+\eps$; see e.g. Lemma \ref{inner regularity lemma}. We obtain
\begin{align*}
\Vert Du\Vert(U)&<\Vert Du\Vert(U')+\eps\le \liminf_{i\to \infty} \int_{U'}g_{u_i}\,d\mu+\eps\\
&\le \limsup_{i\to \infty}\int_{\overline{U'}}g_{u_i}\,d\mu+\eps
\le \nu(\overline{U'})+\eps
\le \nu(U)+\eps.
\end{align*}
On the first line we used the definition of the variation measure, and on the second line we used a property of the weak* convergence of Radon measures, see e.g. \cite[Example 1.63]{AmbFP00}. By approximation we get $\nu(A)\ge \Vert Du\Vert(A)$ for any $A\subset \Omega$.

The following lower semicontinuity argument is from \cite[p. 64--66]{AmbFP00}. First we note that as a nonnegative nondecreasing convex function, $f$ can be presented as
\[
f(t)=\sup_{j\in \N}(d_jt+e_j),\quad t\ge 0,
\]
for some sequences $d_j,e_j\in\R$, with $d_j\ge 0$, $j=1,2,\ldots$, and furthermore $\sup_jd_j=f_{\infty}$ \cite[Proposition 2.31, Lemma 2.33]{AmbFP00}.
Given any pairwise disjoint open subsets of $\Omega$, denoted $A_1,\ldots,A_k$, $k\in\N$, and functions $\varphi_j\in C_c(A_j)$ with $0\le \varphi_j\le 1$, we have
\[
\int_{A_j}(d_jg_{u_i}+e_j)\varphi_j\,d\mu \le \int_{A_j}f(g_{u_i})\,d\mu.
\]
for every $j=1,\ldots,k$ and $i\in\N$.
Summing over $j$ and letting $i\to \infty$, we get by the weak* convergence $g_{u_i}\,d\mu\overset{*}{\rightharpoonup}d\nu$
\[
\sum_{j=1}^k\left(\int_{A_j}d_j\varphi_j \,d\nu +\int_{A_j}e_j\varphi_j\,d\mu\right) \le \liminf_{i\to\infty}\int_{\Omega} f(g_{u_i})\,d\mu.
\]
Since we had $\nu\ge \Vert Du\Vert$, this immediately implies
\[
\sum_{j=1}^k\left(\int_{A_j}d_j\varphi_j \,d\Vert Du\Vert +\int_{A_j}e_j\varphi_j\,d\mu\right) \le \liminf_{i\to\infty}\int_{\Omega} f(g_{u_i})\,d\mu.
\]
We recall that $d\Vert Du\Vert=a\,d\mu+d\Vert Du\Vert^s$. It is known that the singular part $\Vert Du \Vert^s$ is concentrated on a Borel set $D \subset \Omega$ that satisfies $\mu (D)=0$ and $\Vert Du\Vert^s (\Omega \setminus D)=0$, see e.g. \cite[p. 42]{EvGa}. Define the Radon measure $\sigma=\mu+\Vert Du\Vert^s$, and the Borel functions
\[
\phi_j=\begin{cases}
d_ja+e_j,                \quad & \text{on }\Omega\setminus D,\\
d_j,                        \quad & \text{on } D
\end{cases}
\]
for $j=1,\ldots,k$, and
\[
\phi=\begin{cases}
f(a),                       \quad & \text{on }\Omega\setminus D,\\
f_{\infty},                  \quad & \text{on } D.
\end{cases}
\]
As mentioned above, we now have $\sup_j\phi_j=\phi$, and we can write the previous inequality as
\[
\sum_{j=1}^k\int_{A_j}\phi_j\varphi_j\,d\sigma \le \liminf_{i\to\infty}\int_{\Omega} f(g_{u_i})\,d\mu.
\]
Since the functions $\varphi_j\in C_c(A_j)$, $0\le \varphi_j\le 1$, were arbitrary, we get
\[
\sum_{j=1}^k\int_{A_j}\phi_j\,d\sigma\le \liminf_{i\to\infty}\int_{\Omega} f(g_{u_i})\,d\mu.
\]
Since this holds for \emph{any} pairwise disjoint open subsets $A_1,\ldots,A_k\subset \Omega$, by \cite[Lemma 2.35]{AmbFP00} we get
\[
\int_{\Omega} \phi \,d\sigma\le \liminf_{i\to\infty}\int_{\Omega} f(g_{u_i})\,d\mu.
\]
However, by the definitions of $\phi$ and $\sigma$, this is the same as
\[
\int_{\Omega}f(a)\,d\mu+f_{\infty}\Vert Du\Vert^s(\Omega)\le \liminf_{i\to\infty}\int_{\Omega} f(g_{u_i})\,d\mu.
\]
Combining this with \eqref{eq:choice of sequence}, we get the desired estimate from below.
\end{proof}

It is worth noting that in the above argument, we only needed the weak* convergence of the sequence $g_{u_i}\,d\mu$ to a Radon measure that majorizes $\Vert Du\Vert$. Then we could use the fact that the functional for measures
\[
\nu \longmapsto \int_{\Omega}f(\check{a})\,d\mu+f_{\infty}\nu^s(\Omega),\quad\ d\nu=\check{a}\,d\mu+d\nu^s,
\]
is lower semicontinuous with respect to weak* convergence of Radon measures. This \emph{lower} semicontinuity is guaranteed by the fact that $f$ is convex, but in order to have \emph{upper} semicontinuity, we should have that $f$ is also concave (and thus linear).
Thus there is an important asymmetry in the setting, and for the estimate from above, we will need to use rather different methods where we prove weak or strong $L^1$-convergence for the sequence of upper gradients, instead of just weak* convergence of measures.
To achieve this type of stronger convergence, we need to specifically ensure that the sequence of upper gradients is \emph{equi-integrable}. The price that is paid is that a constant $C$ appears in the final estimate related to the absolutely continuous parts. An example that we provide later shows that this constant cannot be discarded.

We recall that for a $\mu$-measurable set $F\subset X$, the equi-integrability of a sequence of functions $g_i\in L^1(F)$, $i\in\N$, is defined by two conditions. First, for any $\eps>0$, there must be a $\mu$-measurable set $A\subset F$ with $\mu(A)<\infty$ such that 
\[
\int_{F\setminus A}g_i\,d\mu<\eps \quad\textrm{for all }i\in\N.
\]
Second, for any $\eps>0$ there must be $\delta>0$ such that if $A\subset F$ is $\mu$-measurable with $\mu(A)<\delta$, then
\[
\int_{A}g_i\,d\mu<\eps \quad\textrm{for all }i\in\N.
\]

We will need the following equi-integrability result that partially generalizes \cite[Lemma 6]{FHK}. For the construction of Whitney coverings that are needed in the result, see e.g. \cite[Theorem 3.1]{BBS07}.

\begin{lemma}\label{lem:generalized equiintegrability}
Let $\Omega\subset X$ be open, let $F\subset\Omega$ be $\mu$-measurable, and let $\nu$ be a Radon measure with finite mass in $\Omega$. Write the decomposition of $\nu$ into the absolutely continuous and singular parts with respect to $\mu$ as $d\nu=a\,d\mu+d\nu^s$, and assume that $\nu^s(F)=0$. Take a sequence of open sets $F_i$ such that $F\subset F_i\subset \Omega$ and $\nu^s(F_i)<1/i$, $i\in\N$. For a given $\tau\ge 1$ and every $i\in\N$, take a Whitney covering $\{B_j^i=B(x_j^i,r_j^i)\}_{j=1}^{\infty}$ of $F_i$ such that $r_j^i\le 1/i$ for every $j\in\N$, $\tau B_j^i\subset F_i$ for every $j\in\N$, every ball $\tau B_k^i$ meets at most $c_o=c_o(c_d,\tau)$ balls $\tau B_j^i$, and if $\tau B_j^i$ meets $\tau B_k^i$, then $r_j^i\le 2r_k^i$. Define the functions
\[
g_i=\sum_{j=1}^{\infty}\chi_{B_j^i}\frac{\nu(\tau B_j^i)}{\mu(B_j^i)},\quad\ i\in\N.
\]
Then the sequence $g_i$ is equi-integrable in $F$. Moreover, a subsequence of $g_i$ converges weakly in $L^1(F)$ to a function $\check{a}$ that satisfies $\check{a}\le c_oa$ $\mu$-almost everywhere in $F$.
\end{lemma}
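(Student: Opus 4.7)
The plan has three steps: establish equi-integrability together with an $L^1$-bound; extract a weakly $L^1$-convergent subsequence via Dunford--Pettis; and prove $\check{a}\le c_o a$ by testing against nonnegative $\phi\in C_c(\Omega)$.

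For the first step, the bounded overlap of $\{\tau B_j^i\}_j$ immediately gives $\int_F g_i\,d\mu \le \sum_j \nu(\tau B_j^i)\le c_o\nu(F_i)\le c_o\nu(\Omega)$, so $\{g_i\}$ is $L^1$-bounded. Writing $\nu(\tau B_j^i)=\int_{\tau B_j^i}a\,d\mu+\nu^s(\tau B_j^i)$, the singular contribution to any $\int_A g_i\,d\mu$ is at most $c_o\nu^s(F_i)<c_o/i$, which tends to zero. For the absolutely continuous contribution I would carry out the standard good/bad decomposition at a height $\lambda$: bad balls (those whose $a$-average exceeds $\lambda$) have the union of their $\tau$-dilates of $\mu$-measure at most $(c_o/\lambda)\|a\|_{L^1(\Omega)}$, so absolute continuity of $a\,d\mu$ makes the bad contribution uniformly small once $\lambda$ is large, while the good balls contribute at most $C\lambda\mu(A)$, which is small when $\mu(A)<\delta$. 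The ``no mass at infinity'' condition follows from inner regularity of $\nu$: pick compact $K\subset\Omega$ with $\nu(\Omega\setminus K)<\eps/c_o$ and a bounded open $A\supset K$ with $\dist(K,X\setminus A)>0$; for $i$ so large that $(\tau+1)/i<\dist(K,X\setminus A)$, any $B_j^i$ meeting $F\setminus A$ satisfies $\tau B_j^i\subset\Omega\setminus K$, hence $\int_{F\setminus A}g_i\,d\mu\le c_o\nu(\Omega\setminus K)<\eps$.

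Dunford--Pettis then delivers a subsequence $g_{i_k}\rightharpoonup\check{a}$ in $L^1(F)$. To obtain $\check{a}\le c_o a$ $\mu$-a.e.\ in $F$ I would verify the measure-level inequality
\[
\int_F \phi\,\check{a}\,d\mu \le c_o\int_\Omega \phi\,a\,d\mu
\quad\text{for every nonnegative } \phi\in C_c(\Omega),
\]
from which the pointwise bound follows by Radon--Nikodym. Using $\int_{F\cap B_j^i}\phi\,d\mu\le(\sup_{B_j^i}\phi)\mu(B_j^i)$, the expression $\int_F \phi g_i\,d\mu$ is dominated by $\sum_j \nu(\tau B_j^i)\sup_{B_j^i}\phi$; its singular part is $\le\|\phi\|_\infty c_o\nu^s(F_i)\to 0$, while swapping the sum with the integral against $a$ converts the absolutely continuous part into
\[
\int_\Omega a(y)\Bigl(\sum_j \chi_{\tau B_j^i}(y)\,\sup_{B_j^i}\phi\Bigr)\,d\mu(y).
\]
For $y\in\tau B_j^i$ we have $B_j^i\subset B(y,(\tau+1)/i)$, so uniform continuity of $\phi$ gives $\sup_{B_j^i}\phi\le \phi(y)+\omega_\phi((\tau+1)/i)$; combined with $\sum_j\chi_{\tau B_j^i}\le c_o$, this produces the prefactor $c_o$. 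Passing to the limit $i\to\infty$ kills the error term and gives the required inequality.

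The main obstacle is pinning down the constant $c_o$: Dunford--Pettis by itself provides no quantitative control of the density $\check{a}$. The decisive move is the reordering in the last display, which converts bounded overlap of the $\tau$-dilates into a pointwise bound in the integration variable $y$; uniform continuity of the test function together with $r_j^i\le 1/i$ then ensures that replacing $\sup_{B_j^i}\phi$ by $\phi(y)$ introduces only a vanishing error.
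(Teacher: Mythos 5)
Your proposal is correct and follows the same high-level architecture as the paper's proof (equi-integrability, then Dunford--Pettis, then the pointwise bound), but both nontrivial steps are carried out by genuinely different arguments, each of which is valid.

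For the small-set part of equi-integrability, the paper argues by contradiction and splits the indices $j$ according to the \emph{relative density} $\mu(A_i\cap B_j^i)/\mu(B_j^i)$ (larger or smaller than $1/k$), whereas you do a direct good/bad decomposition at a height $\lambda$ on the averages $\int_{\tau B_j^i}a\,d\mu\big/\mu(B_j^i)$. Your split is a Calder\'on--Zygmund-style argument and the paper's is a density-of-$A$ argument; both exploit bounded overlap and absolute continuity of $a\,d\mu$ in the same way and both work. For the bound $\check a\le c_o a$, the paper tests against indicator functions of balls $B(x,\widetilde r)\cap F$, sends $\widetilde r\nearrow r$ to obtain $\int_{B(x,r)\cap F}\check a\,d\mu\le c_o\nu(B(x,r))$, and differentiates via Radon--Nikodym. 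You instead test against nonnegative $\phi\in C_c(\Omega)$ and perform a Tonelli reordering, turning $\sum_j\bigl(\int_{\tau B_j^i}a\,d\mu\bigr)\sup_{B_j^i}\phi$ into $\int a(y)\sum_j\chi_{\tau B_j^i}(y)\sup_{B_j^i}\phi\,d\mu(y)$, after which bounded overlap gives the factor $c_o$ and uniform continuity of $\phi$, together with $r_j^i\le 1/i\to 0$, kills the error. Your route avoids differentiation of measures but requires continuity of the test function; the paper's route is marginally shorter since it needs no modulus of continuity. The paper also proves the measure inequality $\int_{B(x,r)\cap F}\check a\,d\mu\le c_o\nu(B(x,r))$ before differentiating, which is the same in spirit as your $\int_F\phi\check a\,d\mu\le c_o\int_\Omega\phi a\,d\mu$, just dualized against balls rather than against $C_c$ functions.

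One small technical point: your no-mass-at-infinity argument only establishes the bound $\int_{F\setminus A}g_i\,d\mu<\eps$ for $i$ large enough that $(\tau+1)/i<\dist(K,X\setminus A)$, and similarly the singular term $c_o\nu^s(F_i)<c_o/i$ in the small-set step is only small for large $i$. Since equi-integrability requires uniformity over \emph{all} $i$, you should add the (trivial) observation that each individual $g_i$ lies in $L^1(F)$, so the finitely many remaining indices can be accommodated by enlarging $A$ or shrinking $\delta$. The paper sidesteps this for the first condition by choosing $A=B(x_0,R+2\tau)$, which contains a margin $2\tau$ that absorbs the worst case $r_j^i\le 1$, and for the second condition by its contradiction setup; but your version is just as easily repaired.
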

\begin{remark}
If the measure $\nu$ is absolutely continuous in the whole of $\Omega$, then we can choose $F=F_i=\Omega$ for all $i\in\N$.
\end{remark}
\begin{proof}
To check the first condition of equi-integrability, let $\eps>0$ and take a ball $B=B(x_0,R)$ with $x_0\in X$ and $R>0$ so large that $\nu(\Omega\setminus B(x_0,R))<\eps/c_o$. Then, by the bounded overlap property of the Whitney balls, we have
\[
\int_{F\setminus B(x_0,R+2\tau)}g_i\,d\mu\le c_o\nu(F_i\setminus B(x_0,R))<\eps
\]
for all $i\in\N$.

To check the second condition, assume by contradiction that there is a sequence of $\mu$-measurable sets $A_i\subset F$ with $\mu(A_i)\to 0$, and $\int_{A_i}g_i\,d\mu>\eta>0$ for all $i\in\N$. Fix $\eps>0$. We know that there is $\delta>0$ such that if $A\subset \Omega$ and $\mu(A)<\delta$, then $\int_Aa\,d\mu<\eps$. Note that by the bounded overlap property of the Whitney balls, we have for every $i\in\N$
\begin{equation}\label{eq:equiintegrability first estimate}
\begin{split}
\int_{A_i}g_i\,d\mu&= \sum_{j=1}^{\infty}\frac{\mu(A_i\cap B_j^i)}{\mu(B_j^i)}\nu(\tau B_j^i)\\
& \le c_o\nu^s(F_i)+\sum_{j=1}^{\infty}\frac{\mu(A_i\cap B_j^i)}{\mu(B_j^i)}\int_{\tau B_j^i}a\,d\mu.
\end{split}
\end{equation}
Fix $k\in\N$. We can divide the above sum into two parts: let $I_1$ consist of those indices $j\in\N$ for which $\mu(A_i\cap B_j^i)/\mu(B_j^i)>1/k$, and let $I_2$ consist of the remaining indices. We estimate
\[
\mu\left(\bigcup_{j\in I_1}\tau B_j^i\right)\le C\sum_{j\in I_1}\mu(B_j^i)\le Ck\sum_{j\in I_1}\mu(A_i\cap B_j^i)\le Ck\mu(A_i)<\delta,
\]
when $i$ is large enough. Now we can further estimate \eqref{eq:equiintegrability first estimate}:
\[
\int_{A_i}g_i\,d\mu\le c_o\nu^s(F_i)+\frac{c_o}{k}\int_{F_i}a\,d\mu+c_o\eps
\]
for large enough $i\in\N$. By letting first $i\to \infty$, then $k\to \infty$, and finally $\eps\to 0$, we get a contradiction with $\int_{A_i}g_i\,d\mu>\eta>0$, proving the equi-integrability.

Finally, let us prove the weak convergence in $L^1(F)$. Possibly by taking a subsequence which we still denote $g_i$, we have $g_i\to \check{a}$ weakly in $L^1(F)$ for some $\check{a}\in L^1(F)$, by the Dunford-Pettis theorem (see e.g. \cite[Theorem 1.38]{AmbFP00}). By this weak convergence and the bounded overlap property of the Whitney balls, we can estimate for any $x\in F$ and $0<\widetilde{r}<r$
\begin{align*}
\int_{B(x,\widetilde{r})\cap F}\check{a}\,d\mu&=\limsup_{i\to\infty}\int_{B(x,\widetilde{r})\cap F}g_i\,d\mu\\
&=\limsup_{i\to\infty}\sum_{j=1}^{\infty}\frac{\mu(B_j^i\cap B(x,\widetilde{r})\cap F)}{\mu(B_j^i)}\nu(\tau B_j^i)\\
&\le \limsup_{i\to\infty}\sum_{j\in\N:\,B_j^i\cap B(x,\widetilde{r})\cap F\ne \emptyset}\nu(\tau B_j^i)\\
&\le \limsup_{i\to\infty}c_o\nu(B(x,r)).
\end{align*}
By letting $\widetilde{r}\nearrow r$, we get
\[
\int_{B(x,r)\cap F}\check{a}\,d\mu\le c_o\nu(B(x,r)).
\]
By the Radon-Nikodym theorem, $\mu$-almost every $x\in F$ satisfies
\[
\lim_{r\to 0}\,\vint{B(x,r)\cap F}\check{a}\,d\mu=\check{a}(x)\quad\textrm{and}\quad\lim_{r\to 0}\frac{\nu^s(B(x,r))}{\mu(B(x,r))}=0.
\]
By using these estimates as well as the previous one, we get for $\mu$-almost every $x\in F$
\[
\begin{split}
\check{a}(x)&=\lim_{r\to 0}\,\vint{B(x,r)\cap F}\check{a}\,d\mu \\
&\le c_o\limsup_{r\to 0}\,\vint{B(x,r)}a\,d\mu+c_o\limsup_{r\to 0}\frac{\nu^s(B(x,r))}{\mu(B(x,r))},
\end{split}
\]
where the first term on the right-hand side is $c_oa$ by the Radon-Nikodym theorem, and the second term is zero. Thus we have $\check{a}\le c_oa$ $\mu$-almost everywhere in $F$.
\end{proof}

Now we are ready to prove the estimate from above.

\begin{theorem}\label{thm:estimate from above}
Let $\Omega$ be an open set, and let $u\in L^1_{\loc}(\Omega)$ with $\mathcal F(u,\Omega)<\infty$. Let $d\Vert Du\Vert=a\,d\mu+d\,\Vert Du\Vert^s$ be the decomposition of the variation measure, where $a\in L^1(\Omega)$ and $\Vert Du\Vert^s$ is the singular part. Then we have
\[
\mathcal F(u,\Omega)\le \int_{\Omega}f(Ca)\,d\mu+f_{\infty}\Vert Du\Vert^s(\Omega),
\]
with $C=C(c_d,c_P,\lambda)$.
\end{theorem}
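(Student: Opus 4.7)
The plan is to construct a recovery sequence by handling the singular and absolutely continuous parts of $\Vert Du\Vert$ with \emph{different} approximations, which are then glued via Lemma~\ref{joining lemma}. Fix $\eps > 0$. Since $\Vert Du\Vert^s$ concentrates on a Borel set $D$ with $\mu(D)=0$, regularity produces a compact $K \subset D$ with $\Vert Du\Vert^s(D \setminus K) < \eps$, and then an open neighborhood $V \supset K$ with $\mu(V) < \eps$ and (by absolute continuity) $\int_V a\,d\mu < \eps$. Set $W = \Omega \setminus K$, so $V \cup W = \Omega$ and $\Vert Du\Vert^s(W) < \eps$. On $V$ I will build an approximation close to the BV-definitional infimum, while on $W$ I will exploit Lemma~\ref{lem:generalized equiintegrability} for a discrete convolution.

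On $V$, by the definition of the variation measure I pick $v_i \in \liploc(V)$ with $v_i \to u$ in $L^1_{\loc}(V)$ and $\int_V g_{v_i}\,d\mu \to \Vert Du\Vert(V) \le \Vert Du\Vert^s(\Omega) + \eps$. Inequality~\eqref{eq:estimate for f} then yields
\[
\limsup_{i \to \infty} \int_V f(g_{v_i})\,d\mu \le f(0)\,\mu(V) + f_\infty \Vert Du\Vert(V) \le (f(0)+f_\infty)\eps + f_\infty \Vert Du\Vert^s(\Omega).
\]
On $W$, I take the Whitney covering $\{B_j^i\}$ of $W$ at scale $1/i$ supplied by Lemma~\ref{lem:generalized equiintegrability} (applied with $\nu = \Vert Du\Vert$, $F = \Omega \setminus D$, $F_i = W$, $\tau = 5\lambda$), together with a subordinate Lipschitz partition of unity $\{\varphi_j^i\}$ satisfying $\Lip(\varphi_j^i) \le C/r_j^i$. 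Define the discrete convolution
\[
u_i^W := \sum_j \varphi_j^i\, u_{B_j^i} \in \liploc(W),
\]
which by a standard telescoping Poincar\'e argument satisfies $u_i^W \to u$ in $L^1_{\loc}(W)$ and the upper gradient bound
\[
g_{u_i^W} \le C_1\, g_i^W, \qquad g_i^W := \sum_j \chi_{B_j^i}\frac{\Vert Du\Vert(\tau B_j^i)}{\mu(B_j^i)},
\]
with $C_1 = C_1(c_d, c_P, \lambda)$.

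By Lemma~\ref{lem:generalized equiintegrability}, $\{g_i^W\}$ is equi-integrable and, along a subsequence, converges weakly in $L^1(F)$ to some $\check a \le c_o a$. This is where the asymmetry flagged after Theorem~\ref{thm:estimate from below} bites: weak convergence only yields lower semicontinuity of $\int f(\cdot)\,d\mu$, which is the wrong direction. I resolve this by invoking Mazur's lemma to produce convex combinations $\widehat u_i^W$ of the tail $\{u_j^W\}_{j \ge i}$ whose corresponding convex combinations $\widehat g_i^W$ of $\{g_j^W\}$ converge \emph{strongly} to $\check a$ in $L^1(F)$, hence a.e.\ along a further subsequence. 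Convex combinations preserve equi-integrability, and the subadditivity of upper gradients gives $g_{\widehat u_i^W} \le C_1 \widehat g_i^W$. Since $f$ is continuous with linear growth, Vitali's convergence theorem implies
\[
\limsup_{i \to \infty} \int_W f(g_{\widehat u_i^W})\,d\mu \le \int_W f(C_1 \check a)\,d\mu \le \int_\Omega f(Ca)\,d\mu,
\]
where $C := c_o C_1 = C(c_d, c_P, \lambda)$ is the constant of the theorem.

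Finally, I glue $\widehat u_i^W$ and $v_i$ using Lemma~\ref{joining lemma}: after localizing to a compact exhaustion $G_n \Subset \Omega$ and choosing $U' \Subset W$ containing $(\Omega\setminus V) \cap G_n$, the lemma produces $w_i \in \liploc(\Omega)$ with $w_i \to u$ in $L^1_{\loc}(\Omega)$ and
\[
\int_\Omega f(g_{w_i})\,d\mu \le \int_W f(g_{\widehat u_i^W})\,d\mu + \int_V f(g_{v_i})\,d\mu + C'\int_H |\widehat u_i^W - v_i|\,d\mu + \tfrac{1}{i}.
\]
The cross term vanishes because $H$ is compactly contained and both approximations converge to $u$ in $L^1_{\loc}$. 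Sending $i \to \infty$, then $n \to \infty$ via inner regularity (Lemma~\ref{inner regularity lemma}), and finally $\eps \to 0$ yields the claim. The hard part, as anticipated, is the passage from the natural weak convergence of upper gradients to a quantitative upper bound on $\int f(g_{u_i})$: both the equi-integrability of Lemma~\ref{lem:generalized equiintegrability} and Mazur's lemma are essential, and the constant $C$ enters precisely as the combined price of the Whitney overlap $c_o$ and the Poincar\'e-telescoping constant $C_1$.
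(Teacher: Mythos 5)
Your route is genuinely different from the paper's: rather than exploiting the already-established measure property of $\mathcal F(u,\cdot)$ (Theorem~\ref{thm:measure_prop}) to split $\mathcal F=\mathcal F^a+\mathcal F^s$ and estimate each piece by an infimum over open neighborhoods of $\Omega\setminus D$ resp.\ $D$, you construct a single recovery sequence on $\Omega$ by gluing an approximation near the singular concentration set with a discrete convolution on the rest, via Lemma~\ref{joining lemma}. This is a reasonable alternative, but it has a gap.

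The problem is the application of Lemma~\ref{lem:generalized equiintegrability} to the full sequence
\[
g_i^W = \sum_j \chi_{B_j^i}\frac{\Vert Du\Vert(\tau B_j^i)}{\mu(B_j^i)}
\]
with $\nu=\Vert Du\Vert$ and $F_i=W$ for every $i$. The lemma requires $\nu^s(F_i)<1/i$, whereas you only have $\Vert Du\Vert^s(W)<\eps$ with $\eps$ fixed (positive in general, since $K$ is only a compact approximation of $D$ from inside). The conclusion you draw --- that $\{g_i^W\}$ is equi-integrable --- is actually false in this generality: if $\Vert Du\Vert^s$ restricted to $W$ is a nonzero purely atomic measure, the Whitney balls at scale $1/i$ surrounding the atoms carry a fixed amount of $\Vert Du\Vert^s$-mass on sets of $\mu$-measure tending to zero, so the second defining condition of equi-integrability fails. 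The subsequent Mazur/Vitali step therefore does not apply to $g_i^W$ as written.

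The fix is the one the paper makes: decompose $g_i^W=(g_i^W)^a+(g_i^W)^s$ according to $d\Vert Du\Vert=a\,d\mu+d\Vert Du\Vert^s$, apply Lemma~\ref{lem:generalized equiintegrability} only to $(g_i^W)^a$ with the absolutely continuous measure $a\,d\mu$ (so $F=F_i=W$ is then legitimate by the remark after the lemma), and control the singular contribution separately by $\int_W (g_i^W)^s\,d\mu\le\widetilde C\,\Vert Du\Vert^s(W)<\widetilde C\eps$ together with the $L$-Lipschitz continuity of $f$, which produces an $L\widetilde C\eps$ error that vanishes as $\eps\to 0$. With this correction your gluing argument goes through, but at that point it largely reproduces the paper's estimate on each of two open sets, and the gluing is doing the work that Theorem~\ref{thm:measure_prop} does more cleanly; also note that once $\widehat{g_i^W}\to\check a$ strongly in $L^1$ and $f$ is Lipschitz, the passage to the limit is immediate and does not really require Vitali's theorem.
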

\begin{proof}
Since the functional $\mathcal F(u,\cdot)$ is a Radon measure by Theorem \ref{thm:measure_prop}, we can decompose it into the absolutely continuous and singular parts as $\mathcal F(u,\cdot)=\mathcal F^a(u,\cdot)+\mathcal F^s(u,\cdot)$. Again, the singular parts $\Vert Du \Vert^s$ and $\mF^s(u,\cdot)$ are concentrated on a Borel set $D \subset \Omega$ that satisfies $\mu (D)=0$ and 
\[
\Vert Du\Vert^s (\Omega \setminus D)=0=\mF^s(u,\Omega \setminus D),
\] 
see e.g. \cite[p. 42]{EvGa}.

First we prove the estimate for the singular part. Let $\eps >0$. Choose an open set $G$ with $D \subset G \subset \Omega$, such that $\mu (G)<\eps$ and $\Vert Du \Vert (G) < \Vert Du \Vert (D)+\eps$. Take a sequence $u_i \in \liploc(G)$ such that $u_i \to u$ in $L^1_{\loc}(G)$ and
\[
\int_G g_{u_i}\,d\mu \to \Vert Du\Vert (G) \quad\textrm{as}\ \ i\to\infty.
\]
Thus for some $i\in\N$ large enough, we have
\[
\int_G g_{u_i}\,d\mu < \Vert Du\Vert (G)+\eps
\]
and
\[
\mF(u,G)< \int_G f(g_{u_i}) \, d\mu +\eps.
\]
The last inequality necessarily holds for large enough $i$ by the definition of the functional $\mF(u,\cdot)$. Now, using the two inequalities above and the estimate for $f$ given in \eqref{eq:estimate for f}, we can estimate
\begin{align*}
\mF(u,D) & \le \mF(u,G) 
          \le \int_G f(g_{u_i}) \, d\mu +\eps \\
         & \le \int_G f(0)\, d\mu+f_{\infty}\int_G g_{u_i}\, d\mu +\eps \\
         & \le f(0)\mu(G)+f_{\infty}\Vert Du\Vert (G) +f_{\infty}\eps+\eps \\
         & \le f(0)\eps +f_{\infty}( \Vert Du\Vert (D)+\eps) +f_{\infty}\eps+\eps.
\end{align*}
In the last inequality we used the properties of the set $G$ given earlier. Letting $\eps \to 0$, we get the estimate from above for the singular part, i.e.
\begin{equation}\label{eq:estimate from above singular}
\mF^s(u,\Omega)=\mF(u,D)\le f_{\infty}\Vert Du\Vert(D)= f_{\infty}\Vert Du\Vert^s (\Omega).
\end{equation}

Next let us consider the absolutely continuous part.
Let $D$ be defined as above, and let $F=\Omega \setminus D$. Let $\eps >0$. Take an open set $G$ such that $F\subset G\subset \Omega$, and $\Vert Du\Vert (G)<\Vert Du\Vert (F)+\eps$.

For every $i\in\N$, take a Whitney covering $\{B_j^i=B(x_j^i,r_j^i)\}_{j=1}^{\infty}$ of $G$ s.t. $r_j^i\le 1/i$ for every $j\in\N$, $5\lambda B_j^i\subset G$ for every $j\in\N$, every ball $5\lambda B_k^i$ meets at most $C=C(c_d,\lambda)$ balls $5\lambda B_j^i$, and if $5\lambda B_j^i$ meets $5\lambda B_k^i$, then $r_j^i\le 2r_k^i$. Then take a partition of unity $\{\phi_j^i\}_{j=1}^{\infty}$ subordinate to this cover, such that $0\le \phi_j^i\le 1$, each $\phi_j^i$ is a $C(c_d)i$-Lipschitz function, and $\supp(\phi_j^i)\subset 2B_j^i$ for every $j\in\N$ (see e.g. \cite[Theorem 3.4]{BBS07}). Define discrete convolutions with respect to the Whitney coverings by
\[
u_i=\sum_{j=1}^{\infty}u_{B_j^i}\phi_j^i,\quad\ i\in\N.
\]
We know that $u_i\to u$ in $L^1(G)$ as $i\to\infty$, and that each $u_i$ has an upper gradient
\[
g_i=C\sum_{j=1}^{\infty}\chi_{B_j^i}\frac{\Vert Du\Vert(5\lambda B_j^i)}{\mu(B_j^i)}
\]
with $C=C(c_d,c_P)$, see e.g. the proof of \cite[Proposition 4.1]{KKST}. We can of course write the decomposition $g_i=g_i^a+g_i^s$, where
\[
g_i^a=C\sum_{j=1}^{\infty}\chi_{B_j^i}\frac{\int_{5\lambda B_j^i}a\,d\mu}{\mu(B_j^i)}
\]
and
\[
g_i^s=C\sum_{j=1}^{\infty}\chi_{B_j^i}\frac{\Vert Du\Vert^s(5\lambda B_j^i)}{\mu(B_j^i)}.
\]
By the bounded overlap property of the coverings, we can easily estimate
\begin{equation}\label{eq:mass of singular parts}
\int_Gg_i^s\,d\mu\le \widetilde{C}\Vert Du\Vert^s(G)<\widetilde{C}\eps
\end{equation}
for every $i\in\N$, with $\widetilde{C}=\widetilde{C}(c_d,c_P,\lambda)$. Furthermore, by Lemma \ref{lem:generalized equiintegrability} we know that the sequence $g_i^a$ is equi-integrable and that a subsequence, which we still denote $g_i^a$, converges weakly in $L^1(G)$ to a function $\check{a}\le Ca$, with $C=C(c_d,\lambda)$.
By Mazur's lemma we have for certain convex combinations, denoted by a hat,
\[
\widehat{g_i^a}=\sum_{j=i}^{N_i}d_{i,j}g_j^a \to \check{a}\quad\textrm{in}\ L^1(G) \ \textrm{as}\ i\to\infty,
\]
where $d_{i,j}\ge 0$ and $\sum_{j=i}^{N_i}d_{i,j}=1$ for every $i\in \N$ \cite[Theorem 3.12]{Rud}. We note that $\widehat{u_i}\in\liploc(G)$  for every $i\in\N$ (the hat always means that we take the same convex combinations), $\widehat{u_i}\to u$ in $L^1_{\loc}(G)$, and $g_{\widehat{u_i}}\le \widehat{g_i}$ $\mu$-almost everywhere for every $i\in \N$ (recall that $g_u$ always means the minimal $1$-weak upper gradient of $u$).
Using the definition of $\mF(u,\cdot)$, the fact that $f$ is $L$-Lipschitz, and \eqref{eq:mass of singular parts}, we get
\begin{align*}
\mF(u,F)&\le \mF(u,G)
        \le \liminf_{i\to \infty} \int_G f(g_{\widehat{u_i}})\,d\mu\\
        &\le \liminf_{i\to \infty}\int_Gf(\widehat{g_i})\,d\mu
        \le \liminf_{i\to \infty}\left( \int_Gf(\widehat{g_i^a})\,d\mu+\int_GL\widehat{g_i^s}\,d\mu\right)\\
        &\le \liminf_{i\to \infty}\left( \int_Gf(\widehat{g_i^a})\,d\mu+L\widetilde{C}\eps\right)
         =  \int_Gf(\check{a})\,d\mu+L\widetilde{C}\eps\\
        &\le \int_Gf(Ca)\,d\mu+L\widetilde{C}\eps
        \le \int_{\Omega}f(Ca)\,d\mu+L\widetilde{C}\eps.
\end{align*}
By letting $\eps \to 0$ we get the estimate from above for the absolutely continuous part, i.e.
\[
\mF^a(u,\Omega)= \mF(u,F)\le \int_{\Omega}f(Ca)\,d\mu.
\]
By combining this with \eqref{eq:estimate from above singular}, we get the desired estimate from above.
\end{proof}

\begin{remark}\label{rem:integral representation}
By using Theorems \ref{thm:estimate from below} and \ref{thm:estimate from above}, as well as the definition of the functional for general sets given in \eqref{eq:def of F for general sets}, we can conclude that for any $\mu$-measurable set $A\subset\Omega\subset X$ with $\mathcal F(u,\Omega)<\infty$, we have
\[
\mathcal F^s(u,A)=f_{\infty}\Vert Du\Vert^s(A)
\]
and
\[
\int_{A}f(a)\,d\mu\le \mathcal F^a(u,A)\le \int_{A}f(Ca)\,d\mu,
\]
where $\mathcal F^a(u,\cdot)$ and $\mathcal F^s(u,\cdot)$ are again the absolutely continuous and singular parts of the measure given by the functional.
\end{remark}

Since locally Lipschitz functions are dense in the Newtonian space $N^{1,1}(\Omega)$ with $\Omega$ open \cite[Theorem 5.47]{BB}, from the definition of total variation we know that if $u\in N^{1,1}(\Omega)$, then $u\in\BV(\Omega)$ with $\Vert Du\Vert$ absolutely continuous, and more precisely
\[
\Vert Du\Vert(\Omega)\le \int_{\Omega}g_u\,d\mu.
\]
We obtain, to some extent as a by-product of the latter part of the proof of the previous theorem, the following converse, which also answers a question posed in \cite{KKST}. A later example will show that the constant $C$ is necessary here as well.

\begin{theorem}\label{thm:min upper gradient and variation}
Let $\Omega\subset X$ be an open set, let $u\in\BV(\Omega)$, and let $d\Vert Du\Vert=a\,d\mu+d\Vert Du\Vert^s$ be the decomposition of the variation measure, where $a\in L^1(\Omega)$ and $\Vert Du\Vert^s$ is the singular part. Let $F\subset \Omega$ be a $\mu$-measurable set for which $\Vert Du\Vert^s(F)=0$. Then, by modifying $u$ on a set of $\mu$-measure zero if necessary, we have $u|_F\in N^{1,1}(F)$  and $g_u\le Ca$  $\mu$-almost everywhere in $F$, with $C=C(c_d,c_P,\lambda)$.
\end{theorem}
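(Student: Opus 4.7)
The plan is to re-run the discrete-convolution machinery from the proof of Theorem \ref{thm:estimate from above}, but rather than integrate $f$ against the limiting upper gradient, extract a $1$-weak upper gradient of (a suitable representative of) $u$ on $F$ whose size is controlled pointwise by $Ca$. The source of the constant $C$ will again be Lemma \ref{lem:generalized equiintegrability}.

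\textbf{Step 1 (setup).} Since $\Vert Du\Vert^s$ is a finite Borel measure on $\Omega$ with $\Vert Du\Vert^s(F)=0$, outer regularity yields a decreasing sequence of open sets $F\subset G_k\subset \Omega$ with $\Vert Du\Vert^s(G_k)<1/k$. For each $k$, fix a Whitney cover $\{B_j^k\}$ of $G_k$ with $5\lambda B_j^k\subset G_k$ and radii $\le 1/k$, a subordinate Lipschitz partition of unity $\{\phi_j^k\}$, and form the discrete convolution
\[
u_k=\sum_{j=1}^{\infty}u_{B_j^k}\phi_j^k\in \liploc(G_k).
\]
As in the proof of Theorem \ref{thm:estimate from above}, $u_k\to u$ in $L^1_\loc(G_1)$ and $u_k$ admits the upper gradient $g_k=g_k^a+g_k^s$ in $G_k$, with $\Vert g_k^s\Vert_{L^1(G_k)}\le \widetilde C\,\Vert Du\Vert^s(G_k)<\widetilde C/k$ by bounded overlap.

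\textbf{Step 2 (weak to strong $L^1$).} Apply Lemma \ref{lem:generalized equiintegrability} with $\nu=\Vert Du\Vert$ and $\tau=5\lambda$: passing to a subsequence, $g_k^a\rightharpoonup \check a$ weakly in $L^1(F)$ with $\check a\le Ca$ $\mu$-almost everywhere in $F$. Since $\Vert g_k^s\Vert_{L^1(F)}\to 0$, also $g_k\rightharpoonup \check a$ weakly in $L^1(F)$. By Mazur's lemma, choose convex combinations $\widehat{u_k}\in \liploc(G_k)$ with upper gradients $\widehat{g_k}$ such that $\widehat{u_k}\to u$ in $L^1_\loc(G_1)$ and $\widehat{g_k}\to \check a$ strongly in $L^1(F)$; then extract a further subsequence along which $\widehat{u_k}\to u$ $\mu$-almost everywhere in $F$.

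\textbf{Step 3 (upper gradient inequality).} The main obstacle is to upgrade the strong $L^1$ convergence of upper gradients and the $\mu$-almost everywhere convergence of the $\widehat{u_k}$ into a genuine upper gradient inequality along curves lying in $F$. By Fuglede's lemma, $\int_\gamma \widehat{g_k}\,ds\to \int_\gamma \check a\,ds$ for $1$-almost every rectifiable curve $\gamma$ in $G_1$, and on such curves contained in $F$ the equi-integrability of $\{\widehat{g_k}\}$ guarantees that a subsequence of the absolutely continuous maps $s\mapsto \widehat{u_k}(\gamma(s))$ converges uniformly. Redefining $u$ on a $\mu$-null set to equal the pointwise limit of $\widehat{u_k}$ wherever it exists, we pass to the limit in $|\widehat{u_k}(x)-\widehat{u_k}(y)|\le \int_\gamma \widehat{g_k}\,ds$ to obtain
\[
|u(x)-u(y)|\le \int_\gamma \check a\,ds\le \int_\gamma Ca\,ds
\]
for $1$-almost every curve $\gamma$ in $F$. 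Since $u\in L^1(F)$ (from $u\in\BV(\Omega)$) and $Ca\in L^1(F)$, this identifies $Ca$ as a $1$-weak upper gradient of $u|_F$; hence $u|_F\in N^{1,1}(F)$ with $g_u\le Ca$ $\mu$-almost everywhere in $F$, as desired.
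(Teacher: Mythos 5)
Your proposal is correct and follows essentially the same route as the paper: the same discrete-convolution construction on open neighborhoods $G_k$ of $F$, the same appeal to Lemma~\ref{lem:generalized equiintegrability} to obtain weak $L^1$-convergence of $g_k^a$ to some $\check a\le Ca$, and then Mazur's lemma plus Fuglede's lemma to produce the $1$-weak upper gradient of a suitable representative of $u$. The paper compresses your Steps~2--3 into a single citation of \cite[Lemma 7.8]{Hj}, and the remark following the theorem even points out that that lemma is itself proved via Mazur's lemma, so the methods coincide; you have simply unfolded the black box.

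The one place where the wording could mislead a reader is in Step~3: the equi-absolute continuity of the maps $s\mapsto\widehat{u_k}(\gamma(s))$ that you need for Arzel\`a--Ascoli does \emph{not} come from the $\mu$-equi-integrability of $\{\widehat{g_k}\}$ established in Lemma~\ref{lem:generalized equiintegrability}. That equi-integrability is with respect to $\mu$ on $F$ and says nothing directly about arc-length integrals along a fixed curve. What actually does the work is the strong form of Fuglede's lemma: since $\widehat{g_k}\to\check a$ in $L^1(F,\mu)$, after passing to a subsequence one has, for $1$-almost every curve $\gamma$ in $F$,
\[
\int_\gamma|\widehat{g_k}-\check a|\,ds\to 0,
\]
i.e.\ $\widehat{g_k}\circ\gamma\to\check a\circ\gamma$ strongly in $L^1(ds)$, and it is \emph{this} convergence along $\gamma$ that yields equi-integrability of $\{\widehat{g_k}\circ\gamma\}$ in $L^1(ds)$ and hence equicontinuity of $\{\widehat{u_k}\circ\gamma\}$. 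With the attribution corrected (Fuglede, not Lemma~\ref{lem:generalized equiintegrability}, supplies the along-curve equi-integrability), the argument is sound and matches the paper's.
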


\begin{proof}
We pick a sequence of open sets $F_i$ such that $F\subset F_i\subset \Omega$ and $\Vert Du\Vert^s(F_i)<1/i$, $i=1,2,\ldots$. Then, as described in Lemma \ref{lem:generalized equiintegrability}, we pick Whitney coverings $\{B_j^i\}_{j=1}^{\infty}$ of the sets $F_i$, with the constant $\tau=5\lambda$.

Furthermore, as we did in the latter part of the proof of Theorem \ref{thm:estimate from above} with the open set $G$, we define for every $i\in\N$ a discrete convolution $u_i$ of the function $u$ with respect to the Whitney covering $\{B_j^i\}_{j=1}^{\infty}$. Every $u_i$ has an upper gradient
\[
g_i=C\sum_{j=1}^{\infty}\chi_{B_j^i}\frac{\Vert Du\Vert(5\lambda B_j^i)}{\mu(B_j^i)}
\]
in $F_i$, with $C=C(c_d,c_P)$, and naturally $g_i$ is then also an upper gradient of $u_i$ in $F$. We have $u_i\to u$ in $L^1(F)$ (see e.g. the proof of \cite[Proposition 4.1]{KKST}) and, according to Lemma \ref{lem:generalized equiintegrability} and up to a subsequence, $g_i\to \check{a}$ weakly in $L^1(F)$, where $\check{a}\le Ca$ $\mu$-almost everywhere in $F$. We now know by \cite[Lemma 7.8]{Hj} that by modifying $u$ on a set of $\mu$-measure zero, if necessary, we have that $\check{a}$ is a $1$-weak upper gradient of $u$ in $F$. Thus we have the result.
\end{proof}

\begin{remark}
As in Lemma \ref{lem:generalized equiintegrability}, if $\Vert Du\Vert$ is absolutely continuous on the whole of $\Omega$, we can choose simply $F=\Omega$, and then we also have the inequality
\[
\int_{\Omega}g_u\,d\mu\le C\Vert Du\Vert(\Omega)
\]
with $C=C(c_d,c_P,\lambda)$. Note also that the proof of \cite[Lemma 7.8]{Hj}, which we used above, is also based on Mazur's lemma, so the techniques used above are very similar to those used in the proof of Theorem \ref{thm:estimate from above}.
\end{remark}

Finally we give the counterexample which shows that in general, we can have
\[
\mF^a(u,\Omega)> \int_{\Omega}f(a)\,d\mu
\quad\text{and}\quad\Vert Du\Vert(\Omega)< \int_{\Omega}g_u\,d\mu.
\]
The latter inequality answers a question raised in \cite{M} and later in \cite{AMP}.

\begin{example}
Take the space $X=[0,1]$, equipped with the Euclidean distance and a measure $\mu$, which we will next define. First we construct a fat Cantor set $A$ as follows. Take $A_0=[0,1]$, whose measure we denote $\alpha_0=\mathcal L^1(A_0)=1$, where $\mathcal L^1$ is the 1-dimensional Lebesgue measure. Then in each step $i\in\N$ we remove from $A_{i-1}$ the set $B_i$, which consists of $2^{i-1}$ open intervals of length $2^{-2i}$, centered at the middle points of the intervals that make up $A_{i-1}$. We denote $\alpha_i=\mathcal L^1(A_i)$, and define $A=\cap_{i=1}^{\infty}A_i$. Then we have
\[
\alpha=\mathcal L^1(A) =\lim_{i\to\infty}\alpha_i=1/2.
\]
Now, equip the space $X$ with the weighted Lebesgue measure $d\mu=w\,d\mathcal L^1$, where $w=2$ in $A$ and $w=1$ in $X\setminus A$. Define
\[
g=\frac{1}{\alpha}\chi_{A}=2\chi_{A} \quad\ \textrm{and}\quad\  g_i=\frac{1}{\alpha_{i-1}-\alpha_i}\chi_{B_i},\ \ i\in\N.
\]
The unweighted integral of $g$ and each $g_i$ over $X$ is $1$.
Next define the function
\[
u(x)=\int_0^x g\, d\mathcal L^1.
\]
Now $u$ is in $N^{1,1}(X)$ and even in $\Lip(X)$, since $g$ is bounded. The minimal 1-weak upper gradient of $u$ is $g$ --- this can be seen e.g. by the representation formulas for minimal upper gradients, see \cite[Theorem 2.50]{BB}. Approximate $u$ with the functions
\[
u_i(x)=\int_0^x g_i\, d\mathcal L^1,\quad i\in\N.
\]
The functions $u_i$ are Lipschitz, and they converge to $u$ in $L^1(X)$ and even uniformly. This can be seen as follows. Given $i\in\N$, the set $A_i$ consists of $2^i$ intervals of length $\alpha_i/2^i$. If $I$ is one of these intervals, we have
\[
2^{-i}=\int_Ig\,d\mathcal L^1 =\int_Ig_{i+1}\,d\mathcal L^1,
\]
and also
\[
\int_{X\setminus A_i}g\,d\mathcal L^1 =0=\int_{X\setminus A_i}g_{i+1}\,d\mathcal L^1 .
\]
Hence $u_{i+1}=u$ at the end points of the intervals that make up $A_i$, and elsewhere $|u_{i+1}-u|$ is at most $2^{-i}$.

Clearly the minimal 1-weak upper gradient of $u_i$ is $g_i$. However, we have
\[
\int_0^1 g\,d\mu = 2 > 1 = \lim_{i\to\infty} \int_0^1 g_i\,d\mu \geq \Vert Du\Vert([0,1]).
\]
Thus the total variation is strictly smaller than the integral of the minimal 1-weak upper gradient, demonstrating the necessity of the constant $C$ in Theorem \ref{thm:min upper gradient and variation}. On the other hand, any approximating sequence $u_i\to u$ in $L^1(X)$ converges, up to a subsequence, also pointwise $\mu$- and thus $\mathcal L^1$-almost everywhere, and then we necessarily have for some such sequence
\begin{equation}\label{eq:gradients of approximating sequence}
\Vert Du\Vert([0,1])=\lim_{i\to\infty}\int_0^1g_{u_i}\,d\mu\ge \limsup_{i\to\infty}\int_0^1g_{u_i}\,d\mathcal L^1\ge 1.
\end{equation}
Hence we have $\Vert Du\Vert([0,1])=1$.
Let us show that more precisely, $d\Vert Du\Vert=a\,d\mu$ with $a=\chi_{A}$. The fact that $u$ is Lipschitz implies that $\Vert Du\Vert$ is absolutely continuous with respect to $\mu$. Since $u_i$ converges to $u$ uniformly, for any interval $(d,e)$ we must have
\[
\lim_{i\to\infty} \int_{(d,e)} g_i \,d\mathcal L^1  = \int_{(d,e)} g \,d\mathcal L^1 ,
\]
and since for the weight we had $w=1$ where $g_i> 0$, and $w=2$ where $g> 0$, we now get
\[
\lim_{i\to\infty} \int_{(d,e)} g_i \,d\mu = \frac{1}{2} \int_{(d,e)} g \,d\mu.
\]
By the definition of the variation measure, we have at any point $x\in X$ for $r>0$ small enough
\[
\Vert Du\Vert((x-r,x+r)) \le \liminf_{i\to\infty}\int_{(x-r,x+r)} g_i \,d\mu = \frac{1}{2} \int_{(x-r,x+r)} g \,d\mu.
\]
Now, if $x\in A$, we can estimate the Radon-Nikodym derivative
\[
\limsup_{r\to\infty}\frac{\Vert Du\Vert(B(x,r))}{\mu(B(x,r))}\le 1,
\]
and when $x\in X\setminus A$, we clearly have that the derivative is $0$. On the other hand, if the derivative were strictly smaller than $1$ in a subset of $A$ of positive $\mu$-measure, we would get $\Vert Du\Vert(X)<1$, which is a contradiction with the fact that $\Vert Du\Vert(X)=1$. 
Thus $d\Vert Du\Vert=a\,d\mu$ with $a=\chi_{A}$.
\footnote{We can further show that $g_i\,d\mu \overset{*}{\rightharpoonup} a\,d\mu$ in $X$, but we do not have $g_i\to a$ weakly in $L^1(X)$, demonstrating the subtle difference between the two types of weak convergence.}

To show that we can have $\mathcal F^a(u,X)> \int_{X}f(a)\,d\mu$ --- note that $\mathcal F^a(u,X)=\mathcal F(u,X)$ ---  assume that $f$ is given by
\[
f(t)=\begin{cases}
t,     & t\in[0,1], \\
2t-1,  & t>1.
\end{cases}
\]
(We could equally well consider other nonlinear $f$ that satisfy the earlier assumptions.)
Since $a=\chi_{A}$, we have
\[
\int_{X}f(a)\,d\mu=\int_{X}a\,d\mu= 2\int_X\chi_A\,d\mathcal L^1=1.
\]
On the other hand, for some sequence of Lipschitz functions $v_i\to u$ in $L^1(X)$, we have
\begin{equation}\label{eq:example}
\begin{split}
\mF(u,X) &=\lim_{i\to\infty}\int_{X}f(g_{v_i})\,d\mu\\
         &=\lim_{i\to\infty} \left(2\int_Af(g_{v_i})\,d\mathcal L^1+\int_{X\setminus A}f(g_{v_i})\,d\mathcal L^1 \right).
\end{split}
\end{equation}
By considering a subsequence, if necessary, we may assume that $v_i\to u$ pointwise $\mu$- and thus $\mathcal L^1$-almost everywhere.
By Proposition \ref{prop:weak convergence}, we have for any closed set $F\subset X\setminus A$
\[
\limsup_{i\to\infty}\int_{F} f(g_{v_i})\,d\mu\le\mF(u,F)\le\mF(u,X\setminus A)\le\int_{X\setminus A}f(g_u)\,d\mu=0,
\]
which implies that
\[
\lim_{i\to\infty}\int_{F} f(g_{v_i})\,d\mathcal L^1=0=\lim_{i\to\infty}\int_{F} g_{v_i}\,d\mathcal L^1.
\]
Applying these two equalities together with the inequality $f(t)\geq 2t-1$, we obtain
\begin{align*}
\limsup_{i\to\infty} \int_{X\setminus A} f(g_{v_i}) \,d\mathcal L^1 &= \limsup_{i\to\infty} \int_{X\setminus (A\cup F)} f(g_{v_i}) \,d\mathcal L^1\\
&\ge \limsup_{i\to\infty} \int_{X\setminus (A\cup F)} (2g_{v_i}-1) \,d\mathcal L^1\\
&\ge \limsup_{i\to\infty} \int_{X\setminus (A\cup F)} 2g_{v_i} \,d\mathcal L^1 - \mathcal L^1(X\setminus (A\cup F))\\
&=\limsup_{i\to\infty} \int_{X\setminus A} 2 g_{v_i} \,d\mathcal L^1 - \mathcal L^1(X\setminus (A\cup F)).
\end{align*}
The last term on the last line can be made arbitrarily small.
Inserting this into \eqref{eq:example}, we get
\begin{align*}
\mF(u,X)&=\limsup_{i\to\infty} \left(2\int_A f(g_{v_i})\,d\mathcal L^1+\int_{X\setminus A}f(g_{v_i})\,d\mathcal L^1 \right)\\
&\ge 2\liminf_{i\to\infty}\int_Af(g_{v_i})\,d\mathcal L^1+ 2\limsup_{i\to\infty}\int_{X\setminus A}g_{v_i}\,d\mathcal L^1 \\
&\ge 2\liminf_{i\to \infty}\int_0^1g_{v_i}\,d\mathcal L^1\ge 2.
\end{align*}
The last inequality follows from the pointwise convergence of $v_i$ to $u$ $\mathcal L^1$-almost everywhere.

Roughly speaking, we note that the total variation $\Vert Du\Vert(X)$ is found to be unexpectedly small because the growth of the approximating functions $u_i$ is concentrated outside the Cantor set $A$, where it is ``cheaper'' due to the smaller value of the weight function. However, when we calculate $\mF(u,X)$, the same does not work, because now the nonlinear function $f$ places ``extra weight'' on upper gradients that take values larger than $1$.
\end{example}

\section{Minimization problem}

Let us consider a minimization problem related to the functional of linear growth. First we specify what we mean by boundary values of $\BV$ functions. 

\begin{definition}
Let $\Omega$ and $\Omega^*$ be bounded open subsets of $X$ such that $\Omega\Subset\Omega^*$, and assume that $h\in\BV(\Omega^*)$. 
We define $\BV_{h}(\Omega)$ as the
space of functions $u\in\BV(\Omega^*)$ such that $u=h$ $\mu$-almost everywhere in $\Omega^*\setminus\Omega$.
\end{definition}

Now we give the definition of our minimization problem.
\begin{definition}\label{def:minimization problem}
A function $u\in \BV_{h}(\Omega)$  is a minimizer of the functional of linear growth
with the boundary values $h\in\BV(\Omega^*)$, if
\[
\mathcal F(u,\Omega^*)= \inf\mathcal F(v,\Omega^*),
\]
where the infimum is taken over all $v\in\BV_h(\Omega)$.
\end{definition}

Note that if $u\in L^1_{\loc}(\Omega^*)$ and $u=h$ in $\Omega^*\setminus \Omega$, then $u\in L^1(\Omega^*)$. Furthermore, if $\mF(u,\Omega^*)<\infty$, then $\Vert Du\Vert(\Omega^*)<\infty$ by \eqref{eq:basic estimate for functional}. Thus it makes sense to restrict $u$ to the class $\BV(\Omega^*)$ in the above definition.
Observe that the minimizers do not depend on $\Omega^*$, but the value of the functional does. 
Note also that the minimization problem always has a solution and that the solution is not necessarily continuous, see \cite{HKL}.

\begin{remark}
We point out that any minimizer is also a local minimizer in the following sense. A minimizer $u\in \BV_{h}(\Omega)$ of $\mathcal F(\cdot,\Omega^*)$ with the boundary values $h\in\BV(\Omega^*)$ is
a minimizer of $\mathcal F(\cdot,\Omega'')$ with the  boundary values $u\in\BV_{u}(\Omega')$ for every $\Omega'\Subset\Omega''\subset\Omega^*$, with $\Omega'\subset\Omega$. This can be seen as follows. Every
$v\in\BV_{u}(\Omega')$ can be extended to $\Omega^*$ by defining $v=u$ in $\Omega^*\setminus\Omega''$. The minimality of $u$ and the measure property of the functional (Theorem \ref{thm:measure_prop}) then imply that
\[
\mF(u,\Omega^*\setminus\Omega'') + \mF(u,\Omega'')\leq\mF(v,\Omega^*\setminus\Omega'') + \mF(v,\Omega''). 
\]
Since $u=v$ $\mu$-almost everywhere in $\Omega^*\setminus\Omega'$, the first terms on both sides of the inequality cancel out, and we have
\[
\mF(u,\Omega'')\leq \mF(v,\Omega''). 
\]
\end{remark}

Now we wish to express the boundary values of the minimization problem as a penalty term involving an integral over the boundary. 
To this end, we need to discuss boundary traces and extensions of $\BV$ functions.

\begin{definition}
An open set $\Omega$ is a strong $\BV$ extension domain, if for every $u\in \BV(\Omega)$ there is an extension $Eu\in \BV(X)$ such that $Eu|_{\Omega}=u$, there is a constant $1\le c_{\Omega}<\infty$ such that $\Vert Eu\Vert_{\BV(X)}\le c_{\Omega}\Vert u\Vert_{\BV(\Omega)}$ and $\Vert D(Eu)\Vert(\partial\Omega)=0$.
\end{definition}
Note that our definition differs from the conventional definition of a $\BV$ extension domain, since we also require that $\Vert D(Eu)\Vert(\partial\Omega)=0$. This can be understood as an additional regularity condition for the domain. 

\begin{definition}
We say that a $\mu$-measurable set $\Omega$ satisfies the \emph{weak measure density condition} if for $\mathcal H$-almost every $x\in\partial \Omega$, we have
\[
\liminf_{r\to 0}\frac{\mu(B(x,r)\cap\Omega)}{\mu(B(x,r))}>0.
\]
\end{definition}

These are the two conditions we will impose in order to have satisfactory results on the boundary traces of $\BV$ functions.
Based on results found in \cite{BS}, we prove in the upcoming note \cite{L} that every bounded \emph{uniform} domain is a strong $\BV$ extension domain and satisfies the weak measure density condition. An open set $\Omega$ is $A$-uniform, with constant $A\ge 1$, if for every $x,y\in\Omega$ there is a curve $\gamma$ in $\Omega$ connecting $x$ and $y$ such that $\ell_{\gamma}\le Ad(x,y)$, and for all $t\in [0,\ell_{\gamma}]$, we have
\[
\dist(\gamma(t),X\setminus\Omega)\ge A^{-1}\min\{t,\ell_{\gamma}-t\}.
\]


Now we give the definition of boundary traces.
\begin{definition}
For a $\mu$-measurable set $\Omega$ and a $\mu$-measurable function $u$ on $\Omega$, a real-valued function $T_{\Omega}u$ defined on $\partial\Omega$ is a boundary trace of $u$ if for $\mathcal H$-almost every $x\in\partial\Omega$, we have
\[
\lim_{r\to 0}\,\vint{\Omega\cap B(x,r)}|u-T_{\Omega}u(x)|\,d\mu=0.
\]
\end{definition}

Often we will also call $T_{\Omega}u(x)$ a boundary trace if the above condition is satisfied at the point $x$.
If the trace exists at a point $x\in\partial\Omega$, we clearly have
\[
T_{\Omega}u(x)=\lim_{r\to 0}\,\vint{B(x,r)\cap\Omega}u\,d\mu=\aplim\limits_{y\in\Omega,\, y\to x}u(y),
\]
where $\aplim$ denotes the approximate limit.
Furthermore, we can show that the trace is always a Borel function.

Let us recall the following decomposition result for the variation measure of a $\BV$ function from \cite[Theorem 5.3]{AMP}. For any open set $\Omega\subset X$, any $u\in\BV(\Omega)$, and any Borel set $A\subset \Omega$ that is $\sigma$-finite with respect to $\mathcal H$, we have
\begin{equation}\label{eq:decomposition}
\Vert Du\Vert(\Omega)=\Vert Du\Vert(\Omega\setminus A)+\int_A\int_{u^{\wedge}(x)}^{u^{\vee}(x)}\theta_{\{u>t\}}(x)\,dt\,d\mathcal H(x).
\end{equation}
The function $\theta$ and the lower and upper approximate limits $u^{\wedge}$ and $u^{\vee}$ were defined in Section \ref{sec:prelis}. In particular, by \cite[Theorem 5.3]{AMP} the jump set $S_u$ is known to be $\sigma$-finite with respect to $\mathcal H$.

The following is our main result on boundary traces.

\begin{theorem}\label{thm:boundary traces}
Assume that $\Omega$ is a strong $\BV$ extension domain that satisfies the weak measure density condition, and let $u\in\BV(\Omega)$. Then the boundary trace $T_{\Omega}u$ exists, that is, $T_{\Omega}u(x)$ is defined for $\mathcal H$-almost every $x\in\partial\Omega$.
\end{theorem}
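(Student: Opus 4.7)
The plan is to reduce the trace problem to a Lebesgue point statement for an extension of $u$ to all of $X$. First I would use the strong $\BV$ extension property to pick $Eu\in\BV(X)$ with $Eu|_{\Omega}=u$ and $\|D(Eu)\|(\partial\Omega)=0$. The earlier-cited result \cite[Theorem 3.5]{KKST} tells us that $\mathcal H$-almost every point of $X\setminus S_{Eu}$ is a Lebesgue point of $Eu$. The task then splits into two pieces: showing that the jump set $S_{Eu}$ intersects $\partial\Omega$ in an $\mathcal H$-null set, and then converting Lebesgue limits along balls in $X$ into averages over $B(x,r)\cap\Omega$ using the weak measure density condition.

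For the first piece, I would apply the decomposition formula \eqref{eq:decomposition} to $Eu$ on the open set $X$ with the Borel set $A=S_{Eu}\cap\partial\Omega$, which is $\sigma$-finite with respect to $\mathcal H$ since $S_{Eu}$ is. Subtracting the part on $X\setminus A$ from $\|D(Eu)\|(X)$ yields
\[
\|D(Eu)\|(A)=\int_{A}\int_{(Eu)^{\wedge}(x)}^{(Eu)^{\vee}(x)}\theta_{\{Eu>t\}}(x)\,dt\,d\mathcal H(x).
\]
Since $A\subset\partial\Omega$ and $\|D(Eu)\|(\partial\Omega)=0$, the left-hand side is $0$. On $S_{Eu}$ we have $(Eu)^{\wedge}<(Eu)^{\vee}$, and by \eqref{eq:def of theta} the density $\theta_{\{Eu>t\}}$ is bounded below by $\alpha>0$ wherever it is defined, which happens for $\mathcal L^1$-a.e.\ $t$ between the approximate limits. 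Hence the integrand is strictly positive on $A$, forcing $\mathcal H(S_{Eu}\cap\partial\Omega)=0$.

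For the second piece, fix a point $x\in\partial\Omega\setminus S_{Eu}$ that is simultaneously a Lebesgue point of $Eu$ and satisfies the weak measure density condition; by the previous step and the density hypothesis, $\mathcal H$-almost every $x\in\partial\Omega$ qualifies. Set $\widetilde{Eu}(x)$ to be the Lebesgue value. Using $u=Eu$ on $\Omega$ and writing $\kappa(x,r)=\mu(B(x,r))/\mu(B(x,r)\cap\Omega)$, we estimate
\[
\vint{B(x,r)\cap\Omega}|u-\widetilde{Eu}(x)|\,d\mu\le \kappa(x,r)\,\vint{B(x,r)}|Eu-\widetilde{Eu}(x)|\,d\mu.
\]
The weak measure density condition makes $\limsup_{r\to 0}\kappa(x,r)<\infty$, while the Lebesgue point property drives the second factor to $0$. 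This gives $T_\Omega u(x)=\widetilde{Eu}(x)$ at every such $x$, proving the theorem.

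The main obstacle is the jump-set step: one must squeeze the information $\|D(Eu)\|(\partial\Omega)=0$ — which is a property of the full variation measure — into a statement purely about the $\mathcal H$-size of $S_{Eu}\cap\partial\Omega$. The decomposition \eqref{eq:decomposition} combined with the strict positivity of $\theta$ is precisely the mechanism that makes this passage work; without the uniform lower bound $\theta\ge\alpha$, the jump part of the variation could a priori vanish on a set of positive $\mathcal H$-measure, and our argument would collapse. The remaining measure-density argument is then essentially routine.
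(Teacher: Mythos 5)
Your proof is correct and follows essentially the same strategy as the paper's: extend $u$ to $Eu\in\BV(X)$, use $\Vert D(Eu)\Vert(\partial\Omega)=0$ together with the decomposition \eqref{eq:decomposition} and the lower bound $\theta\ge\alpha$ to kill $\mathcal H(S_{Eu}\cap\partial\Omega)$, invoke the Lebesgue-point theorem for $\BV$ functions off the jump set, and finally convert full-ball averages into $B(x,r)\cap\Omega$ averages via the weak measure density condition. Your write-up merely makes explicit the subtraction step and the role of $\sigma$-finiteness of $S_{Eu}$, which the paper leaves implicit.
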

\begin{proof}
Extend $u$ to a function $Eu\in\BV(X)$. By the fact that 
\[
\Vert D(Eu)\Vert(\partial \Omega)=0
\] 
and the decomposition \eqref{eq:decomposition}, we have $\mathcal H(S_{Eu}\cap \partial\Omega)=0$ --- recall that the function $\theta$ is bounded away from zero. Here 
\[
S_{Eu}=\{x\in X:\,(Eu)^{\wedge}(x)<(Eu)^{\vee}(x)\},
\] 
as usual. On the other hand, by \cite[Theorem 3.5]{KKST} we know that $\mathcal H$-almost every point $x\in\partial^*\Omega\setminus S_{Eu}$ is a Lebesgue point of $Eu$. In these points we define $T_{\Omega}u(x)$ simply as the Lebesgue limit $\widetilde{Eu}(x)$. For $\mathcal H$-almost every $x\in\partial\Omega$ the weak measure density condition is also satisfied, so that
\[
\liminf_{r\to 0}\frac{\mu(B(x,r)\cap\Omega)}{\mu(B(x,r))}=c>0.
\]
Thus for $\mathcal H$-almost every $x\in\partial\Omega$ we can estimate
\[
\begin{split}
\limsup_{r\to 0}\,&\vint{B(x,r)\cap\Omega}|u-T_{\Omega}u(x)|\,d\mu\\
&\le \limsup_{r\to 0}\frac{1}{c\mu(B(x,r))}\int_{B(x,r)}|Eu-\widetilde{Eu}(x)|\,d\mu=0.
\end{split}
\]
\end{proof}

Due to the Lebesgue point theorem \cite[Theorem 3.5]{KKST}, we have in fact
\[
\limsup_{r\to 0}\,\vint{B(x,r)\cap\Omega}|u-T_{\Omega}u(x)|^{Q/(Q-1)}\,d\mu=0
\]
for $\mathcal H$-almost every $x\in\partial\Omega$, where $Q>1$ was given in \eqref{eq:doubling dimension}. However, we will not need this stronger result.

Let us list some general properties of boundary traces.

\begin{proposition}\label{prop:prop of trace}
Assume that $\Omega$ is a $\mu$-measurable set and that $u$ and $v$ are $\mu$-measurable functions on $\Omega$. The boundary trace operator enjoys the following properties for any $x\in\partial\Omega$ for which both $T_{\Omega}u(x)$ and $T_{\Omega}v(x)$ exist: 
\begin{enumerate}[(i)]
\item $T_{\Omega}(\alpha u + \beta v)(x)= \alpha\, T_{\Omega}u(x) +\beta\, T_{\Omega}v(x)$ for any $\alpha,\beta\in\R$.\\
\item If $u\ge v$ $\mu$-almost everywhere~in $\Omega$, then $T_{\Omega} u(x)\ge T_{\Omega} v(x)$. In particular, if
$u=v$ $\mu$-almost everywhere~in $\Omega$, then  $T_{\Omega} u(x)= T_{\Omega} v(x)$.\\
\item $T_{\Omega}(\max\{u,v\})(x)=\max\{T_{\Omega}u(x),T_{\Omega}v(x)\}$ and
$T_{\Omega}(\min\{u,v\})(x)=\min\{T_{\Omega}u(x),T_{\Omega}v(x)\}$.\\
\item Let $h>0$ and define the truncation $u_h=\min\{h,\max\{u,-h\}\}$. Then $T_{\Omega} u_h (x)=(T_{\Omega}u(x))_h$.\\
\item If $\Omega$ is a $\mu$-measurable set such that both $\Omega$ and its complement satisfy the weak measure density condition, and $w$ is a $\mu$-measurable function on $X$, then for $\mathcal H$-almost everywhere $x\in\partial\Omega$ for which both traces $T_{\Omega}w(x)$ and $T_{X\setminus \Omega}w(x)$ exist, we have
\[
\{T_{\Omega} w(x), T_{X\setminus\Omega}w(x)\}=\{w^{\wedge}(x), w^{\vee}(x)\}.
\]
\end{enumerate}
\end{proposition}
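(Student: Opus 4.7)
The proof of (i)--(iv) is essentially routine, following from the defining $L^1$-averaging condition. The core technical step is property (v), which requires converting $L^1$-trace information into statements about densities of sublevel sets.

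For (i), linearity of the integral and the triangle inequality give
\[
\vint{B(x,r) \cap \Omega} |(\alpha u + \beta v) - (\alpha T_\Omega u(x) + \beta T_\Omega v(x))|\, d\mu \to 0,
\]
so $\alpha T_\Omega u(x) + \beta T_\Omega v(x)$ satisfies the defining property of $T_\Omega(\alpha u + \beta v)(x)$. For (ii), as noted after the definition, $T_\Omega u(x) = \lim_{r\to 0} \vint_{B(x,r)\cap\Omega} u\,d\mu$, and monotonicity of averages passes to the limit; the equality case follows by applying the inequality in both directions. For (iii), combine the definition with the pointwise Lipschitz bounds
\[
|\max\{a,b\} - \max\{c,d\}| \le |a-c| + |b-d|, \qquad |\min\{a,b\} - \min\{c,d\}| \le |a-c| + |b-d|,
\]
which show the $L^1$-average for $\max\{u,v\}$ (resp.~$\min\{u,v\}$) about $\max\{T_\Omega u(x), T_\Omega v(x)\}$ is dominated by the sum of the corresponding averages for $u$ and $v$. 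For (iv), since constants have trivial traces, $T_\Omega(\max\{u,-h\})(x) = \max\{T_\Omega u(x),-h\}$ by (iii), and a second application of (iii) with the constant $h$ yields the claim.

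The heart of the proposition is (v). Set $a = T_\Omega w(x)$ and $b = T_{X \setminus \Omega} w(x)$, and without loss of generality assume $a \le b$. The weak measure density condition applied to both $\Omega$ and $X \setminus \Omega$ ensures that, for $\mathcal H$-almost every boundary point, there exist $c_1, c_2 > 0$ with $\mu(B(x,r) \cap \Omega) \ge c_1 \mu(B(x,r))$ and $\mu(B(x,r) \setminus \Omega) \ge c_2 \mu(B(x,r))$ for small $r$. For $t > a$, Chebyshev's inequality yields
\[
(t-a)\,\mu(\{w > t\} \cap B(x,r) \cap \Omega) \le \int_{B(x,r) \cap \Omega} |w - a|\, d\mu = o(\mu(B(x,r))),
\]
with the analogous bound on $B(x,r) \setminus \Omega$ for $t > b$. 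Summing, for $t > b$ we obtain $\mu(\{w > t\} \cap B(x,r))/\mu(B(x,r)) \to 0$, so $w^\vee(x) \le b$. Conversely, for $t < b$ the same Chebyshev estimate shows that the set $\{w > t\} \cap B(x,r) \setminus \Omega$ exhausts $B(x,r) \setminus \Omega$ up to a relatively negligible portion; the lower density bound then gives
\[
\liminf_{r\to 0}\frac{\mu(\{w > t\} \cap B(x,r))}{\mu(B(x,r))} \ge c_2 > 0,
\]
forcing $w^\vee(x) \ge t$, and letting $t \nearrow b$ yields $w^\vee(x) = b$. The symmetric argument with $\{w < t\}$ replacing $\{w > t\}$ gives $w^\wedge(x) = a$. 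The case $a > b$ is handled identically after exchanging the roles of $\Omega$ and $X \setminus \Omega$.

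The main obstacle is (v): the hypotheses only control $L^1$-averages of $w$ on each side of $\partial\Omega$, and one must combine the Chebyshev-type conversion to densities of superlevel and sublevel sets with the two-sided positive density lower bound to pin down both $w^\wedge(x)$ and $w^\vee(x)$ simultaneously.
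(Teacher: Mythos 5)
Your proof is correct and follows essentially the same route as the paper: parts (i)--(iv) by elementary manipulations of the defining $L^1$-averaging condition (you use a direct Lipschitz bound for $\max/\min$ where the paper writes them via absolute values and applies $T_\Omega|u|=|T_\Omega u|$, but this is the same underlying estimate), and part (v) by converting each trace condition into a Chebyshev-type density estimate for super-/sublevel sets on $\Omega$ and $X\setminus\Omega$ separately, then combining with the two-sided weak measure density condition to pin down $w^\wedge(x)$ and $w^\vee(x)$. This matches the paper's case analysis exactly, only organized as two one-sided bounds rather than a three-case display.
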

\begin{proof}
Assertions $(i)$ and $(ii)$ are clear. Since minimum and maximum can be written as sums by using absolute values, property $(iii)$ follows from $(i)$ and the easily verified fact that $T_{\Omega}|u|(x)=|T_{\Omega}u(x)|$. Assertion $(iv)$ follows from $(iii)$. In proving assertion $(v)$, due to the symmetry of the situation we can assume that $T_{\Omega}w(x)\ge T_{X\setminus\Omega}w(x)$. By using the definition of traces and Chebyshev's inequality, we deduce that for every $\eps>0$,
\[
\lim_{r\to 0}\frac{\mu(\{|w-T_{\Omega}w(x)|>\eps\}\cap B(x,r)\cap\Omega)}{\mu(B(x,r)\cap\Omega)}=0
\]
and
\[
\lim_{r\to 0}\frac{\mu(\{|w-T_{X\setminus\Omega}w(x)|>\eps\}\cap B(x,r)\setminus\Omega)}{\mu(B(x,r)\setminus\Omega)}=0.
\]
To determine the lower and upper approximate limits, we use these results to compute
\begin{align*}
&\limsup_{r\to 0}\frac{\mu(\{w>t\}\cap B(x,r))}{\mu(B(x,r))}\\
& =\limsup_{r\to 0}\left[\frac{\mu(\{w>t\}\cap B(x,r)\cap\Omega)}{\mu(B(x,r))}+\frac{\mu(\{w>t\}\cap B(x,r)\setminus\Omega)}{\mu(B(x,r))} \right]\\
&\begin{cases}
=0+0,                                                &\textrm{if }t>T_{\Omega}w(x), \\
=\limsup_{r\to 0}\frac{\mu(B(x,r)\cap\Omega)}{\mu(B(x,r))}+0,        &\textrm{if }T_{X\setminus\Omega}w(x)<t<T_{\Omega}w(x), \\
= \limsup_{r\to 0}\left[\frac{\mu(B(x,r)\cap\Omega)}{\mu(B(x,r))}+\frac{\mu(B(x,r)\setminus\Omega)}{\mu(B(x,r))}\right],  &\textrm{if }t<T_{X\setminus\Omega}w(x),
\end{cases}\\
&\begin{cases}
=0,                                                &\textrm{if }t>T_{\Omega}w(x), \\
\in (0,1),        &\textrm{if }T_{X\setminus\Omega}w(x)<t<T_{\Omega}w(x), \\
 =1,  &\textrm{if }t<T_{X\setminus\Omega}w(x).
\end{cases}
\end{align*}
To obtain the result ``$\in (0,1)$'' above, we used the weak measure density conditions.
We conclude that $w^{\vee}(x)=T_{\Omega}w(x)$, and since ``$\limsup$'' can be replaced by ``$\liminf$'' in the above calculation, we also get $w^{\wedge}(x)=T_{X\setminus\Omega}w(x)$.
\end{proof}

A minor point to be noted is that any function that is in the class $\BV(X)$, such as an extension $Eu$ for $u\in \BV(\Omega)$, is also in the class $\BV(\Omega)$, and thus $T_{\Omega}Eu=T_{\Omega}u$.

Eventually we will also need to make an additional assumption on the space, as described in the following definition which is from \cite[Definition 6.1]{AMP}. The function $\theta_E$ was introduced earlier in (\ref{eq:def of theta}).

\begin{definition}
We say that $X$ is a \emph{local space} if, given any two sets of locally finite perimeter $E_1\subset E_2\subset X$, we have $\theta_{E_1}(x)=\theta_{E_2}(x)$ for $\mathcal H$-almost every $x\in \partial^*E_1\cap\partial^*E_2$.
\end{definition}
For some examples of local spaces, see \cite{AMP} and the upcoming note \cite{L}.
The assumption $E_1\subset E_2$ can, in fact, be removed as follows. Note that for a set of locally finite perimeter $E$, we have $\Vert D\chi_E\Vert=\Vert D\chi_{X\setminus E}\Vert$, i.e. the two measures are equal \cite[Proposition 4.7]{M}. From this it follows that $\theta_E(x)=\theta_{X\setminus E}(x)$ for $\mathcal H$-almost every $x\in\partial^*E$. Now, if $E_1$ and $E_2$ are arbitrary sets of locally finite perimeter, we know that $E_1\cap E_2$ and $E_1\setminus E_2$ are also sets of locally finite perimeter \cite[Proposition 4.7]{M}. For every $x\in\partial^*E_1\cap\partial^*E_2$ we have either $x\in\partial^*(E_1\cap E_2)$ or $x\in\partial^*(E_1\setminus E_2)$. Thus by the locality condition, we have for $\mathcal H$-almost every $x\in\partial^*E_1\cap\partial^*E_2$ either
\[
\theta_{E_1}(x)=\theta_{E_1\cap E_2}(x)=\theta_{E_2}(x)
\]
or
\[
\theta_{E_1}(x)=\theta_{E_1\setminus E_2}(x)=\theta_{X\setminus E_2}(x)=\theta_{E_2}(x).
\]
Thus we have $\theta_{E_1}=\theta_{E_2}$ for $\mathcal H$-almost every $x\in\partial^*E_1\cap\partial^*E_2$.

In a local space the decomposition \eqref{eq:decomposition} takes a simpler form, as proved in the following lemma.

\begin{lemma}\label{lem:consequence of locality}
If $X$ is a local space, $\Omega$ is a set of locally finite perimeter, $u\in\BV(X)$, and $A\subset \partial^*\Omega$ is a Borel set, then we have
\[
\int_A\int_{u^{\wedge}(x)}^{u^{\vee}(x)}\theta_{\{u>t\}}(x)\,dt\,d\mathcal H(x)=\int_A(u^{\vee}(x)-u^{\wedge}(x))\theta_{\Omega}\,d\mathcal H(x).
\]
\end{lemma}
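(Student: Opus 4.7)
The plan is to combine two ingredients: (i) a pointwise set-theoretic fact that places $x$ in the reduced boundary of the superlevel set $\{u>t\}$ whenever $t$ lies strictly between the approximate limits, and (ii) the locality assumption, invoked via Fubini so that the $t$-dependent exceptional null sets become harmless.

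First I would prove the pointwise inclusion: for any $x\in\partial^*\Omega$ and any $t\in(u^{\wedge}(x),u^{\vee}(x))$, one has $x\in\partial^*\{u>t\}$. This is immediate from the definitions of $u^{\wedge}$ and $u^{\vee}$, since $t<u^{\vee}(x)$ forces $\limsup_{r\to 0}\mu(\{u>t\}\cap B(x,r))/\mu(B(x,r))>0$ and $t>u^{\wedge}(x)$ forces $\limsup_{r\to 0}\mu(\{u<t\}\cap B(x,r))/\mu(B(x,r))>0$. Since $\{u<t\}\subset X\setminus\{u>t\}$, both $\{u>t\}$ and its complement have positive upper density at $x$, which is exactly the definition of $\partial^*\{u>t\}$.

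Next I would set up Fubini. Because $\Omega$ has locally finite perimeter and $\theta_{\Omega}\ge\alpha>0$, the set $\partial^*\Omega\supset A$ is $\sigma$-finite with respect to $\mathcal H$ by the representation \eqref{eq:def of theta}. The integrand $(x,t)\mapsto \theta_{\{u>t\}}(x)\chi_{(u^{\wedge}(x),u^{\vee}(x))}(t)$ is nonnegative and jointly Borel, so Tonelli's theorem yields
\[
\int_A\int_{u^{\wedge}(x)}^{u^{\vee}(x)}\theta_{\{u>t\}}(x)\,dt\,d\mathcal H(x)=\int_{\R}\int_{A_t}\theta_{\{u>t\}}(x)\,d\mathcal H(x)\,dt,
\]
where $A_t=\{x\in A:u^{\wedge}(x)<t<u^{\vee}(x)\}$. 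The coarea formula \eqref{eq:coarea} together with $u\in\BV(X)$ ensures $P(\{u>t\},X)<\infty$ for $\mathcal L^1$-a.e.\ $t\in\R$, and for each such $t$ the locality condition (applied to the pair $\{u>t\}$ and $\Omega$) gives $\theta_{\{u>t\}}=\theta_{\Omega}$ $\mathcal H$-a.e.\ on $\partial^*\{u>t\}\cap\partial^*\Omega$, a set that by Step~1 contains $A_t$. Substituting this equality into the inner integral and then running Tonelli in reverse gives
\[
\int_A\int_{u^{\wedge}(x)}^{u^{\vee}(x)}\theta_{\{u>t\}}(x)\,dt\,d\mathcal H(x)=\int_A\theta_{\Omega}(x)\int_{u^{\wedge}(x)}^{u^{\vee}(x)}dt\,d\mathcal H(x),
\]
which is the asserted identity.

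The main conceptual obstacle is precisely that the locality assumption furnishes equality of the densities $\theta_{\{u>t\}}$ and $\theta_{\Omega}$ only off an $\mathcal H$-null set that a priori depends on the parameter $t$; the Fubini step is exactly what converts this family of $t$-dependent exceptional null sets into an $\mathcal L^1$-null set of bad parameters, which does not affect the outer integral. The only other technical points are the joint Borel measurability of $(x,t)\mapsto\theta_{\{u>t\}}(x)$ and the $\sigma$-finiteness of $A$ against $\mathcal H$, both of which are straightforward to record.
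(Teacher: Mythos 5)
Your proof is correct and follows essentially the same route as the paper's: rewrite the inner integral with an indicator of $(u^{\wedge}(x),u^{\vee}(x))$, apply Fubini/Tonelli using the $\sigma$-finiteness of $A$ with respect to $\mathcal H$, note that $u^{\wedge}(x)<t<u^{\vee}(x)$ forces $x\in\partial^*\{u>t\}$, invoke locality for each fixed $t$ on $\partial^*\{u>t\}\cap\partial^*\Omega$, and switch the order of integration back. You are slightly more explicit than the paper in two spots that it leaves implicit: you invoke the coarea formula to guarantee that $\{u>t\}$ has finite perimeter for a.e.\ $t$ (a hypothesis of the locality condition), and you flag the joint Borel measurability of $(x,t)\mapsto\theta_{\{u>t\}}(x)$ as a point to record. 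These are reasonable additions, not a different argument.
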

Note that since $\Omega$ is a set of locally finite perimeter, $A\subset\partial^*\Omega$ is $\sigma$-finite with respect to $\mathcal H$.
\begin{proof}
We have
\begin{align*}
 &\int_A\int_{u^{\wedge}(x)}^{u^{\vee}(x)}\theta_{\{u>t\}}(x)\,dt\,d\mathcal H(x)\\
 &=\int_A\int_{-\infty}^{\infty}\chi_{\{(u^{\wedge}(x),u^{\vee}(x))\}}(t)\theta_{\{u>t\}}(x)\,dt\,d\mathcal H(x)\\
 & =\int_{-\infty}^{\infty}\int_A\chi_{\{(-\infty,t)\}}(u^{\wedge}(x))\chi_{\{(t,\infty)\}}(u^{\vee}(x))\theta_{\{u>t\}}(x)\,d\mathcal H(x)\,dt\\
 & =\int_{-\infty}^{\infty}\int_{A\cap\partial^*\{u>t\}}\chi_{\{(-\infty,t)\}}(u^{\wedge}(x))\chi_{\{(t,\infty)\}}(u^{\vee}(x))\theta_{\{u>t\}}(x)\,d\mathcal H(x)\,dt.
 \end{align*}
On the third line we used Fubini's theorem. On the fourth line we used the fact that if $u^{\wedge}(x)<t<u^{\vee}(x)$, then $x\in\partial^*\{u>t\}$. 
This follows from the definitions of the lower and upper approximate limits. 
By the locality condition we see that the right-hand side above equals to
 \begin{align*}
 &\int_{-\infty}^{\infty}\int_{A\cap\partial^*\{u>t\}}\chi_{\{(-\infty,t)\}}(u^{\wedge}(x))\chi_{\{(t,\infty)\}}(u^{\vee}(x))\theta_{\Omega}(x)\,d\mathcal H(x)\,dt\\
 & = \int_{-\infty}^{\infty}\int_{A}\chi_{\{(-\infty,t)\}}(u^{\wedge}(x))\chi_{\{(t,\infty)\}}(u^{\vee}(x))\theta_{\Omega}(x)\,d\mathcal H(x)\,dt\\
 &=\int_A\int_{-\infty}^{\infty}\chi_{\{(u^{\wedge}(x),u^{\vee}(x))\}}(t)\,dt\,\theta_{\Omega}(x)\,d\mathcal H(x)\\
 &=\int_A(u^{\vee}(x)-u^{\wedge}(x))\theta_{\Omega}(x)\,d\mathcal H(x).
\end{align*}
\end{proof}

Now we prove two propositions concerning boundary traces that are based on \cite[Theorem 3.84]{AmbFP00} and \cite[Theorem 3.86]{AmbFP00}.
\begin{proposition}\label{prop:gluing}
Let $\Omega$ and $\Omega^*$ be open sets such that $\Omega$ and $\Omega^*\setminus \Omega$ satisfy the weak measure density condition, $\overline{\Omega}\subset \Omega^*$, and $\Omega$ is of finite perimeter.
Let $u,v\in \BV(\Omega^*)$, and let $w=u\chi_{\Omega}+v\chi_{\Omega^*\setminus \Omega}$. Then $w\in \BV(\Omega^*)$ if and only if
\begin{equation}\label{eq:trace integrability}
\int_{\partial \Omega}|T_{\Omega}u-T_{\Omega^*\setminus \overline{\Omega}}\,v|\,d\mathcal H<\infty.
\end{equation}
In the above characterization, we implicitly assume that the integral is well-defined --- in particular, this is the case if $\Omega$ and $\Omega^*\setminus\overline{\Omega}$ are also strong $\BV$ extension domains, due to Theorem \ref{thm:boundary traces}.
Furthermore, if $X$ is a local space, we then have
\[
\Vert Dw\Vert(\Omega^*)= \Vert Du\Vert(\Omega)+\Vert Dv\Vert(\Omega^*\setminus\overline{\Omega})+\int_{\partial \Omega}|T_{\Omega}u-T_{\Omega^*\setminus \overline{\Omega}}\,v|\theta_{\Omega}\,d\mathcal H.
\]
\end{proposition}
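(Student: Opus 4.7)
The plan is to reduce everything to the coarea formula \eqref{eq:coarea} applied to $w$, and then to analyze the perimeter of each superlevel set $\{w>t\}$ separately by splitting the variation measure across $\partial\Omega$.

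First I would observe that since $\Omega$ and $\Omega^*\setminus\overline{\Omega}$ are open and $w$ agrees $\mu$-a.e.\ with $u$ on $\Omega$ and with $v$ on $\Omega^*\setminus\overline{\Omega}$, for every $t\in\R$
\[
\lVert D\chi_{\{w>t\}}\rVert(\Omega)=P(\{u>t\},\Omega),\qquad \lVert D\chi_{\{w>t\}}\rVert(\Omega^*\setminus\overline{\Omega})=P(\{v>t\},\Omega^*\setminus\overline{\Omega}).
\]
Then I would apply the decomposition \eqref{eq:decomposition} to $\chi_{\{w>t\}}$ with the Borel set $A=\partial\Omega$ (which is $\sigma$-finite with respect to $\mathcal H$ because $\Omega$ is of finite perimeter and $\theta_\Omega\ge\alpha>0$), obtaining
\[
P(\{w>t\},\Omega^*)=P(\{u>t\},\Omega)+P(\{v>t\},\Omega^*\setminus\overline{\Omega})+J(t),
\]
where the jump contribution is
\[
J(t)=\int_{\partial\Omega}\int_{\chi_{\{w>t\}}^\wedge(x)}^{\chi_{\{w>t\}}^\vee(x)}\theta_{\{w>t\}}(x)\,ds\,d\mathcal H(x).
\]

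The main technical step is to identify $\chi_{\{w>t\}}^\wedge$ and $\chi_{\{w>t\}}^\vee$ on $\partial\Omega$ in terms of the traces of $u$ and $v$. For $\mathcal H$-a.e.\ $x\in\partial\Omega$ the weak measure density condition holds on both sides and (since $B(x,r)\subset\Omega^*$ for small $r$) $T_{X\setminus\Omega}w(x)=T_{\Omega^*\setminus\overline{\Omega}}v(x)$ and $T_\Omega w(x)=T_\Omega u(x)$. By Proposition \ref{prop:prop of trace}(v) applied to $w$ I get
\[
\{w^\wedge(x),w^\vee(x)\}=\{T_\Omega u(x),T_{\Omega^*\setminus\overline{\Omega}}v(x)\}
\]
for $\mathcal H$-a.e.\ $x\in\partial\Omega$. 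Consequently $\chi_{\{w>t\}}^\wedge(x)=0$ and $\chi_{\{w>t\}}^\vee(x)=1$ precisely when $t$ lies strictly between $T_\Omega u(x)$ and $T_{\Omega^*\setminus\overline{\Omega}}v(x)$, while at all other $t$ the integrand vanishes. Thus
\[
J(t)=\int_{\partial\Omega}\chi_{I_t}(x)\,\theta_{\{w>t\}}(x)\,d\mathcal H(x),
\]
where $I_t=\{x\in\partial\Omega:\min\{T_\Omega u(x),T_{\Omega^*\setminus\overline{\Omega}}v(x)\}<t<\max\{T_\Omega u(x),T_{\Omega^*\setminus\overline{\Omega}}v(x)\}\}$.

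Integrating in $t$ via Fubini and the coarea formula gives
\[
\lVert Dw\rVert(\Omega^*)=\lVert Du\rVert(\Omega)+\lVert Dv\rVert(\Omega^*\setminus\overline{\Omega})+\int_{\partial\Omega}\int_{-\infty}^\infty \chi_{I_t}(x)\theta_{\{w>t\}}(x)\,dt\,d\mathcal H(x).
\]
Since $\alpha\le\theta_{\{w>t\}}\le c_d$, the inner integral is comparable to $|T_\Omega u(x)-T_{\Omega^*\setminus\overline{\Omega}}v(x)|$, which already yields the ``if and only if'' characterization of $w\in\BV(\Omega^*)$ from the assumption $u,v\in\BV(\Omega^*)$. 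For the equality in a local space, for $x\in I_t$ both $x\in\partial^*\{w>t\}$ and $x\in\partial^*\Omega$, so Lemma \ref{lem:consequence of locality} (via the locality condition) lets me replace $\theta_{\{w>t\}}(x)$ with $\theta_\Omega(x)$, and the inner integral collapses to $|T_\Omega u(x)-T_{\Omega^*\setminus\overline{\Omega}}v(x)|\theta_\Omega(x)$, giving the claimed identity.

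I expect the main obstacle to be the identification of $\chi_{\{w>t\}}^\wedge$ and $\chi_{\{w>t\}}^\vee$ with the boundary traces of $u$ and $v$ on $\partial\Omega$: verifying that the weak measure density conditions on both $\Omega$ and $\Omega^*\setminus\Omega$ let us apply Proposition \ref{prop:prop of trace}(v), and carefully checking that the trace $T_{X\setminus\Omega}w$ coincides $\mathcal H$-a.e.\ on $\partial\Omega$ with $T_{\Omega^*\setminus\overline{\Omega}}v$, require handling the possible $\mu$-mass of $\partial\Omega$ (which is $\mu$-null since $\mathcal H(\partial\Omega)<\infty$) and the fact that $\Omega^*$ is only a neighborhood of $\overline\Omega$.
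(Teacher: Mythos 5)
Your proposal is essentially correct, but it takes a genuinely different route from the paper's. The paper truncates $u,v$ to get $u_h,v_h\in\BV\cap L^\infty$, invokes a Leibniz-type rule to conclude $w_h\in\BV(\Omega^*)\cap L^\infty(\Omega^*)$, applies the decomposition \eqref{eq:decomposition} to $w_h$, and then passes to the limit $h\to\infty$ using lower semicontinuity for the ``if'' direction, while treating the ``only if'' direction by applying \eqref{eq:decomposition} to $w$ directly. Your coarea approach collapses both directions into a single identity by working with $\chi_{\{w>t\}}$ (automatically bounded, so no truncation is needed), and exploits the fact that the coarea formula \eqref{eq:coarea} is valid with both sides possibly infinite. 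This is arguably more streamlined, and it also yields the local-space formula directly rather than as a separate step.

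One step does need to be filled in: to apply \eqref{eq:decomposition} to $\chi_{\{w>t\}}$ in $\Omega^*$, you need $\chi_{\{w>t\}}\in\BV(\Omega^*)$, i.e.\ $P(\{w>t\},\Omega^*)<\infty$, and this cannot simply be read off from the (potentially infinite) $\|Dw\|(\Omega^*)$. However, it does hold for a.e.\ $t$ independently of whether $w\in\BV(\Omega^*)$: up to a $\mu$-null set, $\{w>t\}\cap\Omega^*=(\{u>t\}\cap\Omega)\cup(\{v>t\}\setminus\Omega)$, and for a.e.\ $t$ both $\{u>t\}$ and $\{v>t\}$ have finite perimeter in $\Omega^*$ by the coarea formula applied to $u,v\in\BV(\Omega^*)$, while $\Omega$ has finite perimeter by hypothesis; intersections and finite unions of finite-perimeter sets are again of finite perimeter (\cite[Proposition~4.7]{M}). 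You should state this explicitly before invoking \eqref{eq:decomposition}. Two smaller points: your parenthetical ``$\sigma$-finite because $\Omega$ is of finite perimeter and $\theta_\Omega\ge\alpha$'' establishes $\mathcal H(\partial^*\Omega)<\infty$, but to get $\mathcal H(\partial\Omega)<\infty$ you must also use the weak measure density conditions to see $\mathcal H(\partial\Omega\setminus\partial^*\Omega)=0$; and the phrase ``precisely when $t$ lies strictly between'' is not literally an if-and-only-if at the endpoints $t=w^\wedge(x)$ or $t=w^\vee(x)$, though this set of $(x,t)$ is $\mathcal H\times\mathcal L^1$-null and hence harmless.
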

\begin{proof}
First note that by the weak measure density conditions, we have $\mathcal H(\partial\Omega\setminus\partial^*\Omega)=0$, and thus $\mathcal H(\partial\Omega)<\infty$. This further implies that $\mu(\partial\Omega)=0$ \cite[Lemma 6.1]{KKST12}, and by this and the weak measure density conditions again, 
\[
\mathcal H(\partial\Omega\setminus\partial\overline{\Omega})=0
\quad\text{and}\quad T_{\Omega^*\setminus \overline{\Omega}}=T_{\Omega^*\setminus \Omega}.
\]

To prove one direction, let us assume \eqref{eq:trace integrability}. In particular, we assume that $T_{\Omega}u(x)$ and $T_{\Omega^*\setminus \overline{\Omega}}\,v(x)$ exist for $\mathcal H$-almost every $x\in\partial\Omega$. For $h>0$, define the truncated functions
\[
u_h=\min\{h,\max\{u,-h\}\}
\qquad\text{and}\qquad v_h=\min\{h,\max\{v,-h\}\}.
\]
Clearly $u_h,v_h,\chi_{\Omega},\chi_{\Omega^*\setminus \Omega}\in\BV(\Omega^*)\cap L^{\infty}(\Omega^*)$. Then
\[
w_h=u_h\chi_{\Omega}+v_h\chi_{\Omega^*\setminus \Omega}\in\BV(\Omega^*)\cap L^{\infty}(\Omega^*),
\]
see e.g. \cite[Proposition 4.2]{KKST}.
Based on the decomposition of the variation measure given in \eqref{eq:decomposition},
\begin{equation}\label{eq:gluing estimate}
\begin{split}
&\Vert Dw_h\Vert (\Omega^*)\\
&=\Vert Du_h\Vert(\Omega)+\Vert Dv_h\Vert(\Omega^*\setminus\overline{\Omega})+\int_{\partial \Omega}\int_{w_h^{\wedge}(x)}^{w_h^{\vee}(x)}\theta_{\{w_h>t\}}(x)\,dt\,d\mathcal H(x)\\
                       &\le \Vert Du\Vert(\Omega)+\Vert Dv\Vert(\Omega^*\setminus\overline{\Omega})+\int_{\partial \Omega}c_d|w_h^{\vee}(x)-w_h^{\wedge}(x)|\,d\mathcal H(x).
\end{split}
\end{equation}
By Proposition \ref{prop:prop of trace} $(iv)$, the boundary traces $T_{\Omega}$ of $u$, $u_h$, $w_h$, and $T_{\Omega^*\setminus\overline{\Omega}}$ of $v$, $v_h$, $w_h$, exist $\mathcal H$-almost everywhere on the boundary $\partial \Omega$. For $w_h$ this fact follows from the definition of boundary traces, by which we have that $T_{\Omega}w_h=T_{\Omega}u_h$, and similarly $T_{\Omega^*\setminus\overline{\Omega}}\,w_h=T_{\Omega^*\setminus\overline{\Omega}}\,v_h$. Proposition \ref{prop:prop of trace} $(v)$ now gives
\begin{equation}\label{eq:gluing traces}
\left\{w_h^{\wedge}(x), w_h^{\vee}(x)\right\} =\{T_{\Omega}w_h(x), T_{\Omega^*\setminus\overline{\Omega}}\,w_h(x)\}=\{T_{\Omega}u_h(x), T_{\Omega^*\setminus\overline{\Omega}}\,v_h(x)\}
\end{equation}
for $\mathcal H$-almost every $x\in \partial \Omega$.
Using Proposition \ref{prop:prop of trace} $(iv)$ again, for $\mathcal H$-almost every~$x\in\partial\Omega$ we have
\begin{equation}\label{eq:truncated traces}
\begin{split}
&T_{\Omega}u_h(x)=\min\{h,\max\{T_{\Omega}u(x),-h\}\},\\
&T_{\Omega^*\setminus\overline{\Omega}}\,v_h(x)=\min\{h,\max\{T_{\Omega^*\setminus\overline{\Omega}}\,v(x),-h\}\}.
\end{split}
\end{equation}
By the lower semicontinuity of the total variation as well as \eqref{eq:gluing estimate}, \eqref{eq:gluing traces} and \eqref{eq:truncated traces}, we now get
\begin{align*}
\Vert &Dw\Vert(\Omega^*)\le \liminf_{h\to \infty}\Vert Dw_h\Vert(\Omega^*)\\
                      &\le \Vert Du\Vert(\Omega)+\Vert Dv\Vert(\Omega^*\setminus\overline{\Omega})+\liminf_{h\to \infty}c_d\int_{\partial \Omega}|T_{\Omega}u_h-T_{\Omega^*\setminus\overline{\Omega}}\,v_h| \,d\mathcal H\\
                     &= \Vert Du\Vert(\Omega)+\Vert Dv\Vert(\Omega^*\setminus\overline{\Omega})+c_d\int_{\partial \Omega}|T_{\Omega}u-T_{\Omega^*\setminus\overline{\Omega}}\,v|\,d\mathcal H
                     <\infty.
\end{align*}
Thus $w\in\BV(\Omega^*)$.

To prove the converse, assume that $w\in \BV(\Omega^*)$. Here we can simply again write the decomposition of the variation measure
\[
\infty >\Vert Dw\Vert(\Omega^*)\ge \Vert Du\Vert(\Omega)+\Vert Dv\Vert(\Omega^*\setminus\overline{\Omega})+\alpha\int_{\partial \Omega}|w^{\vee}-w^{\wedge}|\,d\mathcal H,
\]
where $\alpha=\alpha(c_d,c_P)>0$, and just as earlier, note that
\begin{equation}\label{eq:approximate limits and traces}
|w^{\vee}(x)-w^{\wedge}(x)|=|T_{\Omega}w(x)-T_{\Omega^*\setminus \overline{\Omega}}\,w(x)|=|T_{\Omega}u(x)-T_{\Omega^*\setminus \overline{\Omega}}\,v(x)|
\end{equation}
for $\mathcal H$-almost every $x\in \partial \Omega$. This combined with the previous estimate gives the desired result. If $X$ is a local space, we combine the decomposition of the variation measure \eqref{eq:decomposition}, Lemma \ref{lem:consequence of locality}, and \eqref{eq:approximate limits and traces} to obtain the last claim.
\end{proof}

Next we show that if a set $A$ (which could be e.g. the boundary $\partial \Omega$) is in a suitable sense of codimension one, traces of $\BV$ functions are indeed integrable on $A$.
Let us first recall the following fact from the theory of sets of finite perimeter. Given any set of finite perimeter $E\subset X$, for $\mathcal H$-almost every $x\in \partial^*E$ we have
\begin{equation}\label{eq:density of E}
\gamma \le \liminf_{r\to 0} \frac{\mu(E\cap B(x,r))}{\mu(B(x,r))} \le \limsup_{r\to 0} \frac{\mu(E\cap B(x,r))}{\mu(B(x,r))}\le 1-\gamma,
\end{equation}
where $\gamma \in (0,1/2]$ only depends on the doubling constant and the constants in the Poincar\'e inequality \cite[Theorem 5.4]{A2}.

\begin{proposition}\label{prop:codimension one boundary}
Let $\Omega^*\subset X$ be open, let $u\in \BV(\Omega^*)$, and let $A\subset \Omega^*$ be a bounded Borel set that satisfies $\dist(A,X\setminus \Omega^*)>0$ and
\begin{equation}\label{eq:codimension one condition}
\mathcal H(A\cap B(x,r))\le c_A\frac{\mu(B(x,r))}{r}
\end{equation}
for every $x\in A$ and $r\in (0,R]$, where $R\in(0,\dist(A,X\setminus \Omega^*))$ and $c_A>0$ are constants. Then
\begin{equation}\label{eq:summability of traces}
\int_{A}(|u^{\wedge}|+|u^{\vee}|)\,d\mathcal{H}
 \le C\Vert u\Vert_{\BV(\Omega^*)},
\end{equation}
where $C= C(c_d,c_P,\lambda,A,R,c_A)$.
\end{proposition}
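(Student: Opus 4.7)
My plan is to decompose the trace integral into a jump contribution and a Lebesgue contribution, control the jump part directly from the decomposition formula \eqref{eq:decomposition}, and attack the Lebesgue part via coarea combined with a single-scale covering of $A$. By splitting $u=u_+-u_-$ with $\|Du_\pm\|(\Omega^*)\le\|Du\|(\Omega^*)$ I may assume $u\ge 0$, in which case $u^\wedge\le u^\vee$ and it suffices to bound $\int_A u^\vee\,d\mathcal H$. The boundedness of $A$ together with \eqref{eq:codimension one condition} implies $\mathcal H(A)<\infty$ (cover $A$ by finitely many balls $B(x_i,R)$ with $x_i\in A$), so $A$ is $\sigma$-finite with respect to $\mathcal H$. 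Applying \eqref{eq:decomposition} and using $\theta\ge\alpha$ gives
\[
\int_A(u^\vee-u^\wedge)\,d\mathcal H\le\frac{1}{\alpha}\|Du\|(\Omega^*),
\]
which handles the jump part.

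Next I would use layer cake together with the inclusion $\{u^\vee>t\}\subset E_t^1\cup\partial^*E_t$ modulo $\mathcal H$-null sets, where $E_t=\{u>t\}$, to obtain
\[
\int_A u^\vee\,d\mathcal H\le\int_0^\infty\mathcal H(A\cap\partial^*E_t)\,dt+\int_0^\infty\mathcal H(A\cap E_t^1)\,dt.
\]
The first summand is immediate: \eqref{eq:def of theta} gives $\mathcal H(A\cap\partial^*E_t)\le P(E_t,\Omega^*)/\alpha$, and its $t$-integral is at most $\|Du\|(\Omega^*)/\alpha$ by the coarea formula \eqref{eq:coarea}.

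For the second summand I would fix $r_0$ with $5\lambda r_0<\min(R,\dist(A,X\setminus\Omega^*))$ and take a maximal $r_0$-separated net $\{x_i\}\subset A$, so that the balls $2B_i=B(x_i,2r_0)$ cover $A$ with overlap depending only on $c_d$. On each $2B_i$ the relative isoperimetric inequality \eqref{eq:isop ineq} produces a dichotomy: if $\mu(E_t\cap 2B_i)\ge\frac12\mu(2B_i)$, then \eqref{eq:codimension one condition} directly yields $\mathcal H(A\cap 2B_i\cap E_t^1)\le C\mu(E_t\cap 2B_i)/r_0$; otherwise a stopping-time sub-cover of $A\cap 2B_i\cap E_t^1$ at the scales where the density of $E_t$ transitions through $\frac12$, combined with \eqref{eq:isop ineq} applied at those scales and \eqref{eq:codimension one condition}, bounds the contribution by a multiple of $P(E_t,\lambda'B_i)$. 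Summing over $i$ with bounded overlap and integrating over $t$ via coarea together with $\int_0^\infty\mu(E_t)\,dt=\|u\|_{L^1(\Omega^*)}$ yields
\[
\int_0^\infty\mathcal H(A\cap E_t^1)\,dt\le\frac{C}{r_0}\|u\|_{L^1(\Omega^*)}+C\|Du\|(\Omega^*),
\]
from which \eqref{eq:summability of traces} follows.

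The hard part will be the low-density subcase, since the density-transition sub-cover produces balls of varying radii and the bounded overlap of their $\lambda$-dilations is not automatic in a doubling metric space. I expect to handle this by a dyadic stratification of the transition scales within each $2B_i$: on each dyadic layer the sub-balls have radii comparable up to a factor of two, so the corresponding $\lambda$-dilations have bounded overlap by doubling, and since all sub-balls are confined to $2B_i$, only finitely many dyadic layers contribute nontrivially once one stops at a scale determined by $R$ and $\dist(A,X\setminus\Omega^*)$. The resulting logarithmic factor is then absorbed into the final constant $C=C(c_d,c_P,\lambda,A,R,c_A)$, yielding the claimed estimate.
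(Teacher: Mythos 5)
Your decomposition of $\int_A u^\vee\,d\mathcal H$ into a $\partial^*E_t$ piece (handled by coarea and $\theta\ge\alpha$) plus an $E_t^1$ piece, together with the separate jump estimate from \eqref{eq:decomposition}, is a genuinely different route from the paper's, which treats jump and density-one points uniformly through the set $E^\gamma$ of points where $E$ has lower density at least $\gamma$ and a single covering estimate preceded by a Chebyshev step. The jump part and the $\partial^*E_t$ term are sound, but the covering argument you sketch for $\int_0^\infty\mathcal H(A\cap E_t^1)\,dt$ has two concrete gaps, both in the low-density sub-case.

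First, $\mu(E_t\cap 2B_i)<\tfrac12\mu(2B_i)$ does \emph{not} produce a transition scale for a point $x\in 2B_i\cap E_t^1$: for $r$ near $r_0$ the ball $B(x,r)$ escapes $2B_i$ entirely, and even restricting to $x\in B_i$ with $B(x,r_0)\subset 2B_i$, the density of $E_t$ in $B(x,r_0)$ is only bounded by $\tfrac12 c_d^2\ge\tfrac12\ge\gamma$, which is not small enough to start a stopping time at level $\gamma/2$. One needs density below a constant comparable to $\gamma$ at a fixed top scale, for every $x$ in the set being covered; the paper secures this globally via Chebyshev, $\mu(\{u>t/2\})<\delta$ for $t>t_0\sim\|u\|_{L^1(\Omega^*)}$, which forces $\mu(E\cap B(x,R))/\mu(B(x,R))<\delta/c$ uniformly in $x\in A$, while the range $t\le t_0$ is dispatched trivially by $t_0\,\mathcal H(A)$. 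Second, your dyadic-stratification fix for the overlapping $\lambda$-dilations does not close: the transition radii of different points tend to $0$, so the number of dyadic layers is infinite and the ``logarithmic factor'' is unbounded. In fact no overlap control is needed: apply the relative isoperimetric inequality \eqref{eq:isop ineq} at radius $r/\lambda$, yielding $\mu(B(x,r/\lambda))/(r/\lambda)\le C\,P(E_t,B(x,r))$ with the perimeter living on the \emph{disjoint} ball $B(x,r)$ supplied by the $5$-covering lemma, so that $\sum_i P(E_t,B(x_i,r_i))\le P(E_t,\Omega^*)$ directly. This is precisely the device the paper uses.
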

\begin{proof}
We may assume that $u\ge 0$. Let
\[
c=\inf_{x\in A}\mu(B(x,R));
\]
by the doubling property of $\mu$ we have $c=c(A,R,c_d)>0$. First consider a set $E\subset X$ that is of finite perimeter in $\Omega^*$ and satisfies $\mu(E)<\delta$, where $\delta>0$ is a constant that will be determined later. Define
\[
E^{\gamma}=\left\{x\in \Omega^*:\,\liminf_{r\to 0}\frac{\mu(E\cap B(x,r))}{\mu(B(x,r))}\ge \gamma\right\},
\]
where $\gamma=\gamma(c_d,c_P,\lambda)>0$ is the constant from \eqref{eq:density of E}. Pick any $x\in E^{\gamma}\cap A$. We note that
\[
\frac{\mu(E\cap B(x,R))}{\mu(B(x,R))}\le \frac{\mu(E)}{\mu(B(x,R))}< \frac{\delta}{c}.
\]
By choosing $\delta>0$ small enough, we have
\[
\frac{\mu(E\cap B(x,R/(5\lambda)))}{\mu(B(x,R/(5\lambda)))}\le \frac{\gamma}{2}.
\]
Thus we have $\delta=\delta(c_d,\lambda,c,\gamma)$, and consequently $\delta=\delta(c_d,c_P,\lambda,A,R)$. By the definition of $E^{\gamma}$, we can find a number $r\in (0,R/5]$ that satisfies
\[
\frac{\gamma}{2c_d}<\frac{\mu(E\cap B(x,r/\lambda))}{\mu(B(x,r/\lambda))}\le \frac{\gamma}{2}.
\]
This can be done by repeatedly halving the radius $R/5$ until the right-hand side of the above inequality does not hold, and picking the last radius for which it did hold. From the relative isoperimetric inequality \eqref{eq:isop ineq} we conclude that
\begin{equation}\label{eq:estimate for balls}
\frac{\mu(B(x,r/\lambda))}{r/\lambda}\le \frac{2c_d}{\gamma}\frac{\mu(E\cap B(x,r/\lambda))}{r/\lambda}\le \frac{C}{\gamma} P(E,B(x,r)).
\end{equation}
Using the radii chosen this way, we get a covering $\{B(x,r(x))\}_{x\in A\cap E^{\gamma}}$ of the set $A\cap E^{\gamma}$. By the 5-covering lemma, we can select a countable family of disjoint balls $\{B(x_i,r_i)\}_{i=1}^{\infty}$ such that the balls $B(x_i,5r_i)$ cover $A\cap E^{\gamma}$. By using \eqref{eq:codimension one condition} and \eqref{eq:estimate for balls}, we get
\begin{equation}\label{eq:estimate for E gamma}
\begin{split}
\mathcal H(E^{\gamma}\cap A) &\le \sum_{i=1}^{\infty}\mathcal H(E^{\gamma}\cap A\cap B(x_i,5r_i))\\
                                             &\le c_A\sum_{i=1}^{\infty}\frac{\mu(B(x_i,5r_i))}{5r_i}
                                             \le C\sum_{i=1}^{\infty}\frac{\mu(B(x_i,r_i/\lambda))}{r_i/\lambda}\\
                                             &\le C\sum_{i=1}^{\infty}P(E,B(x_i,r_i))
                                             \le CP(E,\Omega^*),
\end{split}
\end{equation}
where $C=(c_d,c_P,\lambda,c_A)$.

Then we consider the function $u$. Assume that $x\in A\cap S_u$ and $u^{\wedge}(x)+u^{\vee}(x)>t$, with $t>0$. By the definitions of the lower and upper approximate limits, we know that $x\in \partial^{*}\{u>s\}$ for all $s\in (u^{\wedge}(x),u^{\vee}(x))$. By the coarea formula \eqref{eq:coarea}, the sets $\{u>s\}$ are of finite perimeter in $\Omega^*$ for every $s\in T$, where $T$ is a countable dense subset of $\R$. Thus, outside a $\mathcal H$-negligible set, \eqref{eq:density of E} holds for every $x\in\partial^{*}\{u>s\}$ and $s\in T$. Assuming that $x$ is outside this $\mathcal H$-negligible set, we can find $s\in ((u^{\wedge}(x)+u^{\vee}(x))/2,u^{\vee}(x))\cap T$ and estimate
\[
\liminf_{r\to 0}\frac{\mu(\{u>t/2\}\cap B(x,r))}{\mu(B(x,r))}\ge \liminf_{r\to 0}\frac{\mu(\{u>s\}\cap B(x,r))}{\mu(B(x,r))}\ge \gamma,
\]
which means that $x\in \{u>t/2\}^{\gamma}$. By Chebyshev's inequality we get
\[
\mu(\{u>t/2\})\le \frac{\Vert u\Vert_{L^1(\Omega^*)}}{t/2}<\delta
\]
if $t>t_0$, where $t_0=C(c_d,c_P,\lambda,A,R)\Vert u\Vert_{L^1(\Omega^*)}$ due to the dependencies of $\delta$ given earlier.
By the coarea formula again, $\{u>t/2\}$ is of finite perimeter in $\Omega^*$ for a.e. $t\in\R$, and Cavalieri's principle and \eqref{eq:estimate for E gamma} then imply that
\begin{align*}
\int_{A\cap S_u}&(u^{\wedge}+u^{\vee})\,d\mathcal H =\int_{0}^{\infty}\mathcal H(\{x\in A\cap S_u:u^{\wedge}(x)+u^{\vee}(x)>t\})\,dt\\
                                              &\le\int_{0}^{\infty}\mathcal H(\{u>t/2\}^{\gamma}\cap A)\,dt\\
                                              &\le t_0\mathcal H(A)+\int_{t_0}^{\infty}C(c_d,c_P,\lambda,c_A)P(\{u>t/2\},\Omega^*)\,dt\\
                                              &\le C(c_d,c_P,\lambda,A,R)\Vert u\Vert_{L^1(\Omega^*)}\mathcal H(A)+C(c_d,c_P,\lambda,c_A)\Vert Du\Vert(\Omega^*).
\end{align*}
This gives the estimate for $A\cap S_u$. For $A\setminus S_u$, we simply note that if $x\in A\setminus S_u$ and $u^{\wedge}(x)=u^{\vee}(x)>t$, then the approximate limit of $u$ at $x$ is larger than $t$, which easily gives $x\in\{u>t\}^{\gamma}$, and then we can use Cavalieri's principle as above.
\end{proof}

Finally we get the desired representation for the minimization problem.

\begin{theorem}
Assume that $X$ is a local space, and let $\Omega\Subset \Omega^*$ be bounded open sets such that $\Omega$ and $\Omega^*\setminus\Omega$ satisfy the weak measure density condition, $\Omega$ is a strong $\BV$ extension domain, and $\partial \Omega$ satisfies the assumptions of Proposition \ref{prop:codimension one boundary}. Assume also that $h\in\BV(\Omega^*)$ and that the trace $T_{X\setminus\overline{\Omega}}h(x)$ exists for $\mathcal H$-almost every $x\in\partial\Omega$, which in particular is true if $\Omega^*\setminus\overline{\Omega}$ is also a strong $\BV$ extension domain. Then the minimization problem given in Definition \ref{def:minimization problem}, with boundary values $h$, can be reformulated as the minimization of the functional
\begin{equation}\label{eq:reformulation}
\mathcal F(u,\Omega)+f_{\infty}\int_{\partial \Omega}|T_{\Omega}u-T_{X\setminus \overline{\Omega}}h|\theta_{\Omega}\,d\mathcal H
\end{equation}
over all $u\in \BV(\Omega)$.
\end{theorem}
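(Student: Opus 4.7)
The plan is to build a one-to-one correspondence between $\BV_h(\Omega)$ and the admissible $u\in\BV(\Omega)$, and to reduce the equivalence of the two minimization problems to identifying the boundary contribution of the measure $\mathcal F(v,\cdot)$. Given $u\in\BV(\Omega)$ with $\int_{\partial\Omega}|T_\Omega u-T_{X\setminus\overline{\Omega}}h|\,d\mathcal H<\infty$, set $v = u\chi_\Omega + h\chi_{\Omega^*\setminus\Omega}$; by Proposition~\ref{prop:gluing}, $v\in\BV(\Omega^*)$, hence $v\in\BV_h(\Omega)$. Conversely, restricting any $v\in\BV_h(\Omega)$ to $\Omega$ yields such a $u$, with the integrability automatic by the same proposition. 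Since $f_\infty\ge m>0$, the penalized functional is $+\infty$ whenever the boundary integral diverges, so no generality is lost.

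When the penalized functional is finite, the basic estimate \eqref{eq:basic estimate for functional} gives $\mathcal F(v,\Omega^*)<\infty$, and Theorem~\ref{thm:measure_prop} makes $\mathcal F(v,\cdot)$ a Borel regular outer measure on $\Omega^*$. Decomposing the disjoint Borel partition $\Omega^* = \Omega \sqcup \partial\Omega \sqcup (\Omega^*\setminus\overline{\Omega})$, one obtains
\[
\mathcal F(v,\Omega^*) = \mathcal F(u,\Omega) + \mathcal F(v,\partial\Omega) + \mathcal F(h,\Omega^*\setminus\overline{\Omega}),
\]
since $v=u$ on $\Omega$ and $v=h$ on $\Omega^*\setminus\overline{\Omega}$; the last summand does not depend on $u$.

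The heart of the argument is identifying the middle term. The weak measure density conditions on $\Omega$ and $\Omega^*\setminus\Omega$ force $\mathcal H(\partial\Omega\setminus\partial^*\Omega)=0$ and hence $\mu(\partial\Omega)=0$ (as noted in the opening of the proof of Proposition~\ref{prop:gluing}). Applying Remark~\ref{rem:integral representation} to $A=\partial\Omega$ therefore collapses to
\[
\mathcal F(v,\partial\Omega) = f_\infty\|Dv\|^s(\partial\Omega) = f_\infty\|Dv\|(\partial\Omega),
\]
the absolutely continuous piece vanishing on a $\mu$-null set. The local-space version of the gluing formula in Proposition~\ref{prop:gluing}, combined with $\|Dv\|(\Omega)=\|Du\|(\Omega)$ and $\|Dv\|(\Omega^*\setminus\overline{\Omega})=\|Dh\|(\Omega^*\setminus\overline{\Omega})$, then identifies
\[
\|Dv\|(\partial\Omega) = \int_{\partial\Omega}|T_\Omega u-T_{X\setminus\overline{\Omega}}h|\theta_\Omega\,d\mathcal H,
\]
where one uses that $T_{\Omega^*\setminus\overline{\Omega}}h=T_{X\setminus\overline{\Omega}}h$ on $\partial\Omega$ because $\Omega\Subset\Omega^*$.

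Assembling the three pieces yields
\[
\mathcal F(v,\Omega^*) = \mathcal F(u,\Omega) + f_\infty\int_{\partial\Omega}|T_\Omega u-T_{X\setminus\overline{\Omega}}h|\theta_\Omega\,d\mathcal H + \mathcal F(h,\Omega^*\setminus\overline{\Omega}),
\]
and the equivalence of the two minimization problems follows since the last term is a constant. The main obstacle is the clean identification of $\mathcal F(v,\partial\Omega)$: both ingredients are essential, namely $\mu(\partial\Omega)=0$ (which eliminates the constant $C$ appearing in Remark~\ref{rem:integral representation} and leaves only the singular part) and the locality of $X$ (which forces the gluing formula to produce the specific weight $\theta_\Omega$ rather than a value depending on the level sets of $v$).
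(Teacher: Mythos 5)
Your argument follows the paper's route very closely: the same disjoint Borel decomposition $\Omega^*=\Omega\sqcup\partial\Omega\sqcup(\Omega^*\setminus\overline{\Omega})$, the same use of $\mu(\partial\Omega)=0$ to collapse $\mathcal F(v,\partial\Omega)$ to its singular part via Remark~\ref{rem:integral representation}, and the same use of locality to obtain the specific weight $\theta_\Omega$. Using the already-packaged local-space formula from Proposition~\ref{prop:gluing} in place of re-deriving it from \eqref{eq:decomposition}, Lemma~\ref{lem:consequence of locality} and Proposition~\ref{prop:prop of trace}$(v)$ is a perfectly legitimate shortcut, since that formula is exactly what the paper assembles by hand in \eqref{eq:reformulation calculation}.

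There is, however, a genuine gap in the forward direction of your correspondence. When you start from $u\in\BV(\Omega)$ and set $v=u\chi_\Omega+h\chi_{\Omega^*\setminus\Omega}$, Proposition~\ref{prop:gluing} does not apply: that proposition requires \emph{both} ingredients to be functions in $\BV(\Omega^*)$, and $u$ is only given in $\BV(\Omega)$. Its extension by zero to $\Omega^*$ need not belong to $\BV(\Omega^*)$. The missing step is precisely where the strong $\BV$ extension assumption on $\Omega$ enters: you must extend $u$ to $Eu\in\BV(\Omega^*)$, observe $T_\Omega Eu=T_\Omega u$ on $\partial\Omega$, and then apply Proposition~\ref{prop:gluing} to the pair $(Eu,h)$; since the resulting $w=(Eu)\chi_\Omega+h\chi_{\Omega^*\setminus\Omega}$ agrees $\mu$-a.e.\ with $u\chi_\Omega+h\chi_{\Omega^*\setminus\Omega}$, you then conclude $v\in\BV_h(\Omega)$. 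Relatedly, the hypotheses of Proposition~\ref{prop:codimension one boundary} on $\partial\Omega$ are invoked in the paper not only to guarantee $\mathcal H(\partial\Omega)<\infty$ (hence $\mu(\partial\Omega)=0$ and that $\Omega$ has finite perimeter, both of which you also need for Proposition~\ref{prop:gluing}), but also to show that the boundary integral $\int_{\partial\Omega}|T_\Omega Eu-T_{X\setminus\overline{\Omega}}h|\,d\mathcal H$ is in fact \emph{always} finite for $u\in\BV(\Omega)$; your workaround of discarding the $u$'s with infinite penalty is valid, but it leaves unmentioned the cleaner fact that no such $u$ exists.
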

Note that this formulation contains no reference to $\Omega^*$.
\begin{proof}
First note that due to the conditions of Proposition \ref{prop:codimension one boundary}, we have $\mathcal H(\partial \Omega)<\infty$, and thus $\mu(\partial\Omega)=0$ and $\Omega$ is a set of finite perimeter, see e.g. \cite[Lemma 6.1, Proposition 6.3]{KKST12}.
By the weak measure density conditions, 
\[
\mathcal H(\partial\Omega\setminus\partial\overline{\Omega})=0
\quad\text{and}\quad T_{\Omega^*\setminus \overline{\Omega}}=T_{\Omega^*\setminus \Omega}.
\]
Now, for any $u\in \BV_h(\Omega)$, we have $u\in\BV(\Omega^*)$ by definition, and $\mathcal F(u,\Omega^*)<\infty$ by \eqref{eq:basic estimate for functional}. Then
\begin{equation}\label{eq:reformulation calculation}
\begin{split}
\mF&(u,\Omega^*)\\
&= \mF(u,\Omega)+\mF^s(u,\partial \Omega)+\mF(h,\Omega^*\setminus \overline{\Omega})\\
               &= \mF(u,\Omega)+f_{\infty}\Vert Du\Vert^s(\partial \Omega)+\mF(h,\Omega^*\setminus \overline{\Omega})\\
               &= \mF(u,\Omega)+f_{\infty}\int_{\partial \Omega}|u^{\vee}-u^{\wedge}|\theta_{\Omega}\,d\mathcal H+\mF(h,\Omega^*\setminus \overline{\Omega})\\
               &= \mF(u,\Omega)+f_{\infty}\int_{\partial \Omega}|T_{\Omega}u-T_{X\setminus \overline{\Omega}}h|\theta_{\Omega}\,d\mathcal H+\mF(h,\Omega^*\setminus \overline{\Omega}),
\end{split}
\end{equation}
where the first equality follows from the measure property of $\mathcal F(u,\cdot)$ as well as the fact that $\mu(\partial \Omega)=0$, the second equality follows from the integral representation of the functional (see Remark \ref{rem:integral representation}), the third equality follows from the decomposition \eqref{eq:decomposition} and Lemma \ref{lem:consequence of locality}, and the fourth equality follows from Proposition \ref{prop:prop of trace} $(v)$. Now, the term $\mF(h,\Omega^*\setminus \overline{\Omega})$ does not depend on $u$, so in fact we need to minimize \eqref{eq:reformulation}.

Conversely, assume that $u\in \BV(\Omega)$. Then we can extend $u$ to $Eu\in\BV(\Omega^*)$. By Proposition \ref{prop:prop of trace} $(v)$ we have 
\[
\{T_{\Omega}h(x),T_{X\setminus\overline{\Omega}}\,h(x)\}=\{h^{\wedge}(x),h^{\vee}(x)\}
\] 
for $\mathcal H$-almost every $x\in\partial\Omega$. By the proof of Theorem \ref{thm:boundary traces} we have that $T_{\Omega}Eu(x)$ is the Lebesgue limit of $Eu$ for $\mathcal H$-almost every $x\in\partial\Omega$. By Proposition \ref{prop:codimension one boundary}, we now get
\[
\int_{\partial \Omega}|T_{\Omega}Eu-T_{X\setminus \overline{\Omega}}h|\,d\mathcal H\le C(\Vert Eu\Vert_{\BV(\Omega^*)}+\Vert h\Vert_{\BV(\Omega^*)})<\infty.
\]
By Proposition \ref{prop:gluing} we deduce that $w=(Eu)\chi_{\Omega}+h\chi_{\Omega^*\setminus\Omega}\in\BV(\Omega^*)$, and in fact we have $w=u\chi_{\Omega}+h\chi_{\Omega^*\setminus\Omega}\in\BV_{h}(\Omega)$. This completes the proof.
\end{proof}

\begin{remark}
Note that in the latter part of the above proof we showed that, under the assumptions on the space and on $\Omega$, the spaces $\BV(\Omega)$ and $\BV_h(\Omega)\subset \BV(\Omega^*)$ can be identified.
\end{remark}

\noindent Addresses:\\

\noindent H.H.:  Department of Mathematical Sciences, P.O. Box 3000, FI-90014 University of Oulu, Finland. \\
\noindent E-mail: {\tt heikki.hakkarainen@oulu.fi}\\

\smallskip
\noindent J.K.: Aalto University, School of Science, Department of Mathematics, P.O. Box 11100, FI-00076 Aalto, Finland. \\
\noindent E-mail:  {\tt juha.k.kinnunen@aalto.fi}\\

\smallskip
\noindent P.L.: Aalto University, School of Science, Department of Mathematics, P.O. Box 11100, FI-00076 Aalto, Finland. \\
\noindent E-mail: {\tt panu.lahti@aalto.fi}\\

\smallskip
\noindent P.L.: Aalto University, School of Science, Department of Mathematics, P.O. Box 11100, FI-00076 Aalto, Finland. \\
\noindent E-mail: {\tt pekka.lehtela@aalto.fi}\\

\end{document}